\let\hbarorig\hbar
\author{
  \textbf{Chiara Esposito}\thanks{\texttt{chesposito@unisa.it}},\\[0.3cm]
   Dipartimento di Matematica\\
   Università degli Studi di Salerno\\
   via Giovanni Paolo II, 123\\
   84084 Fisciano (SA)\\
   Italy \\[0.5cm]
  \textbf{Philipp Schmitt}\thanks{\texttt{philippschmitt@math.ku.dk}},\\[0.3cm]
  Department of Mathematical Sciences\\
  University of Copenhagen\\
  Universitetsparken 5\\
  DK-2100 København \O{}\\ 
  Denmark\\[0.5cm]
  \textbf{Stefan Waldmann}\thanks{\texttt{stefan.waldmann@mathematik.uni-wuerzburg.de}},\\[0.3cm]
  Institut für Mathematik\\
  Lehrstuhl für Mathematik X\\
  Universität Würzburg\\
  Campus Hubland Nord\\
  Emil-Fischer-Straße 31\\
  97074 Würzburg\\
  Germany
}
\renewcommand{\mathbb}[1]{\mathbbm{#1}}
\newcommand{\refitem}[1] {\textit{\ref{#1}.)}}
\numberwithin{equation}{section}
\renewcommand{\arraystretch}{1.2}
\let\originalleft\left
\let\originalright\right
\renewcommand{\left}{\mathopen{}\mathclose\bgroup\originalleft}
\renewcommand{\right}{\aftergroup\egroup\originalright}
\renewcommand{\cleardoublepage}{\clearpage\ifodd\c@page\else\vspace*{\fill}\thispagestyle{empty}\newpage\fi}
\newtheorem{lemma}{Lemma}[section]
\newaliascnt{proposition}{lemma}
\newtheorem{proposition}[proposition]{Proposition}
\newaliascnt{thm}{lemma}
\newtheorem{theorem}[thm]{Theorem}
\newaliascnt{corollary}{lemma}
\newtheorem{corollary}[corollary]{Corollary}
\newaliascnt{definition}{lemma}
\newtheorem{definition}[definition]{Definition}
\newaliascnt{claim}{lemma}
\newaliascnt{example}{lemma}
\newaliascnt{remark}{lemma}
\newtheorem{remark}[remark]{Remark}
\newtheorem{maintheorem}{Main Theorem}
\def\theorem@checkbold{}
\theoremstyle{nonumberplain}
\newtheorem{proof}{Proof}
\newenvironment{lemmalist}{\begin{compactenum}[\itshape i.)]}{\end{compactenum}}
\newenvironment{theoremlist}{\begin{compactenum}[\itshape i.)]}{\end{compactenum}}
\newenvironment{propositionlist}{\begin{compactenum}[\itshape i.)]}{\end{compactenum}}
\newenvironment{definitionlist}{\begin{compactenum}[\itshape i.)]}{\end{compactenum}}
\newcommand{\I}              {\mathrm{i}}
\newcommand{\E}              {\mathrm{e}}
\newcommand{\D}              {\mathop{}\!\mathrm{d}}
\newcommand{\Unit}           {\mathbb{1}}
\newcommand{\group}[1]        {\mathrm{#1}}
\newcommand{\algebra}[1]      {\mathscr{#1}}
\newcommand{\lie}[1]          {\mathfrak{#1}}
\DeclareMathOperator{\Pol}    {\mathrm{Pol}}
\DeclareMathOperator{\ad}     {\mathrm{ad}}
\DeclareMathOperator{\Ad}     {\mathrm{Ad}}
\newcommand{\Fun}[1][k]      {\mathscr{C}^{#1}}
\newcommand{\Cinfty}         {\Fun[\infty]}
\newcommand{\acts}            {\mathbin{\triangleright}}
\newcommand{\Sec}[1][k]      {\Gamma^{#1}}
\newcommand{\Secinfty}       {\Sec[\infty]}
\DeclareMathOperator{\Diffop}         {\mathrm{DiffOp}}
\newcommand{\argument}       {\,\cdot\,}
\newcommand{\at}[1]          {\big|_{#1}}
\newcommand{\At}[1]          {\Big|_{#1}}
\newcommand{\pr}             {\mathrm{pr}}
\newcommand{\tensor}[1][{}]           {\mathbin{\otimes_{\scriptscriptstyle{#1}}}}
\newcommand{\Lie}   {\mathscr{L}}
\DeclarePairedDelimiter{\abs}{\lvert}{\rvert}
\DeclarePairedDelimiter{\norm}{\lVert}{\rVert}
\newcommand{\Sym}                     {\mathrm{S}}
\DeclareMathOperator{\diag}  {\mathrm{diag}}
\newcommand{\ev}             {\mathrm{ev}}
\theoremstyle{empty}
\newtheorem{proofof}{}
\mathchardef\mhyphen="2D
\newcommand{\anfa}{``}
\newcommand{\anfel}{''\ }
\newcommand{\glkomma}{\, ,}
\newcommand{\glpunkt}{\, .}
\newcommand{\textueber}[2]{\overset{\mathclap{\strut{#2}}}#1}
\newcommand{\toFunction}[1]{#1^\flat}
\newcommand{\toFunctionPower}[2]{(#1^\flat)^{#2}}
\newcommand{\formParam}{z}
\newcommand{\univ}{\algebra U}
\newcommand{\univdeg}[1]{\algebra U_{#1}}
\newcommand{\SLN}[1]{\group{SL}(#1,\mathbb C)}
\newcommand{\sln}[1]{\lie{sl}_{#1}(\mathbb C)}
\newcommand{\SUN}[1]{\group{SU}(#1)}
\newcommand{\sunc}[1]{\lie{su}_{#1}}
\newcommand{\UN}[1]{\group{U}(#1)}
\newcommand{\SLC}{\SLN 2}
\newcommand{\slc}{\sln 2}
\newcommand{\SU}{\SUN 2}
\newcommand{\su}{\sunc 2}
\newcommand{\U}{\UN 1}
\newcommand{\numberofbars}{\overline n}
\newcommand{\numberofnonbars}{n}
\newcommand{\numberofbarsext}{\hat{\numberofbars}}
\newcommand{\numberofnonbarsext}{\hat{\numberofnonbars}}
\newcommand{\polefamily}{P}
\newcommand{\poleset}{\Omega}
\newcommand{\prodP}{*_\hbar^{\algebra P}}
\newcommand{\prodR}{*_\hbar^{\algebra R}}
\newcommand{\orbsign}{\mathcal{O}}
\newcommand{\orb}[1]{\orbsign_{#1}}
\newcommand{\coorbsign}{\Omega}
\newcommand{\coorb}[1]{\coorbsign_{#1}}
\newcommand{\vanideal}{\algebra I_{\mathrm{van}}}
\newcommand{\Polcompleteds}[2]{\smash{\widehat{\Pol}}_{#1}(#2)}
\newcommand{\leftinv}[1]{{#1}^{\mathrm{left}}} 
\newcommand{\withhbar}[1]{#1^{(\hbar)}}
\newcommand{\opl}{\ell}
\newcommand{\oplh}{\withhbar{\opl}}
\newcommand{\oplhs}{\smash{\oplh}}
\newcommand{\opr}{r}
\newcommand{\oprh}{\withhbar{\opr}}
\newcommand {\Opl}{L}
\newcommand {\Oplh}{\withhbar{\Opl}}
\newcommand{\Oplhs}{\smash{\Oplh}}
\newcommand {\Opr}{R}
\newcommand {\Oprh}{\withhbar{\Opr}}
\newcommand{\Oplhlift}{\withhbar{\hat L}}
\newcommand{\Oplhslift}{\smash{\Oplhlift}}
\newcommand{\defspace}{\algebra A_\hbar}
\newcommand{\quotientTR}[1][R]{quotient-$T_{#1}$-topology}
\newcommand{\reductionTR}[1][R]{reduction-$T_{#1}$-topology}
\newcommand{\quotientTRs}[1][R]{{\quotientTR[#1]} }
\newcommand{\alekseevN}[1]{\lie n^{#1}}
\newcommand{\mynorm}{\lie p}
\newcommand{\cpn}[1]{\mathbb{CP}^{#1}}
\newcommand{\evaluate}[2]{\langle{#1},{#2} \rangle}
\newcommand{\evaluatebig}[2]{\left\langle{#1},{#2}\right\rangle}
\newcommand{\finitesgroup}[1][2]{\algebra P_{\SUN{#1}}}
\newcommand{\finitesspace}[1][2]{\algebra P_{\mathbb C^{#1}}}
\newcommand{\finiteinvsgroup}[1][2]{\algebra R_{\SUN{#1}}}
\newcommand{\finiteinvsspace}[1][2]{\algebra R_{\mathbb C^{#1}}}
\newcommand{\partt}{\frac{\D}{\D t} \At{t=0}}
\newcommand{\parttsmall}{\frac{\D}{\D t} \at{t=0}}
\newcommand{\parttj}[1]{\frac{\D}{\D t_{#1}} \At{t_{#1}=0}}
\newlength{\hatchspread}
\newlength{\hatchthickness}
\newlength{\hatchshift}
\newcommand{\hatchcolor}{}
\tikzset{hatchspread/.code={\setlength{\hatchspread}{#1}},
	hatchthickness/.code={\setlength{\hatchthickness}{#1}},
	hatchshift/.code={\setlength{\hatchshift}{#1}},
	hatchcolor/.code={\renewcommand{\hatchcolor}{#1}}}
\tikzset{hatchspread=3pt,
	hatchthickness=0.4pt,
	hatchshift=0pt,
	hatchcolor=black}
\let\hbar\hbarorig
\title{Comparison and Continuity of Wick-type Star Products on certain coadjoint orbits}
\date{ }
\begin{document}

%
%

\maketitle

%
%

\begin{abstract}
  In this paper we discuss continuity properties of the Wick-type star product 
  on the 2-sphere, interpreted as a coadjoint orbit. Star products on 
  coadjoint orbits in general have been constructed by different techniques.
  We compare the constructions of Alekseev-Lachowska and Karabegov
  and we prove that they agree in general. In the case of the 2-sphere we 
  establish the continuity of the star product, thereby allowing for a  
  completion to a Fr\'echet algebra.
\end{abstract}

\newpage

%
%

\tableofcontents

%
%

\section{Introduction}
\label{sec:Introduction}

There are two main directions in the field of deformation quantization: 
\emph{formal} and \emph{strict} quantization. The first of these was introduced 
in \cite{bayen.et.al:1978a} and has a rather algebraic flavour. One 
considers an associative product for formal power series of smooth functions on 
a Poisson manifold as a deformation of the commutative product of such 
functions, in the direction of the Poisson bracket. Existence and 
classification of these formal quantizations on 
symplectic manifolds have been understood  
\cite{bertelson.cahen.gutt:1997a, dewilde.lecomte:1983b, fedosov:1994a, 
nest.tsygan:1995a} and the general case of Poisson manifolds was solved by 
Kontsevich's formality theorem \cite{kontsevich:2003a}. However, to be 
physically relevant formal deformation quantization lacks one important 
property: one cannot just plug in a real value representing Planck's constant 
due to convergence issues of the formal power series.

Strict deformation quantization \cite{natsume.nest.peter:2003a, 
natsume.nest:1999a, rieffel:1993a} attempts to overcome this problem by looking 
at fields of algebras defined for a suitable set of values for the deformation 
parameter. However, the situation in this context is much more unclear than in 
the formal setting and there are several competing definitions on what strict 
quantization should be. A prominent one is due to Rieffel 
\cite{rieffel:1993a}, who works with continuous fields of C*-algebras. In his 
work, he constructs such a field out of an isometric action of 
$\mathbb R^d$ on an arbitrary C*-algebra. Although this setting is fairly 
general, important symplectic manifolds like the 2-sphere with its 
$\group{SO}(3)$-invariant symplectic structure cannot be treated in this 
approach \cite{rieffel:1998a}. The method introduced by Rieffel, also used by 
Bieliavsky and Gayral in \cite{bieliavsky.gayral} for the case of negatively 
curved K\"ahlerian Lie groups, relies on oscillatory integrals, which makes 
this approach rather technical and impossible to generalize to infinite 
dimensional settings (as needed when 
studying quantum field theories), see also the recent work 
\cite{bieliavsky:riemannsurfaces}.

Another approach tries to introduce locally convex topologies 
\cite{waldmann:2014a, beiser.roemer.waldmann:2007a, beiser.waldmann:2014a} in 
which the 
formal power series converge on sufficiently large subalgebras. Several 
examples have been studied in this area, including also infinite dimensional 
ones like star products of exponential type in \cite{schoetz.waldmann:2018} and 
the Gutt star product on 
the dual of a Lie algebra in \cite{esposito.stapor.waldmann:2017a}. The 
hyperbolic disc was treated in \cite{shoetz}. 
However, the work so far is based on the study of specific examples 
and a more general approach to these questions has not yet been proposed.

In this work we attempt to lay the foundations to treat the  
class of \emph{coadjoint orbits} of (compact semisimple connected) Lie groups 
in this setting. Coadjoint orbits give examples of a big class of geometrically 
non-trivial symplectic manifolds, which under certain assumptions on the Lie 
group can be equipped with a complex structure. One of the first interesting 
examples is the 2-sphere with its $\group{SO}(3)$-invariant symplectic 
structure, for which many of the currently available methods in the field fail 
\cite{rieffel:1998a}. 
Coadjoint orbits have proven useful in many areas of mathematics and 
physics, e.g.~through their relation to unitary representations of Lie groups 
given by Kirillov's orbit method \cite{kirillov}.

The starting point for the present work are the constructions of star 
products of Wick type due to Karabegov \cite{karabegov:1998c, karabegov:1999a} 
and Alekseev-Lachowska \cite{alekseev.lachowska:2005a}. Earlier constructions 
of star products have been obtained in \cite{moreno.ortega-navarro:1983b, 
cahen.gutt.rawnsley:1990a, cahen.gutt.rawnsley:1993a, 
cahen.gutt.rawnsley:1994a, cahen.gutt.rawnsley:1995a,
bordemann.brischle.emmrich.waldmann:1996a, 
bordemann.brischle.emmrich.waldmann:1996b}.

Recall the following 
definition of star products of Wick type \cite{bordemann.waldmann:1997a}:
A formal star product on a K\"ahler manifold $M$ of the form 
\begin{equation} 
f * g = \sum_{i=0}^\infty \formParam^i C_i(f,g)
\end{equation}
where the $C_i$ are bidifferential 
operators on $M$, is said to be of \emph{Wick type} if when restricted to 
any local holomorphic coordinate system, the $C_i$ contain only holomorphic 
derivatives in the first variable and only antiholomorphic derivatives in the 
second one.
Similarly, a star product of \emph{anti-Wick type} contains only 
antiholomorphic derivatives in the first variable and holomorphic derivatives 
in the second one. Both are also known as star products with separation of 
variables \cite{karabegov:1996a}. Note that both Karabegov and 
Alekseev-Lachowska define 
products in the non-formal setting for certain functions with rational 
dependence on $\hbar$. Then they obtain a formal product by considering 
the pointwise asymptotic expansion around $\hbar = 0$. To the best of our 
knowledge, it 
only 
appeared in the literature that the formal expansions agree, see 
\cite{karabegov:1999a, calaque}, where the Karabegov class is calculated, and 
this class classifies star products of Wick type uniquely 
\cite{karabegov:1996a}. Our first result, 
\autoref{theo:alekseev:constructionsagree} says that the constructions agree on 
the nose:
\begin{maintheorem}\label{maintheo:i}
	Let a coadjoint orbit of a compact connected
	semisimple Lie group $K$ with a fixed complex structure be given. Then
	Ka\-ra\-be\-gov's star product $*_\hbar$ for the trivial Karabegov class
	agrees with the product $*'_\hbar$ defined by Alekseev-Lachowska 
	on the $K$-finite functions whenever $\hbar$ is different from the
	countably many poles.
\end{maintheorem}

The spirit of the constructions is, however, rather different. Karabegov 
deforms the Gutt star product \cite{gutt:1983a} on the dual of a Lie algebra in 
such a way 
that it becomes tangential to the coadjoint orbits and can actually be 
restricted. On the other hand Alekseev-Lachowska construct a formula that looks 
like a Drinfel'd twist. Note also that the construction of Alekseev-Lachowska 
still works for non-compact semisimple Lie groups as long as the orbit contains 
a semisimple element. However, we 
are only interested in the compact case in this paper.

Given a non-formal $K$-invariant Karabegov class with a rational dependence on 
$\hbar$, we use Ka\-ra\-be\-gov's construction to obtain a star product with 
rational 
dependence on $\hbar$ on the polynomials. The formal Karabegov class of this 
product is the formal expansion of the above chosen class.
At the special 
values of the poles, the product is still defined on a 
subspace of the polynomials, giving a possibly finite dimensional algebra. As 
already noticed by Karabegov \cite{karabegov:1999a}, for the example of the 
2-sphere these algebras are exactly the ones obtained via Berezin-Toeplitz 
quantization \cite{cahen.gutt.rawnsley:1990a, cahen.gutt.rawnsley:1993a, 
cahen.gutt.rawnsley:1994a, cahen.gutt.rawnsley:1995a}.

We then restrict to the special case of the 2-sphere and a generic $\hbar \in 
\mathbb C$, meaning that the star product is defined on the whole polynomial 
algebra. We attack the problem of enlarging these algebras by constructing a 
locally convex topology in which the product is continuous. Since the algebra 
of polynomials on the 2-sphere is isomorphic to a quotient of the symmetric 
algebra over a 3-dimensional vector space, we can look at the quotient topology 
induced by the $T_R$-topology used in e.g.~\cite{waldmann:2014a}, that we call 
the \emph{\quotientTR}.

\begin{maintheorem}\label{maintheo:ii}
	The star product $*_\hbar$ on the 2-sphere is continuous
	with respect to the \quotientTR\ for $R \geq 0$ 
	if $\frac 1 \hbar \notin \mathbb N$.
\end{maintheorem}
Working with the Alekseev-Lachowska construction it is even more natural to 
consider a topology defined in a slightly different way, that is however 
equivalent to the \quotientTR.

Having found such a topology, we 
complete the algebra to a Fr\'echet algebra. Note in particular that the 
\quotientTR\ is independent of $\hbar$, so the underlying topological vector 
space of the Fr\'echet algebras is the same. Surprisingly, the completed 
algebras are 
isomorphic to the algebras obtained in \cite{shoetz} for the hyperbolic disc.
However, their *-involutions differ. In a follow up paper we will 
investigate their relations in detail.

In future research, we would also like to extend our convergence results to 
more coadjoint orbits. The question of convergence relates to the coefficients 
occurring in the twist-like formula of Alekseev-Lachowska. These coefficients 
can be obtained in a purely Lie algebraic framework, so that we hope to answer 
questions about convergence of the star product by looking at properties of the 
Lie algebra.

This paper is partially based on the master thesis \cite{philipp:thesis}.
It is organized as follows. In \autoref{sec:preliminaries} we recall 
all the basic notions concerning coadjoint orbits and the construction of 
complex structures that are needed throughout 
the paper. 
Furthermore, we recall the constructions of star products of Wick type on 
coadjoint orbits of compact semisimple and connected Lie groups 
obtained by Alekseev-Lachowska and Karabegov, respectively.
\autoref{sec:alekseev:comparison} contains the comparison between the two 
constructions and we prove \autoref{maintheo:i}.
\autoref{sec:convergence} contains first convergence results for formal star 
products of Wick-type on coadjoint orbits. Here, we prove that the star product 
is defined on the subalgebra of polynomials on the orbit. 
Finally, \autoref{sec:continuity}
contains the main results of this paper. We discuss the particular case of the 
2-sphere interpreted as a coadjoint orbit of $\SU$. We introduce the 
\quotientTR\ and \reductionTR\ and prove that they are equivalent. Furthermore 
we 
prove \autoref{maintheo:ii}.

\subsubsection*{Notation}
We use $\Cinfty(M)$ for real valued smooth functions on a manifold $M$ and 
$\Cinfty_{\mathbb C}(M)$ for complex valued smooth functions. If $M$ is an 
immersed submanifold of a vector space $V$, we denote the restriction of 
complex valued polynomials on $V$ to $M$ by $\Pol(M)$. Moreover, $G$ and $K$ 
denote Lie 
groups with Lie algebras $\lie g$ and $\lie k$, respectively. We always 
use $\formParam$ to denote a formal parameter, $\hbar$ is a complex 
number.

\subsubsection*{Acknowledgements}
We would like thank Matthias Sch\"otz for valuable discussions. The second 
author is supported by the Danish National Research Foundation through the 
Centre for Symmetry 
and Deformation (DNRF92).

\section{Preliminaries}
\label{sec:preliminaries}

In this section we collect some known results on the construction of
star products on coadjoint orbits which will be used in the sequel.
For this purpose, we first recall some basic
tools concerning coadjoint orbits which can be found e.g.~in
\cite{marsden.ratiu:1999a}.

\subsection{Coadjoint orbits and complex structures}
\label{subsec:coadjointorbits}

Let $G$ be a real Lie group with Lie algebra $\lie g$ and denote by
$\coorb{\xi}$ the \emph{coadjoint orbit} of $G$ through $\xi \in
\lie g^*$ and by $\orb{\mu}$ the \emph{adjoint orbit} of $G$ through
$\mu \in \lie g$. Recall that $\coorb \xi$ has a smooth structure,
which makes $\coorb{\xi}$ an immersed submanifold of $\lie g^*$ and
an embedded submanifold if $G$ is compact.
The coadjoint orbits $\coorb{\xi}$ can be endowed with a
$G$-in\-variant symplectic form $\omega_{\coorb{\xi}}$, called the
\emph{Kirillov-Kostant-Souriau (KKS) form}.
\begin{lemma}
  \label{lemma:adjoint=coadjoint}
  Let $G$ be a connected semisimple Lie group. Then:
  \begin{lemmalist}
  \item The adjoint and coadjoint orbits are diffeomorphic via the
  Killing form $B$ of the Lie algebra $\lie g$ of $G$.
  \item Adjoint orbits of $G$ also carry a $G$-invariant symplectic
    form $\omega_{\orb{\mu}}$.
\end{lemmalist}
\end{lemma}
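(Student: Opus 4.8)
The plan is to use the non-degeneracy and invariance of the Killing form to produce a $G$-equivariant linear isomorphism $\lie g \to \lie g^*$ that carries adjoint orbits onto coadjoint orbits, and then to transport the KKS form back along this isomorphism.

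For the first item, semisimplicity of $G$ means, by Cartan's criterion, that the Killing form $B$ is non-degenerate, so the map $\toFunction{(\argument)} \colon \lie g \to \lie g^*$, $X \mapsto B(X, \argument)$, is a linear isomorphism. The key point is that it intertwines the adjoint action $\Ad$ on $\lie g$ with the coadjoint action on $\lie g^*$. This is a direct consequence of the $\Ad$-invariance $B(\Ad_g X, \Ad_g Y) = B(X, Y)$ of the Killing form: writing the coadjoint action as $g \cdot \xi = \xi \circ \Ad_{g^{-1}}$, one has for all $Y \in \lie g$
\begin{equation}
  \bigl(g \cdot \toFunction{X}\bigr)(Y) = B(X, \Ad_{g^{-1}} Y) = B(\Ad_g X, Y) = \toFunction{(\Ad_g X)}(Y).
\end{equation}
Hence $\toFunction{(\argument)}$ maps the adjoint orbit $\orb{\mu}$ bijectively onto the coadjoint orbit $\coorb{\toFunction\mu}$. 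Moreover the same equivariance shows that the stabilizers coincide, $G_\mu = \{g : \Ad_g\mu = \mu\} = \{g : g \cdot \toFunction\mu = \toFunction\mu\}$, so that both orbits are identified with the homogeneous space $G/G_\mu$ and $\toFunction{(\argument)}$ induces the identity map; in particular its restriction is a diffeomorphism $\orb{\mu} \to \coorb{\toFunction\mu}$, which proves the first claim.

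For the second item, I would pull back the KKS form. Writing $\Phi \colon \orb{\mu} \to \coorb{\toFunction\mu}$ for the diffeomorphism just obtained, I set $\omega_{\orb{\mu}} := \Phi^* \omega_{\coorb{\toFunction\mu}}$. As the pullback of a symplectic form along a diffeomorphism, $\omega_{\orb{\mu}}$ is again closed and non-degenerate, hence symplectic. Its $G$-invariance is then automatic: the already recorded $G$-invariance of the KKS form together with the equivariance $\Phi \circ L_g = \tilde L_g \circ \Phi$ (where $L_g$ and $\tilde L_g$ denote the $G$-actions on the two orbits) gives $L_g^*\omega_{\orb{\mu}} = (\tilde L_g \circ \Phi)^*\omega_{\coorb{\toFunction\mu}} = \Phi^*\omega_{\coorb{\toFunction\mu}} = \omega_{\orb{\mu}}$.

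There is no deep obstacle in this argument; it amounts to a careful bookkeeping of the two $G$-actions. The only steps deserving attention are the intertwining identity above — which is exactly the invariance of $B$ — and the observation that equivariance forces the stabilizers to agree, so that the restriction of the ambient linear isomorphism is genuinely a diffeomorphism of the homogeneous spaces rather than merely a continuous bijection of immersed submanifolds.
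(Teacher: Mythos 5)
Your argument is correct and complete: non-degeneracy of $B$ via Cartan's criterion, the intertwining identity coming from $\Ad$-invariance of $B$, the identification of both orbits with $G/G_\mu$ (so the restriction is a genuine diffeomorphism of immersed submanifolds), and the pullback of the KKS form are exactly the standard proof. The paper itself states this lemma without proof, recalling it from the literature (Marsden--Ratiu), so there is nothing to compare against; your write-up supplies precisely the expected argument.
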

In the following we briefly recall the construction of complex
structures on adjoint orbits of connected compact semisimple Lie
groups (see among others \cite{bordemann.forger.roemer:1986a} for
details, also in the non-compact case), which will be crucial in the
construction of star products.

Let us consider a connected, compact and semisimple Lie group $K$ and
let $\orb{\mu}$ be its adjoint orbit through $\mu \in \lie k$, which
is diffeomorphic to the quotient $K / K^\mu$ of $K$ by the stabilizer
of $\mu$.
Let $\lie k$ and $\lie k^\mu \subseteq \lie k$ be the Lie algebras of
$K$ and $K^\mu$, respectively, and denote their complexifications by
$\lie g$ and $\lie g^\mu$. Consider the orthogonal complement $\lie m$
of $\lie g^\mu$ under the Killing form $B$, for which we have $\lie m
\cong \lie g / \lie g^\mu$ as vector spaces.  Furthermore, let $\lie
t$ be a maximally commutative Lie subalgebra of $\lie k^\mu$
containing $\mu$. It is easy to see that $\lie t$ is also maximally
commutative in $\lie k$, hence its complexification $\lie h$ is a
Cartan subalgebra of $\lie g$ containing $\mu$.  Since $\mu$ commutes
with $\lie h$ and acts as multiplication by $\alpha(\mu)$ on a root
space $\lie g_\alpha$, the subalgebra $\lie g^\mu$ is a sum of $\lie
h$ and the root spaces $\lie g_\alpha$ for which $\alpha(\mu) = 0$.
Let us denote
\begin{equation}
  \label{eq:rootspaces}
  \Delta' 
  =
  \{ 
  \alpha\in \Delta \vert \alpha(\mu) = 0 
  \}
  \quad\text{and}\quad
    \smash{\hat\Delta}
  =
  \{ 
  \alpha \in \Delta \vert \alpha(\mu) \neq 0 
  \} \glpunkt
\end{equation}
In particular, we have
\begin{equation} \label{eq:preliminaries:complexstructure:kmuandm}
  \lie g^\mu = \lie h \oplus \bigoplus_{\alpha \in \Delta'} 
  \lie g_\alpha \quad\text{and}\quad
  \lie m = \bigoplus_{\alpha\in\hat{\Delta}} \lie g_{\alpha} \glpunkt
\end{equation}
Recall that a subset $\hat\Delta^+ \subseteq \hat{\Delta}$ is said to
define an \emph{invariant ordering} on $\hat\Delta$ if for
$\hat{\Delta}^- := - \hat{\Delta}^+$ it satisfies the following
identities:
\begin{gather}
    \hat{\Delta}^+ \cup \hat{\Delta}^- 
    = 
    \hat{\Delta} \quad\text{and}\quad
    \hat{\Delta}^+ \cap \hat{\Delta}^- 
    = 
    \emptyset \glkomma 
    \label{eq:preliminaries:complexstructure:invariantordering:i}\\
    \alpha,     \beta \in\hat{\Delta}^+ ,
    \alpha + \beta \in \hat{\Delta} 
    \implies \alpha+\beta\in\hat{\Delta}^+  
    \glkomma \label{eq:preliminaries:complexstructure:invariantordering:ii}\\
    \alpha 
    \in \hat{\Delta}^+, \gamma \in \Delta', \alpha + \gamma \in 
    \Delta 
    \implies \alpha + \gamma \in \hat{\Delta}^+  \glpunkt 
    \label{eq:preliminaries:complexstructure:invariantordering:iii}
\end{gather}
The following theorem can be found e.g. in
\cite{bordemann.forger.roemer:1986a}.
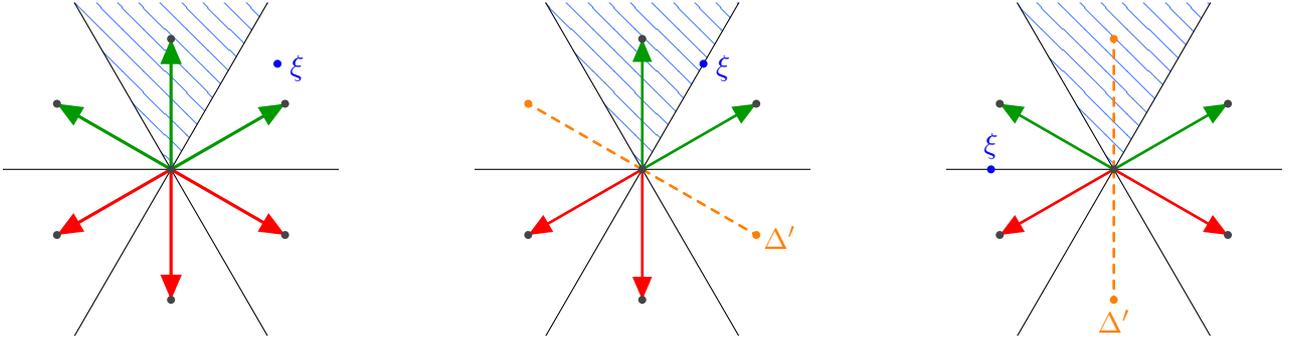
\begin{figure}
	\definecolor{qqzzqq}{rgb}{0,0.6,0}
	\definecolor{qqqqff}{rgb}{0,0,1}
	\definecolor{wwzzff}{rgb}{0.3,0.5,1}
	\definecolor{ffqqqq}{rgb}{1,0,0}
	\definecolor{uuuuuu}{rgb}{0.27,0.27,0.27}
	\begin{tikzpicture}[line cap=round,line join=round,>=triangle 
	45,x=1.0cm,y=1.0cm, scale=1.0]
	\clip(-2.2,-2.2) rectangle (2.2,2.2);
	\fill[pattern=custom north west lines,hatchcolor=wwzzff,hatchspread=8pt] 
	(-4.47,7.74) -- (0,0) -- 
	(4.42,7.66) -- cycle; 
	\draw [domain=-9.1:9.99] plot(\x,{(-0-0*\x)/1});
	\draw [domain=-9.1:9.99] plot(\x,{(-0-0.87*\x)/-0.5});
	\draw [domain=-9.1:9.99] plot(\x,{(-0--0.87*\x)/-0.5});
	\draw [->,line width=1.2pt,color=ffqqqq] (0,0) -- (1.5,-0.87);
	\draw [->,line width=1.2pt,color=ffqqqq] (0,0) -- (0,-1.73);
	\draw [->,line width=1.2pt,color=ffqqqq] (0,0) -- (-1.5,-0.87);
	\draw [->,line width=1.2pt,color=qqzzqq] (0,0) -- (1.5,0.87);
	\draw [->,line width=1.2pt,color=qqzzqq] (0,0) -- (-1.5,0.87);
	\draw [->,line width=1.2pt,color=qqzzqq] (0,0) -- (0,1.73);
	\fill [color=blue] (1.4,1.4) circle (1.5pt);
	\draw [color=blue](1.43,1.65) node[anchor=north west] {$\xi$};
	\begin{scriptsize}
	\fill [color=uuuuuu] (0,0) circle (1.5pt);
	\fill [color=uuuuuu] (0,-1.73) circle (1.5pt);
	\fill [color=uuuuuu] (-1.5,-0.87) circle (1.5pt);
	\fill [color=uuuuuu] (-1.5,0.87) circle (1.5pt);
	\fill [color=uuuuuu] (0,1.73) circle (1.5pt);
	\fill [color=uuuuuu] (1.5,0.87) circle (1.5pt);
	\fill [color=uuuuuu] (1.5,-0.87) circle (1.5pt);
	\end{scriptsize}
	\end{tikzpicture}
	\hfill
	\begin{tikzpicture}[line cap=round,line join=round,>=triangle 
	45,x=1.0cm,y=1.0cm, scale=1.0]
	\clip(-2.2,-2.2) rectangle (2.2,2.2);
	\fill[pattern=custom north west lines,hatchcolor=wwzzff,hatchspread=8pt] 
	(-4.47,7.74) -- (0,0) -- 
	(4.42,7.66) -- cycle; 
	\draw [domain=-9.1:9.99] plot(\x,{(-0-0*\x)/1});
	\draw [domain=-9.1:9.99] plot(\x,{(-0-0.87*\x)/-0.5});
	\draw [domain=-9.1:9.99] plot(\x,{(-0--0.87*\x)/-0.5});
	\draw [line width=1pt,color=orange,dashed] (0,0) -- (1.5,-0.87);
	\draw [->,line width=1pt,color=ffqqqq] (0,0) -- (0,-1.73);
	\draw [->,line width=1pt,color=ffqqqq] (0,0) -- (-1.5,-0.87);
	\draw [->,line width=1pt,color=qqzzqq] (0,0) -- (1.5,0.87);
	\draw [line width=1pt,color=orange,dashed] (0,0) -- (-1.5,0.87);
	\draw [->,line width=1pt,color=qqzzqq] (0,0) -- (0,1.73);
	\fill [color=blue] (0.805,1.4) circle (1.5pt);
	\draw [color=blue](0.835,1.65) node[anchor=north west] {$\xi$};
	\draw [color=orange](2.15,-0.9) node[anchor=east]{$\Delta'$};
	\begin{scriptsize}
	\fill [color=uuuuuu] (0,0) circle (1.5pt);
	\fill [color=uuuuuu] (0,-1.73) circle (1.5pt);
	\fill [color=uuuuuu] (-1.5,-0.87) circle (1.5pt);
	\fill [color=orange] (-1.5,0.87) circle (1.5pt);
	\fill [color=uuuuuu] (0,1.73) circle (1.5pt);
	\fill [color=uuuuuu] (1.5,0.87) circle (1.5pt);
	\fill [color=orange] (1.5,-0.87) circle (1.5pt);
	\end{scriptsize}
	\end{tikzpicture}
	\hfill
	\begin{tikzpicture}[line cap=round,line join=round,>=triangle 
	45,x=1.0cm,y=1.0cm, scale=1.0]
	\clip(-2.2,-2.2) rectangle (2.2,2.2);
	\fill[pattern=custom north west lines,hatchcolor=wwzzff,hatchspread=8pt] 
	(-4.47,7.74) -- (0,0) -- 
	(4.42,7.66) -- cycle; 
	\draw [domain=-9.1:9.99] plot(\x,{(-0-0*\x)/1});
	\draw [domain=-9.1:9.99] plot(\x,{(-0-0.87*\x)/-0.5});
	\draw [domain=-9.1:9.99] plot(\x,{(-0--0.87*\x)/-0.5});
	\draw [line width=1pt,color=orange,dashed] (0,0) -- (0,1.73);
	\draw [->,line width=1pt,color=ffqqqq] (0,0) -- (-1.5,-0.87);
	\draw [->,line width=1pt,color=qqzzqq] (0,0) -- (1.5,0.87);
	\draw [->,line width=1pt,color=ffqqqq] (0,0) -- (1.5,-0.87);
	\draw [line width=1pt,color=orange,dashed] (0,0) -- (0,-1.73);
	\draw [->,line width=1pt,color=qqzzqq] (0,0) -- (-1.5,0.87);
	\fill [color=blue] (-1.615,0) circle (1.5pt);
	\draw [color=blue](-1.615,0) node[anchor=south] {$\xi$};
	\draw [color=orange](0,-1.73) node[anchor=north]{$\Delta'$};
	\begin{scriptsize}
	\fill [color=uuuuuu] (0,0) circle (1.5pt);
	\fill [color=orange] (0,-1.73) circle (1.5pt);
	\fill [color=uuuuuu] (-1.5,-0.87) circle (1.5pt);
	\fill [color=uuuuuu] (-1.5,0.87) circle (1.5pt);
	\fill [color=orange] (0,1.73) circle (1.5pt);
	\fill [color=uuuuuu] (1.5,0.87) circle (1.5pt);
	\fill [color=uuuuuu] (1.5,-0.87) circle (1.5pt);
	\end{scriptsize}
	\end{tikzpicture}
	\caption{Coadjoint orbits of $\SUN 3$. The left example shows a 
		regular orbit of dimension 6, the other two examples show the same 
		non-regular orbit of dimension 4 which is diffeomorphic to 
		$\cpn 2$. The figures show $\lie h^*$ with
		roots in $\hat\Delta^+$ drawn in green and roots in $\hat\Delta^-$ in 
		red. 
		The ordering in the right example is not invariant, the other two orderings 
		are invariant.}
\end{figure}
\begin{theorem}
  \label{theo:preliminaries:complexstructure:bijectiontoinvariantorderings}
  $K$-invariant complex structures on the adjoint orbit $\orb \mu$ are
  in bijection with invariant orderings on $\hat{\Delta}$.
\end{theorem}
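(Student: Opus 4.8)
The plan is to reduce the statement to root combinatorics by evaluating everything at the base point $[\mu]\in\orb\mu\cong K/K^\mu$. A $K$-invariant almost complex structure on the homogeneous space $K/K^\mu$ is the same datum as an $\Ad(K^\mu)$-invariant complex structure $J$ on the tangent space $T_{[\mu]}\orb\mu\cong\lie k/\lie k^\mu$. Complexifying and using $(\lie k/\lie k^\mu)\otimes_{\mathbb R}\mathbb C\cong\lie g/\lie g^\mu\cong\lie m=\bigoplus_{\alpha\in\hat{\Delta}}\lie g_\alpha$ from \eqref{eq:preliminaries:complexstructure:kmuandm}, I would work instead with the complex-linear extension $J^{\mathbb C}$ of $J$ to $\lie m$, characterised by three properties: $(J^{\mathbb C})^2=-\mathrm{id}$, commutation with the conjugation $\sigma$ of $\lie g$ with respect to the compact real form $\lie k$ (this is the reality of $J$), and $\Ad(K^\mu)$-invariance. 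The bijection will then send such a $J$ to the set $\hat{\Delta}^+$ of roots on which $J^{\mathbb C}$ acts as $+\I$.

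First I would pin down how $J^{\mathbb C}$ interacts with the root decomposition. Since $\lie t\subseteq\lie k^\mu$, the operator $J^{\mathbb C}$ commutes with $\ad(\lie t)$ and hence, by complex linearity, with $\ad(\lie h)$; because the $\lie g_\alpha$ ($\alpha\in\hat{\Delta}$) are the pairwise distinct one-dimensional joint eigenspaces of $\ad(\lie h)$, the operator $J^{\mathbb C}$ must preserve each $\lie g_\alpha$ and act there as a scalar, necessarily $\pm\I$ because $(J^{\mathbb C})^2=-\mathrm{id}$. Setting $\hat{\Delta}^+=\{\alpha:J^{\mathbb C}|_{\lie g_\alpha}=+\I\}$ and $\hat{\Delta}^-=\hat{\Delta}\setminus\hat{\Delta}^+$, the relation $\sigma(\lie g_\alpha)=\lie g_{-\alpha}$ together with the antilinearity of $\sigma$ and $J^{\mathbb C}\sigma=\sigma J^{\mathbb C}$ forces $\hat{\Delta}^-=-\hat{\Delta}^+$, which is exactly \eqref{eq:preliminaries:complexstructure:invariantordering:i}. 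Conversely any subset satisfying this condition defines, via these eigenvalue prescriptions, a $\sigma$-compatible $J^{\mathbb C}$ and hence a genuine real $J$ on $\lie m$.

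Next I would translate the remaining invariance and integrability into conditions \eqref{eq:preliminaries:complexstructure:invariantordering:ii} and \eqref{eq:preliminaries:complexstructure:invariantordering:iii}. Because $K$ is compact and connected the stabiliser $K^\mu$ is connected, so $\Ad(K^\mu)$-invariance of $J$ is equivalent to $\ad(\lie g^\mu)$-invariance; beyond the already-used $\ad(\lie h)$ this amounts to $J^{\mathbb C}$ commuting with $\ad(\lie g_\gamma)$ for all $\gamma\in\Delta'$. As $\ad(\lie g_\gamma)$ maps $\lie g_\alpha$ onto $\lie g_{\alpha+\gamma}$ precisely when $\alpha+\gamma\in\Delta$ (and then $\alpha+\gamma\in\hat{\Delta}$ since $(\alpha+\gamma)(\mu)=\alpha(\mu)\neq0$), commutation forces $\alpha$ and $\alpha+\gamma$ to carry the same eigenvalue, which is \eqref{eq:preliminaries:complexstructure:invariantordering:iii}. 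For integrability I would invoke the standard criterion for invariant almost complex structures on homogeneous spaces (Newlander--Nirenberg together with homogeneity): $J$ is integrable iff the subspace $\lie q:=\lie g^\mu\oplus\lie m^{1,0}$, with $\lie m^{1,0}=\bigoplus_{\alpha\in\hat{\Delta}^+}\lie g_\alpha$, is a complex Lie subalgebra of $\lie g$, i.e.\ $[\lie m^{1,0},\lie m^{1,0}]\subseteq\lie q$. Using $[\lie g_\alpha,\lie g_\beta]\subseteq\lie g_{\alpha+\beta}$, for $\alpha,\beta\in\hat{\Delta}^+$ the bracket automatically lands in $\lie g^\mu$ when $\alpha+\beta\in\Delta'$ and lands in $\lie m^{1,0}$ exactly when $\alpha+\beta\in\hat{\Delta}^+$; hence the subalgebra condition is equivalent to \eqref{eq:preliminaries:complexstructure:invariantordering:ii}.

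Finally I would check that the two assignments---an invariant ordering $\hat{\Delta}^+$ to the almost complex structure built from the $\pm\I$ prescription, and a $K$-invariant complex structure to its set of $+\I$-eigenroots---are mutually inverse, which is immediate once the previous steps have identified the admissible data on both sides. I expect the main obstacle to be the integrability step: carefully justifying that vanishing of the Nijenhuis tensor for a $K$-invariant almost complex structure is equivalent to the algebraic closedness $[\lie m^{1,0},\lie m^{1,0}]\subseteq\lie q$, and keeping track of exactly when the structure constants make $[\lie g_\alpha,\lie g_\beta]=\lie g_{\alpha+\beta}$ nonzero. By contrast, the reality, torus-invariance, and $\ad(\lie g_\gamma)$-invariance steps are comparatively routine root-space bookkeeping.
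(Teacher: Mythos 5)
Your argument is correct. Note that the paper itself gives no proof of this theorem --- it is quoted from the reference \cite{bordemann.forger.roemer:1986a} --- so there is no in-paper argument to compare against; your route (reduce to an $\Ad(K^\mu)$-invariant complex structure on $\lie m = \bigoplus_{\alpha\in\hat\Delta}\lie g_\alpha$, observe that commutation with $\ad(\lie h)$ forces $J^{\mathbb C}$ to act as $\pm\I$ on each one-dimensional root space, and translate reality, $\ad(\lie g_\gamma)$-invariance for $\gamma\in\Delta'$, and integrability into \eqref{eq:preliminaries:complexstructure:invariantordering:i}, \eqref{eq:preliminaries:complexstructure:invariantordering:iii} and \eqref{eq:preliminaries:complexstructure:invariantordering:ii} respectively) is exactly the standard proof found there. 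The one step you rightly single out, namely that for an invariant almost complex structure the vanishing of the Nijenhuis tensor is equivalent to $[\lie m^{1,0},\lie m^{1,0}]\subseteq\lie g^\mu\oplus\lie m^{1,0}$, is a standard homogeneous-space fact but should be proved or cited rather than merely invoked. Two minor points to make explicit: you use that $K^\mu$ is connected (it is the centralizer of a torus in a compact connected group, a fact the paper also uses later), and in the subalgebra criterion you should record that $[\lie g^\mu,\lie m^{1,0}]\subseteq\lie g^\mu\oplus\lie m^{1,0}$ already follows from \eqref{eq:preliminaries:complexstructure:invariantordering:iii} together with $\ad(\lie h)$-invariance, so that \eqref{eq:preliminaries:complexstructure:invariantordering:ii} really is the only remaining constraint for integrability.
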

In fact, we note that the complexified tangent space $T_\mu^{\mathbb
  C} \orb\mu$ can be identified with the span of $\lie g_\alpha$ for
$\alpha \in \hat\Delta$ via the map
\begin{equation}\label{eq:tangentspaceidentification}
  \langle \lie g_\alpha \mid \alpha \in \hat\Delta \rangle \ni X \longmapsto 
  X_{\orb\mu}(\mu) := \partt \Ad_{\exp(t X)} \mu \in T_\mu^{\mathbb C} \orb \mu 
  \glpunkt 
\end{equation} 
Given an invariant ordering, the corresponding complex structure is
given by multiplication by $\I$ on the $\lie g_\alpha$ with $\alpha
\in \hat\Delta^+$ and by multiplication by $-\I$ on $\lie
g_{-\alpha}$.  Invariant orderings always exist as we can choose any
linear functional $\ell$ on $\lie h^*$ that takes real values on all
roots, vanishes on $\Delta'$ and does not vanish on
$\hat{\Delta}$. Then we get an invariant ordering by letting all
$\alpha \in \hat\Delta^+$ with $\ell(\alpha) > 0$ be positive.

For all $\alpha \in \Delta$ we choose elements $X_\alpha \in \lie
g_\alpha$, $H_\alpha \in \lie h$ and $Y_\alpha \in \lie g_{-\alpha}$ satisfying 
$[X_\alpha, Y_\alpha] = H_\alpha$, $[H_\alpha, X_\alpha] = 2 X_\alpha$ and 
$[H_\alpha, Y_\alpha] = Y_\alpha$ so
that they span subalgebras isomorphic to $\slc$, as can be seen e.g. in
\cite[Chapter II.4, (2.26)]{knapp:lieAlgs}.
As a consequence of the above theorem we obtain the following proposition.
\begin{proposition}
  \label{theo:preliminaries:complexstructure:uniquekaehlerstructure}
  Let $\orb \mu$ be an adjoint orbit of a compact connected semisimple
  Lie group $K$ and $\lambda := - \I B(\mu, \cdot) \in \lie h^*$. Then:
  \begin{propositionlist}
  \item There is a unique $K$-invariant complex structure
    $J_{\textnormal{Kähler}}$ on $\orb{\mu}$ such that $\orb\mu$
    endowed with the KKS form $\omega_{\orb\mu}$, the complex structure
    $J_{\textnormal{Kähler}}$ and the corresponding Riemannian metric
    $g(u,v) := \omega_{\orb\mu}(u,J_{\textnormal{Kähler}} v)$ becomes
    a Kähler manifold.
  \item The complex structure $J_{\textnormal{Kähler}}$ corresponds
    to an invariant ordering on $\smash{\hat{\Delta}}$ for which
    $\smash{\alpha \in \hat \Delta}$ is positive if and only if
    $\lambda(H_\alpha) < 0$.
   \end{propositionlist}
\end{proposition}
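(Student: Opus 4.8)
The plan is to lean on the bijection of \autoref{theo:preliminaries:complexstructure:bijectiontoinvariantorderings} and reduce the Kähler condition to a single pointwise positivity statement at $\mu$. Every $K$-invariant complex structure $J$ on $\orb\mu$ arises from an invariant ordering $\hat\Delta^+$, and together with the KKS form it produces a $K$-invariant symmetric $(0,2)$-tensor $g(u,v) = \omega_{\orb\mu}(u,Jv)$. Since $J$ is integrable and $\omega_{\orb\mu}$ is closed, the triple $(\omega_{\orb\mu},J,g)$ is Kähler as soon as $g$ is a genuine (symmetric, positive definite) Riemannian metric, the fundamental form of $(g,J)$ being then exactly $\omega_{\orb\mu}$. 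As all of these are $K$-invariant conditions and $K$ acts transitively on $\orb\mu$, it suffices to check them at the base point $\mu$, where I can use the identification \eqref{eq:tangentspaceidentification} of $T_\mu^{\mathbb C}\orb\mu$ with $\lie m = \bigoplus_{\alpha\in\hat\Delta}\lie g_\alpha$.

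First I would record the two ingredients in root-space terms. Transporting the KKS form to the adjoint orbit via the Killing form (\autoref{lemma:adjoint=coadjoint}) gives $\omega_{\orb\mu}(X_{\orb\mu}(\mu),Y_{\orb\mu}(\mu)) = B(\mu,[X,Y])$ up to the fixed KKS normalisation; since $\mu\in\lie h$ and $B(\lie g_\alpha,\lie g_\beta)=0$ unless $\alpha+\beta=0$, the form pairs $\lie g_\alpha$ only with $\lie g_{-\alpha}$. The complex structure acts by $+\I$ on $\lie g_\alpha$ for $\alpha\in\hat\Delta^+$ and by $-\I$ on $\lie g_{-\alpha}$. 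With these two facts the compatibility $\omega_{\orb\mu}(Ju,Jv)=\omega_{\orb\mu}(u,v)$ is immediate and holds for \emph{every} invariant ordering: on the only nonvanishing pairing $\lie g_\alpha\times\lie g_{-\alpha}$ with $\alpha\in\hat\Delta^+$ the two factors of $\pm\I$ multiply to $1$, and all other pairings vanish. Hence $g$ is symmetric and its fundamental form equals $\omega_{\orb\mu}$ for any ordering, so the genuine constraint is positivity, and this is where the ordering enters.

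Next I would compute $g$ through its Hermitian extension to the holomorphic tangent space $T^{(1,0)}_\mu = \bigoplus_{\alpha\in\hat\Delta^+}\lie g_\alpha$, which by the pairing above is block-diagonal over the roots $\alpha\in\hat\Delta^+$, so positivity can be tested one root at a time. For $0\neq X_\alpha\in\lie g_\alpha$ the diagonal entry is a definite-sign real multiple of $\I\,B(\mu,[X_\alpha,\overline{X_\alpha}])$, where $\overline{\,\cdot\,}$ is conjugation with respect to the compact real form; since $[X_\alpha,\overline{X_\alpha}]$ is a nonzero real multiple of $H_\alpha$ of definite sign, this entry has the sign of $\pm\lambda(H_\alpha)$, using $\lambda = -\I B(\mu,\cdot)$. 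Thus $g$ is positive definite precisely when $\lambda(H_\alpha)<0$ for every $\alpha\in\hat\Delta^+$, which is the ordering asserted in part (ii).

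Finally I would verify that this sign rule does define an invariant ordering, and that it is the unique one yielding positivity. Writing $H_\alpha = 2 t_\alpha/B(t_\alpha,t_\alpha)$ with $t_\alpha\in\lie h$ the $B$-dual of $\alpha$, and using that $B$ is positive definite on the real span of the $t_\alpha$, one finds $\lambda(H_\alpha) = c_\alpha\,(-\I\,\alpha(\mu))$ with $c_\alpha>0$; as $\alpha(\mu)$ is purely imaginary on the compact form, $\lambda(H_\alpha)$ is real, vanishes exactly on $\Delta'$, and has the same sign as the value at $\alpha$ of the linear functional $\ell$ on $\lie h^*$ defined by $\ell(\beta):=\I\,\beta(\mu)$. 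This $\ell$ is real-valued on roots, vanishes on $\Delta'$ and is nonzero on $\hat\Delta$, so by the existence recipe recorded after \autoref{theo:preliminaries:complexstructure:bijectiontoinvariantorderings} the prescription ``$\alpha$ positive iff $\lambda(H_\alpha)<0$'' is a bona fide invariant ordering, whence a $K$-invariant complex structure. Since positivity of $g$ forces exactly this ordering, the corresponding $J_{\textnormal{Kähler}}$ is the unique $K$-invariant Kähler structure, proving both parts. I expect the main obstacle to be the sign bookkeeping in the positivity step: correctly tracking the KKS normalisation, the conjugation $\overline{X_\alpha}$, and the bracket $[X_\alpha,\overline{X_\alpha}]$ so that they land precisely on the inequality $\lambda(H_\alpha)<0$ rather than its opposite.
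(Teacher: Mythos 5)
The paper does not actually prove this proposition --- it is stated as ``a consequence of the above theorem'' with the substance deferred to \cite{bordemann.forger.roemer:1986a} --- so there is no in-paper argument to compare against; your proof supplies the standard missing argument and it is correct. Reducing everything to the base point $\mu$, noting that $\omega_{\orb\mu}$ pairs $\lie g_\alpha$ only with $\lie g_{-\alpha}$ (so compatibility with $J$ holds for \emph{every} invariant ordering and only positivity of $g$ discriminates), and then reading off the sign of $\I\,B(\mu,[X_\alpha,\overline{X_\alpha}])$ root by root is exactly the right route, and it does deliver both existence and uniqueness. One trivial slip: since $\lambda(H_\alpha)=c_\alpha\,(-\I\,\alpha(\mu))$ with $c_\alpha>0$, your candidate functional $\ell(\beta)=\I\,\beta(\mu)$ has the \emph{opposite} sign to $\lambda(H_\alpha)$, not the same sign --- which is harmless (replace $\ell$ by $-\ell$, or note that the condition $\lambda(H_\alpha)<0$ is precisely $\ell(\alpha)>0$), but worth fixing given that the whole point of part \textit{(ii)} is the sign.
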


\subsection{A construction by Karabegov}
\label{sec:karabegov}

We now recall the construction of star products of Wick type on
coadjoint orbits obtained in \cite{karabegov:1999a} (more details on the
original construction can also be found in \cite{philipp:thesis}).
Notice that, due to the isomorphism between the adjoint and coadjoint
orbits stated in \autoref{lemma:adjoint=coadjoint}, we can
equivalently apply this construction for adjoint orbits.

Consider a compact connected semisimple Lie group $K$ and an adjoint
orbit $\orb{\mu}$, with $\mu \in \lie k$.  Given a complex $K$-invariant 
structure
$J$ on $\orb{\mu}$ one can construct two commuting complex conjugate
representations of the Lie algebra $\lie k$ as differential operators
on $\orb{\mu}$. These representations can be extended to complex
algebra representations of the corresponding complexified universal
enveloping algebra $\univ(\lie g)$.
Thus, composing those algebra representations with the evaluation at
the constant function $\Unit \in \Pol(\orb{\mu})$ we obtain maps
\begin{equation} \label{eq:karabegov:basics}
  L, R : \univ(\lie g) \to \Pol(\orb{\mu}) ,
\end{equation}
allowing us to push forward the non-commutative product from
$\univ(\lie g)$ to $\mathsf{im}(L)\subseteq\Pol(\orb{\mu})$, after
quotienting out the kernel which turns out to be an ideal.
Finally, introducing a parameter $\hbar \in \mathbb C$ in the
construction of the representations gives rise to a star product
of (anti-)Wick type.

In order to construct the representations of $\lie k$ mentioned above,
one needs to introduce a new class of maps on $\orb{\mu}$. 
\begin{definition}[\boldmath $K$-equivariant family]
  A $K$-equivariant family on $\orb{\mu}$ is defined as a map $\lie k
  \to \Cinfty(\orb{\mu})$, $X \mapsto f_X$, which is linear and
  $K$-equivariant with respect to the adjoint action of $K$ on $\lie
  k$ and the shift action on $\Cinfty(\orb\mu)$ given by $k \acts f
  (x) = f(\Ad_{k^{-1}} x)$ for $f \in \Cinfty(\orb\mu)$, $k \in K$ and
  $x \in \orb \mu$.
\end{definition}
$K$-equivariant families can be characterized as follows.
\begin{lemma}\label{lemma:karabegov:kequivfctsandmomentummaps}
  $K$-equivariant families on $\orb{\mu}$ are in one-to-one
  correspondence with $K$-equivariant maps $\gamma: \orb{\mu} \to \lie
  k^*$ via $f_X(x) = \langle \gamma(x) , X \rangle$.
\end{lemma}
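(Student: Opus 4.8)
The plan is to write down the two assignments explicitly, check that each is well defined, verify that they are mutually inverse, and finally match the two equivariance conditions; the only genuine content lies in the last step.

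Given a $K$-equivariant family $X \mapsto f_X$, I would define $\gamma \colon \orb\mu \to \lie k^*$ by $\langle \gamma(x), X\rangle := f_X(x)$. For fixed $x$ the assignment $X \mapsto f_X(x)$ is linear by linearity of the family, so $\gamma(x)$ is indeed an element of $\lie k^*$; smoothness of $\gamma$ follows by choosing a basis $X_1, \dots, X_n$ of $\lie k$ with dual basis $X_1^*, \dots, X_n^*$ and writing $\gamma = \sum_i f_{X_i}\, X_i^*$, a finite sum of smooth functions times constant covectors. Conversely, given a $K$-equivariant map $\gamma$, I set $f_X(x) := \langle \gamma(x), X\rangle$, which is linear in $X$ and smooth in $x$, being the composition of $\gamma$ with the linear functional $\langle \,\cdot\,, X\rangle$ on $\lie k^*$. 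These two constructions are visibly inverse to one another, so everything reduces to comparing the equivariance properties.

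For this I would recall that the coadjoint action is $\langle \Ad^*_k \xi, X\rangle = \langle \xi, \Ad_{k^{-1}} X\rangle$, so that $K$-equivariance of $\gamma$, namely $\gamma(\Ad_k x) = \Ad^*_k \gamma(x)$, means upon pairing with an arbitrary $X \in \lie k$ that
\begin{equation*}
  f_X(\Ad_k x) = \langle \gamma(\Ad_k x), X\rangle = \langle \Ad^*_k \gamma(x), X\rangle = \langle \gamma(x), \Ad_{k^{-1}} X\rangle = f_{\Ad_{k^{-1}} X}(x)
\end{equation*}
for all $k, X, x$. On the other hand, the family is $K$-equivariant exactly when $f_{\Ad_k X}(x) = f_X(\Ad_{k^{-1}} x)$, and substituting $k^{-1}$ for $k$ turns this into the identity $f_{\Ad_{k^{-1}} X}(x) = f_X(\Ad_k x)$ just obtained. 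Hence the two conditions are equivalent and the correspondence is established.

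Since the whole argument is essentially bookkeeping, I do not expect a serious obstacle. The one point demanding care is keeping the inverses and sign conventions consistent, in particular confirming that the shift action $k \acts f(x) = f(\Ad_{k^{-1}} x)$ on $\Cinfty(\orb\mu)$ is exactly the predual of the coadjoint action on $\lie k^*$, so that the two equivariance identities coincide rather than differing by an inversion.
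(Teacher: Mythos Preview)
Your proof is correct; the paper states this lemma without proof, treating the correspondence as evident, so there is nothing to compare against beyond noting that your bookkeeping matches the definitions exactly. Your caution about conventions is well placed and you got them right: the shift action $k\acts f(x)=f(\Ad_{k^{-1}}x)$ does correspond precisely to the coadjoint action under the pairing, so the two equivariance conditions agree on the nose.
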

Fixing any $x \in \orb\mu$ the map $\gamma \mapsto \gamma(x)$ defines
a bijection between $K$-equivariant maps $\gamma : \orb{\mu} \to \lie
k^*$ and $K^x$-invariant elements of $\lie k^*$. In particular, the
space of $K$-equivariant maps $\gamma: \orb\mu \to \lie k^*$ is a
finite dimensional vector space.

Now consider a $K$-invariant complex structure $J$ on $\orb{\mu}$. 
For $X \in
\lie k$ we can decompose the fundamental vector field $v_X(x) :=
(-X)_{\orb{\mu}}(x) = \parttsmall \exp(-t X) \acts x$ into its $(1,0)$
and $(0,1)$ components with respect to $J$, i.e.
\begin{equation}
  v_X = \xi_X + \eta_X
  \quad \text{with} \quad \xi_X \in \Secinfty(T^{(1,0)} \orb{\mu})
  \quad \text{and}\quad \eta_X \in \Secinfty(T^{(0,1)} \orb{\mu}) 
  \glpunkt
\end{equation}
Note that $v_X \in \Secinfty (T M) \subseteq \Secinfty(T^{\mathbb C}
M)$ is real by definition, hence $\overline{\xi_X} =
\eta_X$. Furthermore, since the action of $K$ is holomorphic the
vector field $\xi_X$ is holomorphic and $\eta_X$ is antiholomorphic.
\begin{proposition}	
  \label{theo:karabegov:repsfromkequivfamily}
  Any $K$-equivariant family $f_X$ on $\orb{\mu}$ defines
  $K$-equivariant commuting complex conjugate representations $\opl,
  \opr: \lie k \to \Diffop(\orb{\mu})$, $X \mapsto \opl_X := \opl(X) =
  \xi_X - \I f_X$ and $X \mapsto \opr_X := \opr(X) = \eta_X + \I f_X$.
\end{proposition}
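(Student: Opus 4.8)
The plan is to verify the four asserted properties — that $\opl$ and $\opr$ are Lie algebra homomorphisms, that they commute, that they are complex conjugates of each other, and that they are $K$-equivariant — by reducing everything to a single compatibility identity between the holomorphic vector fields $\xi_X$ and the family $f_X$. The complex conjugate relation is immediate: since $\overline{\xi_X} = \eta_X$ and $f_X$ is real-valued, $\overline{\opl_X} = \overline{\xi_X} + \I f_X = \eta_X + \I f_X = \opr_X$. Because $\lie k$ is a real Lie algebra with real structure constants, conjugating the homomorphism and equivariance statements for $\opl$ yields the corresponding statements for $\opr$; thus it suffices to treat $\opl$ and, separately, to prove $[\opl_X, \opr_Y] = 0$. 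Likewise, $K$-equivariance of $\opl$ follows once we observe that each ingredient transforms correctly: the assignment $X \mapsto v_X$ is natural under $\Ad$, the complex structure $J$ is $K$-invariant (so the projection to the $(1,0)$-part $\xi_X$ is equivariant), and $f_X$ is an equivariant family by hypothesis.

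Next I would record the infinitesimal equivariance of the family. Differentiating $f_{\Ad_k X} = k \acts f_X$ along $k = \exp(tZ)$ gives $v_Z(f_X) = f_{[Z,X]}$ for $Z, X \in \lie k$, since the velocity of $t \mapsto \Ad_{\exp(-tZ)} x$ at $t = 0$ is exactly $v_Z(x)$. For the vector-field parts, the fundamental vector field assignment $X \mapsto v_X$ is a Lie algebra homomorphism, $[v_X, v_Y] = v_{[X,Y]}$; splitting into $(1,0)$ and $(0,1)$ components and using that $J$ is integrable together with the holomorphicity of $\xi_X$ and antiholomorphicity of $\eta_X$ (so that the mixed bracket $[\xi_X, \eta_Y]$ vanishes and each of the $(1,0)$, $(0,1)$ bundles is involutive), one obtains $[\xi_X, \xi_Y] = \xi_{[X,Y]}$ and $[\eta_X, \eta_Y] = \eta_{[X,Y]}$ separately. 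A direct computation of the operator commutators, treating $\xi_X$ as a first-order and $f_X$ as a zeroth-order operator, then gives
\begin{align}
  [\opl_X, \opl_Y] &= \xi_{[X,Y]} - \I\big(\xi_X(f_Y) - \xi_Y(f_X)\big), \\
  [\opl_X, \opr_Y] &= \I\big(\xi_X(f_Y) + \eta_Y(f_X)\big) .
\end{align}
Hence both the homomorphism property $[\opl_X, \opl_Y] = \opl_{[X,Y]}$ and the commutation property $[\opl_X, \opr_Y] = 0$ reduce to the single identity $\xi_X(f_Y) - \xi_Y(f_X) = f_{[X,Y]}$. Using $\overline{\xi_X(f_Y)} = \eta_X(f_Y)$ (as $f_Y$ is real) together with the infinitesimal equivariance above, one checks that the real part of this identity holds automatically, so everything comes down to its imaginary part, namely
\begin{equation}
  (J v_X)(f_Y) = (J v_Y)(f_X) \qquad (X, Y \in \lie k). \tag{$\star$}
\end{equation}

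Establishing $(\star)$ is the main obstacle. My plan is to reduce it to the base point $\mu$: since $v_X$, $J$ and $f_X$ all transform naturally under $K$ and every point of $\orb\mu$ has the form $\Ad_k \mu$, the vanishing at $\mu$ of the bilinear expression $(X,Y) \mapsto (J v_X)(f_Y)(\mu) - (J v_Y)(f_X)(\mu)$ for all $X, Y$ forces, after replacing $X, Y$ by $\Ad_{k^{-1}} X, \Ad_{k^{-1}} Y$, its vanishing at every point. At $\mu$ we have $v_X(\mu) = [\mu, X] \in \lie m = \bigoplus_{\alpha \in \hat\Delta} \lie g_\alpha$, and $J$ acts there as $\pm\I$ on the summands indexed by $\hat\Delta^{\pm}$; consequently $J v_X(\mu) = v_{j(X)}(\mu)$ where $j = \I(P^+ - P^-)$ and $P^{\pm}$ are the projections of the complexified $X$ onto $\bigoplus_{\alpha \in \hat\Delta^{\pm}} \lie g_\alpha$. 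Extending the infinitesimal equivariance $\mathbb{C}$-linearly to $\lie g$ then gives $(J v_X)(f_Y)(\mu) = \langle \gamma(\mu), [j(X), Y] \rangle$, and the crucial point is that $\gamma(\mu)$ is $K^\mu$-invariant; since $T \subseteq K^\mu$, it lies in the weight-zero space and annihilates every root space $\lie g_\alpha$ with $\alpha \neq 0$. Therefore only the Cartan component of $[j(X), Y]$ contributes, which pairs $\lie g_\alpha$ with $\lie g_{-\alpha}$, and re-indexing the $\hat\Delta^-$-terms via $\alpha \mapsto -\alpha$ together with the antisymmetry $[X^\alpha, Y^{-\alpha}] = -[Y^{-\alpha}, X^\alpha]$ matches the $+\I$ coming from $\hat\Delta^+$ against the $-\I$ coming from $\hat\Delta^-$, showing that $\langle \gamma(\mu), [j(X), Y] \rangle$ is symmetric in $X$ and $Y$. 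This is exactly $(\star)$ at $\mu$.

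I expect the root-space bookkeeping in this final step to be the delicate part, although it is purely combinatorial once the reduction to $\mu$ is in place. As a sanity check, and an alternative coordinate-free route for the special case in which $f_X$ is the canonical moment map (so that $\D f_X = \iota_{v_X} \omega_{\orb\mu}$), one can use that the KKS form is of type $(1,1)$: writing $\xi_X(f_Y) = \omega_{\orb\mu}(v_Y, \xi_X)$ and using $\omega_{\orb\mu}(\xi_X, \xi_Y) = 0$ reduces $\xi_X(f_Y) - \xi_Y(f_X)$ directly to $-\omega_{\orb\mu}(v_X, v_Y) = f_{[X,Y]}$. Since both sides of the target identity are linear in the family $f_X$, the general case then follows from the base-point computation above, which in particular also covers the equivariant families that are not moment maps.
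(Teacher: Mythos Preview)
Your argument is correct. The paper itself does not prove this proposition; immediately after the statement it refers the reader to Karabegov's original article \cite{karabegov:1999a} for the proof. Your write-up therefore supplies what the paper omits: a self-contained verification that reduces both the representation property and the commutation $[\opl_X,\opr_Y]=0$ to the single identity $\xi_X(f_Y)-\xi_Y(f_X)=f_{[X,Y]}$, and then checks the non-automatic (imaginary) half of that identity by a root-space computation at the base point~$\mu$, using only that $\gamma(\mu)$ is $K^\mu$-invariant and hence supported on $\lie h$. The reindexing $\alpha\mapsto -\alpha$ that exhibits the symmetry of $\langle\gamma(\mu),[j(X),Y]\rangle$ in $X,Y$ is exactly the right mechanism, and your use of the holomorphy of $\xi_X$ and antiholomorphy of $\eta_Y$ to get $[\xi_X,\eta_Y]=0$ (hence $[\xi_X,\xi_Y]=\xi_{[X,Y]}$) is the clean way to handle the vector-field part. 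The alternative moment-map verification via the $(1,1)$-property of the KKS form is a good consistency check; note that it really only covers the canonical family, so the general case does rest on your base-point argument.
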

Here, by commuting we mean that $\opl_X$ and $\opr_Y$ commute for all
$X,Y \in \lie k$.
For a proof of \autoref{theo:karabegov:repsfromkequivfamily}
see \cite{karabegov:1999a}.

Given a $K$-equivariant family on $\orb{\mu}$ we extend the
representations defined in
\autoref{theo:karabegov:repsfromkequivfamily}, $X \mapsto \opl_X$ and
$X \mapsto \opr_X$ of $\lie k$ to complex algebra representations of
the complexified universal enveloping algebra $\univ(\lie g)$.
Let $S$ be the antipode of $\univ(\lie g)$ and $\Unit \in
\Cinfty(\orb{\mu})$ the constant function.
\begin{lemma}\label{lemma:karabegovII:starProdukt:RisLwithAntipode}
  Let $\Opl(u) = \opl(u) \Unit$ and $\Opr(u) = \opr(u) \Unit$.
  \begin{lemmalist}
  \item We have $\Opl( u) = \Opr(S u)$ for all $u \in \univ(\lie g)$.
  \item $\ker_L$ is a two sided ideal in $\univ(\lie g)$.
  \end{lemmalist}
\end{lemma}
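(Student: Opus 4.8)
The plan is to prove the first identity by a short induction and then to read off the ideal property of $\ker_L$ from it. Throughout I write $L(u)=\Opl(u)=\opl(u)\Unit$ and $R(v)=\Opr(v)=\opr(v)\Unit$, and I use that $\opl,\opr$ are the algebra homomorphisms $\univ(\lie g)\to\Diffop(\orb\mu)$ extending the Lie algebra maps of \autoref{theo:karabegov:repsfromkequivfamily}. Since the products $X_1\cdots X_n$ with $X_i\in\lie g$ span $\univ(\lie g)$ and both $L$ and $R\circ S$ are linear, it suffices to verify \textit{i.)} on such products, by induction on $n$.

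The single computation that drives everything is on a generator $X\in\lie g$: the vector fields $\xi_X,\eta_X$ annihilate the constant function, so $\opl_X\Unit=-\I f_X$ and $\opr_X\Unit=\I f_X$, and hence
\[
  \opl_X\Unit+\opr_X\Unit=0\glpunkt
\]
For the inductive step I would write $u=Xw$ with $w=X_2\cdots X_n$ of lower length and compute, using that $\opl$ is a homomorphism, $L(Xw)=\opl_X\bigl(\opl(w)\Unit\bigr)$. Applying the induction hypothesis $\opl(w)\Unit=\opr(Sw)\Unit$ and then moving $\opl_X$ past $\opr(Sw)$ gives $L(Xw)=\opr(Sw)\bigl(\opl_X\Unit\bigr)=-\opr(Sw)\bigl(\opr_X\Unit\bigr)$. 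The move of $\opl_X$ past $\opr(Sw)$ is exactly where the commutativity of the two representations from \autoref{theo:karabegov:repsfromkequivfamily} is used: $\opl_X$ commutes with every $\opr_Y$, hence with the product $\opr(Sw)$. Finally, since $S$ is an antihomomorphism with $S(X)=-X$, one has $S(Xw)=-S(w)X$, so $R(S(Xw))=-\opr(Sw)\opr_X\Unit$, which matches; together with the base case $u=\Unit$ (both sides equal $\Unit$) this closes the induction.

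For \textit{ii.)} I would first note that $\ker_L$ is automatically a left ideal: if $\opl(u)\Unit=0$ then $L(au)=\opl(a)\bigl(\opl(u)\Unit\bigr)=0$ for all $a\in\univ(\lie g)$, because $\opl$ is a homomorphism; the identical argument shows $\ker_R:=\ker\Opr$ is a left ideal. Now \textit{i.)} gives $u\in\ker_L\iff\Opr(Su)=0\iff Su\in\ker_R$, that is $\ker_L=S^{-1}(\ker_R)$. Since the inverse of an antihomomorphism is again an antihomomorphism, $S^{-1}$ sends the left ideal $\ker_R$ to a right ideal, so $\ker_L$ is a right ideal as well, hence two-sided.

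The only genuinely nontrivial point is the commutation step in the induction; the rest is formal manipulation with the antipode. I expect the main obstacle to be bookkeeping rather than substance: one must be careful that the commutativity asserted in \autoref{theo:karabegov:repsfromkequivfamily} only for $\lie k$-generators does propagate to all of $\univ(\lie g)$ — which it does, as a product of operators each commuting with $\opl_X$ again commutes with $\opl_X$ — and that the extension of $\opl,\opr$ to the complexification and then to the enveloping algebra is used consistently.
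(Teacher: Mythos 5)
Your proposal is correct. For part \textit{ii.)} your argument is essentially the paper's: the paper likewise observes that $\ker_L$ is a left ideal because $\opl$ is a homomorphism and then combines part \textit{i.)} with the antihomomorphism property of $S$; your phrasing via $\ker_L=S^{-1}(\ker_R)$ and the fact that an antiautomorphism turns left ideals into right ideals is just a repackaging of the more direct computation $\Opl(ua)=\Opr(S(ua))=\opr(Sa)\,\Opr(Su)=\opr(Sa)\,\Opl(u)=0$ (and is legitimate, since $S^2=\mathrm{id}$ on $\univ(\lie g)$). For part \textit{i.)} the paper gives no argument at all and simply cites Karabegov, so here you genuinely supply something: a self-contained induction on the length of a spanning monomial $X_1\cdots X_n$, driven by the single identity $\opl_X\Unit=-\I f_X=-\opr_X\Unit$ together with the commutativity $[\opl_X,\opr_Y]=0$. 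The induction is sound: there is no well-definedness issue in inducting on a chosen factorization, because $L$ and $R\circ S$ are both linear maps on $\univ(\lie g)$ and you only need to verify their equality on a spanning set, using that $\opl$ is a homomorphism to peel off the leftmost factor. You also correctly flag and resolve the two points that could trip this up, namely that the commutativity stated for $\lie k$ extends complex-bilinearly to $\lie g$ and then to products, and that $S(Xw)=-S(w)X$ produces exactly the sign coming from $\opl_X\Unit=-\opr_X\Unit$. In short: same route as the paper where the paper has a route, and a correct standard argument where the paper only has a citation.
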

\begin{proof}
  For the first item see \cite{karabegov:1999a}. Since $u \mapsto \opl(u)$
  is a homomorphism $\ker_L$ is a left ideal. Using the first claim
  and noting that $S$ is an anti-homomorphism concludes the proof.
\end{proof}
It is important that the image of $L$ and $R$ is actually contained in
$\Pol(\orb\mu) \subseteq \Cinfty_{\mathbb C}(\orb\mu)$. To see this, we note 
that
the polynomials on $\orb\mu$ coincide with the $K$-finite elements of
$\Cinfty_{\mathbb C}(\orb\mu)$, i.e.~the smooth functions whose shifts span 
finite dimensional subspaces, \cite[Lemma 15]{karabegov:1998c}. Using the
definition of a $K$-equivariant family, it is easy to see that all
elements of $\mathsf{im}(L)$ and $\mathsf{im}(R)$ are indeed
$K$-finite.

Now we introduce a parameter $\hbar$ into the construction. 
Suppose that the 
$K$-equivariant family
$\smash{\withhbar{f_\bullet}}$ on $\orb{\mu}$ depends rationally on
$\hbar$ and is regular at $\hbar = 0$. In other words, we suppose that
\begin{equation}
  \withhbar{f_\bullet} 
  = 
  \sum_j b_j(\hbar) f^{j}_\bullet
\end{equation}
for a finite number of $K$-equivariant families $f^{j}_\bullet$ and
rational functions $b_j$ regular at $\hbar = 0$ (i.e.~having no pole
at $\hbar = 0$). Denote the set of poles of the functions $b_j$ by 
$\polefamily$ and note that it is finite.

Let $\sum_{r \geq 0} \formParam^r f_{\bullet,r}$ be
the formal expansion of $\withhbar{f_\bullet}$ around $\hbar = 0$.  In
the following we obtain a formal star product of Wick type from
$\smash{\withhbar{f_\bullet}}$ and in
\autoref{theorem:karabegovII:classification} it is shown that
$f_{\bullet,r}$ determines the Karabegov form associated to this star
product. In particular $f_{\bullet,0}$ determines the symplectic form
that is deformed.

In order to make sure that $f_{\bullet,r}$ determines the $r$-th order
of the Karabegov form, we let $\oplhs (u)$ with $u \in \univ(\lie g)$
be the operators on $\orb{\mu}$ associated to the $K$-equivariant
family $\smash{\frac 1 \hbar \withhbar{f_\bullet}}$ and let $\Oplhs
(u) = \oplhs(u) \Unit$.
\begin{definition}[\boldmath The algebra $(\defspace, *_\hbar)$]
  For $\hbar \in \mathbb C \setminus (P \cup \lbrace 0 \rbrace)$ we define the 
  algebra $(\defspace, 
  *_\hbar)$ to be the pushforward
  of the algebra $\univ(\lie g) / \ker_{\Oplh}$ to
  $\mathsf{im}(\Oplhs) \subseteq \Pol(\orb\mu)$. We let $(\algebra A_0, *_0)$ 
  be the polynomial algebra on the orbit with its commutative product.
\end{definition}
From \autoref{lemma:karabegov:kequivfctsandmomentummaps} we see that
$f_{\bullet,r}$ corresponds to $K$-equivariant maps $\gamma_r: \orb\mu
\to \lie k^*$ and the image of each of these maps is a coadjoint orbit
$\coorbsign_r$.  Denote by $\omega_{\gamma_r}$ the pull-back of
$\omega_{\coorbsign_r}$ via $\gamma_r$.  From now on we assume that
$f_{\bullet,0}$ is \emph{non-degenerate}, meaning that the associated
two-form $\omega_{\gamma_0}$ is non-degenerate.
\begin{theorem}[Karabegov \cite{karabegov:1999a}] \label{theo:karabegov:starproduct} Let $K$ be a
  connected, compact and semisimple Lie group and $\orb{\mu}$ be its
  adjoint orbit through $\mu \in \lie k$.  Fix a $K$-equivariant
  family $\withhbar{f_\bullet} = \sum_j b_j(\hbar) f^{j}_\bullet $,
  where $b_j$ are rational functions of $\hbar$, regular at $0$.
  \begin{theoremlist}
  \item\label{theo:karabegov:starproduct:itemi} Any $u \in \Pol(\orb
    \mu)$ can be written as a finite sum
    \begin{equation}\label{eq:karabegov:polysinimageofl}
      u =\smash{ \sum_j \hbar^{d(j)} a_j(\hbar) \Oplh(u_j)} \glkomma
    \end{equation}
    with  rational functions $a_j$, regular at $0$ (and depending on $u$), and 
    $u_j \in 
    \univdeg{d(j)}(\lie g)$. Here $\univdeg d (\lie g)$
    denotes the filtration of $\univ(\lie g)$, i.e. $\univdeg d (\lie g)$
    is spanned by products of at most $d$ elements from $\lie g$.
      \item\label{theo:karabegov:starproduct:itemii} Any function $u \in
    \Pol(\orb \mu)$ is an element of the algebra $(\defspace,
    *_\hbar)$ for all but a finite number of values of $\hbar$.
  \item\label{theo:karabegov:starproduct:itemiii} For all but a
    countable number of values of $\hbar$, the elements of the
    algebras $\algebra A_\hbar$ and $\Pol(\orb \mu)$ coincide.
  \item\label{theo:karabegov:starproduct:itemiv} If $u,v \in
    \defspace$ and $u$ is written as in
    \eqref{eq:karabegov:polysinimageofl} then
    \begin{equation} \label{eq:karabegov2:starProductDependingOnHbar}
      u *_\hbar v = \sum_j \hbar^{d(j)} a_j(\hbar) \oplh(u_j) v \glpunkt
    \end{equation}
  \item\label{theo:karabegov:starproduct:itemv} For all $x
    \in\orb\mu$, $u, v \in \Pol(\orb\mu)$ the function $\hbar \mapsto
    (u *_\hbar v)(x)$ is rational in $\hbar$ with finitely many poles
    and regular at 0.
  \item\label{theo:karabegov:starproduct:itemvi} $*_\hbar$ satisfies
    \begin{align}
      u *_\hbar v &\to u v \glkomma \label{eq:karabegov:limit:i}
      \\
      \hbar^{-1}(u *_\hbar v - v *_\hbar u) &\to \I\lbrace 
      u, v \rbrace \label{eq:karabegov:limit:ii}
    \end{align}
    for $\hbar \to 0$ (pointwise), where $\lbrace \argument ,
    \argument \rbrace$ is the Poisson bracket corresponding to the
    symplectic form $\omega_{\gamma_0}$.
  \end{theoremlist}
\end{theorem}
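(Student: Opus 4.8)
The plan is to treat the six items in three groups: the generation statements \emph{i.)}--\emph{iii.)}, the algebraic product formula \emph{iv.)}, and the semiclassical statements \emph{v.)}--\emph{vi.)}. Throughout I write $\mathcal{O}$ for the local ring of rational functions of $\hbar$ that are regular at $\hbar = 0$, and abbreviate $g_X := \withhbar{f_X}$ and $p_X := f_{X,0} = \langle\gamma_0(\argument),X\rangle$. The engine of the whole argument is a symbol computation. Since $\oplhs$ is associated to the family $\frac{1}{\hbar}\withhbar{f_\bullet}$, \autoref{theo:karabegov:repsfromkequivfamily} gives $\oplh(X) = \xi_X - \frac{\I}{\hbar}g_X$, so that $\oplh(X)\Unit = -\frac{\I}{\hbar}g_X$ (the holomorphic field $\xi_X$ annihilates $\Unit$). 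Expanding a PBW monomial $\oplh(X_1)\cdots\oplh(X_d)\Unit$ and collecting powers of $\hbar^{-1}$, the top term comes from taking the multiplication part in every factor, whence
\begin{equation*}
  \sigma_d(X_1\cdots X_d) := \lim_{\hbar\to 0}\hbar^{d}\Oplh(X_1\cdots X_d) = (-\I)^{d}\,p_{X_1}\cdots p_{X_d} \glkomma
\end{equation*}
the derivative contributions being of strictly lower order in $\hbar^{-1}$.

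For \emph{i.)} I would fix $d$, regard the finite-dimensional space $V_d := \Pol^{\le d}(\orb\mu)$, and observe that each weighted element $\hbar^{\deg u}\Oplh(u)$, for $u$ a PBW monomial of degree at most $d$, lies in the free $\mathcal{O}$-module $V_d\otimes_{\mathbb C}\mathcal{O}$ (the prefactor cancels the $\hbar^{-\deg u}$ pole, and the polynomial degree does not increase). Reducing modulo $\hbar$ returns the symbols $(-\I)^{\deg u}p_{X_1}\cdots$, which span $V_d$ precisely because non-degeneracy of $\omega_{\gamma_0}$ forces the component functions $p_X$ to generate $\Pol(\orb\mu)$ (the equivariant map $\gamma_0$ then realises $\orb\mu$ as the coadjoint orbit $\coorbsign_0$, so polynomials on $\orb\mu$ are generated by the linear functions $p_X$). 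Nakayama's lemma over $\mathcal{O}$ upgrades this to generation over $\mathcal{O}$, which is exactly the claimed expansion $u = \sum_j \hbar^{d(j)}a_j(\hbar)\Oplh(u_j)$ with $a_j\in\mathcal{O}$ and $d(j) = \deg u_j$. Items \emph{ii.)} and \emph{iii.)} then follow formally: for fixed $u$ the coefficients $a_j$ have only finitely many poles, so $u = \Oplh\big(\sum_j \hbar^{d(j)}a_j(\hbar)u_j\big)\in\mathsf{im}(\Oplhs) = \defspace$ off a finite set, giving \emph{ii.)}; running this over a countable basis of $\Pol(\orb\mu)$ and discarding the countable union of exceptional sets gives \emph{iii.)}.

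For \emph{iv.)}, by definition $*_\hbar$ is the pushforward of the product of $\univ(\lie g)/\ker_{\Oplh}$, that is $\Oplh(\tilde u)*_\hbar\Oplh(\tilde v) = \Oplh(\tilde u\tilde v)$, and the homomorphism property of $\oplhs$ yields the factorization $\Oplh(\tilde u\tilde v) = \oplh(\tilde u)\oplh(\tilde v)\Unit = \oplh(\tilde u)\Oplh(\tilde v)$. Writing $u$ as in \emph{i.)} means $u = \Oplh(\tilde u)$ with $\tilde u = \sum_j \hbar^{d(j)}a_j(\hbar)u_j$, so $u*_\hbar v = \oplh(\tilde u)v = \sum_j \hbar^{d(j)}a_j(\hbar)\oplh(u_j)v$. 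Well-definedness, i.e.\ independence of the chosen $\tilde u$, is exactly where the two-sided ideal property of $\ker_{\Oplh}$ from \autoref{lemma:karabegovII:starProdukt:RisLwithAntipode} enters: if $\Oplh(\tilde u) = \Oplh(\tilde u')$ then $w := \tilde u - \tilde u'\in\ker_{\Oplh}$, and $\oplh(w)v = \oplh(w)\Oplh(\tilde v) = \Oplh(w\tilde v) = 0$ since $\ker_{\Oplh}$ is a right ideal.

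Finally, for \emph{v.)} and \emph{vi.)} each $\hbar^{d(j)}\oplh(u_j)$ is a differential operator whose $\hbar^{-d(j)}$ pole at $0$ is cancelled by the prefactor, so $\hbar^{d(j)}\oplh(u_j)v$ is rational in $\hbar$, regular at $0$, with poles only in $\polefamily$; together with $a_j\in\mathcal{O}$ this yields \emph{v.)}. The leading symbol gives $\hbar^{d(j)}\oplh(u_j)v \to \sigma_{d(j)}(u_j)\cdot v$, hence $u*_\hbar v \to \big(\sum_j a_j(0)\sigma_{d(j)}(u_j)\big)v = uv$, which is \eqref{eq:karabegov:limit:i}. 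For \eqref{eq:karabegov:limit:ii} the cleanest route is to use \emph{iv.)} in the form $u*_\hbar v - v*_\hbar u = \Oplh([\tilde u,\tilde v])$ and to test on generators: for $u = g_X$, $v = g_Y$ one has $\Oplh(\I\hbar X) = g_X$, so $[\tilde u,\tilde v] = -\hbar^2[X,Y]$ and
\begin{equation*}
  \hbar^{-1}\big(u*_\hbar v - v*_\hbar u\big) = -\hbar\,\Oplh([X,Y]) = \I\,g_{[X,Y]} \xrightarrow{\ \hbar\to 0\ } \I\,p_{[X,Y]} = \I\,\{p_X,p_Y\} \glkomma
\end{equation*}
the last equality being the momentum-map identity for $\omega_{\gamma_0}$ via \autoref{lemma:karabegov:kequivfctsandmomentummaps}. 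Since the antisymmetrised first-order term of $*_\hbar$ and the Poisson bracket are both biderivations and the $p_X$ generate, this determines \eqref{eq:karabegov:limit:ii} on all of $\Pol(\orb\mu)$. The main obstacle is the input to Nakayama in \emph{i.)} --- verifying that non-degeneracy of $\omega_{\gamma_0}$ genuinely forces the linear functions $p_X$ to generate $\Pol(\orb\mu)$ --- together with pinning down the constant and the correct Poisson structure in \emph{vi.)} through the momentum-map property; everything else is formal once the symbol calculation is in place.
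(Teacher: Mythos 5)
You should first be aware that the paper does not prove this theorem at all: it is quoted verbatim from Karabegov \cite{karabegov:1999a}, so your proposal has to be judged on its own merits rather than against an in-paper argument. Your reductions of \emph{ii.)}--\emph{vi.)} to \emph{i.)} are essentially sound: the symbol computation $\hbar^{d}\Oplh(X_1\cdots X_d)\to(-\I)^{d}p_{X_1}\cdots p_{X_d}$, the use of the two-sided ideal property of $\ker_{\Oplh}$ from \autoref{lemma:karabegovII:starProdukt:RisLwithAntipode} for the well-definedness of \eqref{eq:karabegov2:starProductDependingOnHbar}, and the biderivation argument for \eqref{eq:karabegov:limit:ii} are all correct in outline (with the minor caveat that your test elements $g_X=\withhbar{f_X}$ in \emph{vi.)} depend on $\hbar$, so you should test on the fixed polynomials $p_X=f_{X,0}$ and control the $O(\hbar)$ discrepancy, which again feeds back into \emph{i.)}).

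The genuine gap is the Nakayama step in \emph{i.)}. You take $M=V_d\otimes_{\mathbb C}\mathcal O$ with $V_d=\Pol^{\le d}(\orb\mu)$ and the generators $\hbar^{\deg w}\Oplh(w)$ for $\deg w\le d$, and two things go wrong. First, these generators do not lie in $V_d\otimes\mathcal O$: expanding $\oplh(X_1)\cdots\oplh(X_d)\Unit$, the subleading terms in $\hbar^{-1}$ involve the higher Taylor coefficients $f_{\bullet,r}$, $r\ge1$, of the family (arbitrary $K$-equivariant families, of no controlled polynomial degree) as well as iterated $\xi$-derivatives; your parenthetical claim that ``the polynomial degree does not increase'' is false even in the best case --- the paper's own formula $\xi_W\toFunction Z=\frac{\I}{\lambda}\toFunction W\toFunction Z+\text{lower order}$ on the $2$-sphere shows that $\xi_W$ \emph{raises} degree. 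Moreover the $p_X$ are in general not linear: non-degeneracy makes $\gamma_0$ a $K$-equivariant diffeomorphism onto $\coorbsign_0$, which is enough for the $p_X$ to generate $\Pol(\orb\mu)$ as an algebra, but $\gamma_0$ need not be the restriction of a linear map, so products of at most $d$ of the $p_X$ span some space $W_d$ that is not $\Pol^{\le d}(\orb\mu)$. Second, and fatally, once you enlarge $M$ to a finite-dimensional space $V'\otimes\mathcal O$ actually containing the generators, their reductions modulo $\hbar$ span only $W_d\subsetneq V'$, so the Nakayama hypothesis $N+\hbar M=M$ fails and nothing can be concluded; raising $d$ enlarges $V'$ again and the chain need not stabilize, and $\mathcal O$ is not $\hbar$-adically complete so you cannot iterate the correction either. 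The repair has to exploit the $K$-equivariance you never use: $\Pol(\orb\mu)$ has finite $K$-multiplicities, each isotypic component $V_\pi$ is finite-dimensional and preserved by the relevant projections, and $\Oplh$ intertwines the adjoint action with translations, so the projected generators $\Pi_\pi\bigl(\hbar^{\deg w}\Oplh(w)\bigr)=\hbar^{\deg w}\Oplh(\Pi_\pi w)$ taken over \emph{all} degrees $w$ generate a submodule of $V_\pi\otimes\mathcal O$ whose reduction is all of $V_\pi$ (because the symbols generate $\Pol(\orb\mu)$ as an algebra); Nakayama then closes on $V_\pi\otimes\mathcal O$ and yields \eqref{eq:karabegov:polysinimageofl} with $\Pi_\pi w\in\univdeg{\deg w}(\lie g)$. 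Without some finite-dimensional $K$-invariant trap of this kind, item \emph{i.)} --- and with it everything downstream --- is not established.
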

Note that \eqref{eq:karabegov:polysinimageofl} and
\eqref{eq:karabegov2:starProductDependingOnHbar} only hold for $\hbar$
different from the set of poles $P$.
Let us discuss some properties of $*_\hbar$.
\begin{corollary}
  \begin{theoremlist}
  \item $*_\hbar$ is $K$-invariant, i.e. $k \acts(u *_\hbar v) = (k
    \acts u) *_\hbar (k \acts v)$ for all $k \in K$.
  \item For every value of $\hbar$ the operator $*_\hbar$ is
    differential on $\algebra A_\hbar$ if any of its arguments is
    fixed.
  \item For every value of $\hbar$ the star product $*_\hbar$ derives
    the first argument only in antiholomorphic and the second argument
    only in holomorphic directions with respect to the chosen complex
    structure $J$.
  \item $*_\hbar$ is of anti-Wick type with respect to the holomorphic
    structure $J$ on $\orb{\mu}$ (respectively, of Wick type with
    respect to $J_{\textnormal{Kähler}}$ if we choose $J$ opposite to
    $J_{\textnormal{Kähler}}$).
  \item For every value of $\hbar$ and $u \in \algebra A_\hbar$ we
    have $\overline u \in \algebra A_{\overline \hbar}$. In particular
    for $\hbar \in \mathbb R \setminus P$, $\algebra A_\hbar$ is closed under
    complex conjugation.
  \item For every value of $\hbar$ the star product $*_\hbar$ is
    Hermitian, meaning that $\overline{u *_\hbar
      v} = \overline v *_{\overline \hbar} \overline u$.
  \end{theoremlist}
\end{corollary}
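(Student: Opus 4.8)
The plan is to deduce all six statements from the explicit product formula \eqref{eq:karabegov2:starProductDependingOnHbar}, the $K$-equivariance of the representations in \autoref{theo:karabegov:repsfromkequivfamily}, the antipode relation $\Opl = \Opr \circ S$ of \autoref{lemma:karabegovII:starProdukt:RisLwithAntipode}, and the reality properties $\overline{\xi_X} = \eta_X$ and $\overline{f_X} = f_X$ of the building blocks. For item \textit{i.)}, I first note that since $\Unit$ is $K$-invariant and $\oplh$ is $K$-equivariant one has $k \acts \Oplh(u) = \oplh(\Ad_k u)(k \acts \Unit) = \Oplh(\Ad_k u)$. Writing $u$ as in \eqref{eq:karabegov:polysinimageofl}, this shows $k \acts u = \sum_j \hbar^{d(j)} a_j(\hbar)\Oplh(\Ad_k u_j)$, which is again of the required shape. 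Applying $k \acts (\argument)$ to \eqref{eq:karabegov2:starProductDependingOnHbar} and using equivariance of $\oplh$ once more yields $k \acts (u *_\hbar v) = \sum_j \hbar^{d(j)} a_j(\hbar)\oplh(\Ad_k u_j)(k \acts v) = (k \acts u) *_\hbar (k \acts v)$.

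The pivotal step for the remaining items is a right-handed companion to \eqref{eq:karabegov2:starProductDependingOnHbar}. Since $\Oprh = \Oplh \circ S$ with $S$ a filtration-preserving linear bijection, any $v \in \defspace$ can also be written as $v = \sum_k \hbar^{d(k)} b_k(\hbar)\Oprh(v_k)$, and because $\oplh$ and $\oprh$ commute I obtain
\[
  u *_\hbar v = \sum_k \hbar^{d(k)} b_k(\hbar)\,\oprh(v_k)\,u
\]
by inserting this expansion of $v$ into \eqref{eq:karabegov2:starProductDependingOnHbar} and moving each $\oprh(v_k)$ past every $\oplh(u_j)$ before re-summing. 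Granting this, item \textit{ii.)} is immediate: with $u$ fixed the map $v \mapsto u *_\hbar v$ is a finite sum of the differential operators $\oplh(u_j)$, and with $v$ fixed the map $u \mapsto u *_\hbar v$ is a finite sum of the $\oprh(v_k)$. For item \textit{iii.)} recall that $\oplh(X) = \xi_X - \I \hbar^{-1}\withhbar{f_X}$ with $\xi_X$ holomorphic and $\oprh(X) = \eta_X + \I \hbar^{-1}\withhbar{f_X}$ with $\eta_X$ antiholomorphic; hence the second argument is differentiated only in holomorphic and, by the right-handed formula, the first only in antiholomorphic directions. Item \textit{iv.)} is then merely a reformulation of \textit{iii.)} in the terminology of the introduction, the passage to the opposite complex structure $J_{\textnormal{Kähler}}$ interchanging the holomorphic and antiholomorphic directions.

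For items \textit{v.)} and \textit{vi.)} the key computation is the conjugation rule $\overline{\oplh(X)\,g} = \opr^{(\overline\hbar)}(X)\,\overline g$, where $\opr^{(\overline\hbar)}$ is the right representation formed with parameter $\overline\hbar$; I would verify it from $\overline{\xi_X} = \eta_X$, the reality of $f_X$, and the identity $\overline{\withhbar{f_X}} = f_X^{(\overline\hbar)}$ (which holds once the coefficients $b_j$ are real). Extending this antilinearly to $\univ(\lie g)$ with the $*$-involution fixing the real form $\lie k$, i.e. $X^* = -X$ for $X \in \lie k$, gives $\overline{\Oplh(u)} = \Opr^{(\overline\hbar)}(u^*)$. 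Since $\Opr^{(\overline\hbar)} = \Opl^{(\overline\hbar)} \circ S$, the right-hand side lies in the image of $\Opl^{(\overline\hbar)}$, so $\overline u \in \algebra A_{\overline\hbar}$; for real $\hbar \notin \polefamily$ (with $\polefamily = \overline{\polefamily}$ by reality of the $b_j$) this is closedness under conjugation, proving \textit{v.)}. Finally, conjugating \eqref{eq:karabegov2:starProductDependingOnHbar} and applying the rule yields $\overline{u *_\hbar v} = \sum_j \overline\hbar^{d(j)}\,\overline{a_j(\hbar)}\,\opr^{(\overline\hbar)}(u_j^*)\,\overline v$, which by the right-handed formula applied to $\overline u = \sum_j \overline\hbar^{d(j)}\,\overline{a_j(\hbar)}\,\Opr^{(\overline\hbar)}(u_j^*)$ equals $\overline v *_{\overline\hbar}\overline u$, establishing the Hermiticity \textit{vi.)}.

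I expect the main obstacle to be the two structural inputs rather than the formal bookkeeping: first, showing that $\oplh$ and $\oprh$ commute on all of $\univ(\lie g)$, not merely on $\lie k$, which underlies the right-handed product formula and hence items \textit{ii.)}--\textit{iv.)} and \textit{vi.)}; and second, tracking complex conjugation simultaneously against the antipode $S$, the rational dependence on $\hbar$, and the $*$-involution on the enveloping algebra, which is what makes the conjugation rule and thereby items \textit{v.)} and \textit{vi.)} delicate. Once these are secured, the rest are formal consequences of the explicit formulas.
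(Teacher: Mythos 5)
The paper states this corollary without proof: it appears immediately after \autoref{theo:karabegov:starproduct} and the justification is implicitly deferred to Karabegov's construction. There is therefore no in-text argument to compare yours against, but your derivation is essentially complete and is the route one would expect. The one substantive device you add is the right-handed companion of \eqref{eq:karabegov2:starProductDependingOnHbar}: writing $v = \sum_k \hbar^{e(k)} b_k(\hbar)\, \Oprh(v_k)$ via $\Opl(w) = \Opr(Sw)$ and moving each $\oprh(v_k)$ past the $\oplh(u_j)$ gives $u *_\hbar v = \sum_k \hbar^{e(k)} b_k(\hbar)\, \oprh(v_k)\, u$, from which \textit{ii.)}--\textit{iv.)} and \textit{vi.)} follow; this is correct. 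Note, however, that the first of your two announced ``obstacles'' is not one: $\opl$ and $\opr$ extend to algebra homomorphisms of $\univ(\lie g)$ and commute on the generators $\lie k$ by \autoref{theo:karabegov:repsfromkequivfamily}, so their images commute on all of $\univ(\lie g)$ automatically.

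There is one bookkeeping slip in \textit{v.)} and \textit{vi.)}. From $\overline{\oplh(X) g} = \opr^{(\overline{\hbar})}(X)\, \overline{g}$ for $X \in \lie k$ one gets, for $u = X_1 \cdots X_n$ with $X_i \in \lie k$, $\overline{\oplh(u) g} = \opr^{(\overline{\hbar})}(X_1) \cdots \opr^{(\overline{\hbar})}(X_n)\, \overline{g}$ with no reversal of order and no signs. The correct extension is therefore $\overline{\Oplh(u)} = \Opr^{(\overline{\hbar})}(c(u))$ with $c$ the antilinear \emph{automorphism} of $\univ(\lie g)$ fixing $\lie k$ pointwise, not $\Opr^{(\overline{\hbar})}(u^*)$ with the antilinear anti-automorphism $X^* = -X$; the two differ by the antipode, $u^* = S(c(u))$, and since $\Opr(Sw) = \Opl(w) \neq \Opr(w)$ in general your rule is off as stated. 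This does not damage the conclusions: for \textit{v.)} either version lies in $\mathsf{im}(\Opl^{(\overline{\hbar})}) = \algebra A_{\overline{\hbar}}$ because $S$ and $c$ preserve the filtration, and for \textit{vi.)} you apply the same rule consistently on both sides, so replacing $u^*$ by $c(u)$ throughout repairs the argument verbatim. You should also make explicit the standing assumption, which you mention only in passing, that the rational functions $b_j$ defining the family have real coefficients, so that $\overline{f^{(\hbar)}_X} = f^{(\overline{\hbar})}_X$ and $\overline{\polefamily} = \polefamily$; this is needed for the ``in particular'' clause of \textit{v.)}.
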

Note that we cannot expect $*_\hbar$ to be a bidifferential operator
since it will usually contain derivatives of arbitrary high order if
no argument is fixed.
Let us consider the asymptotic expansion of the star product, which
can be obtained from
\eqref{eq:karabegov2:starProductDependingOnHbar}. Since by
\autoref{theo:karabegov:starproduct}
\refitem{theo:karabegov:starproduct:itemv} $u*_\hbar v(x)$ is a
rational function of $\hbar$ with no pole at zero it has an absolutely
and uniformly convergent Taylor series in a small enough neighbourhood
of $\hbar = 0$. We set
\begin{equation}
  \label{eq:taylorseries}
  u \star v = \sum_{r=0}^\infty \frac 1 {r!} \formParam^r \frac {\D^r} {\D 
  \hbar^r}\At{\hbar = 0} u *_\hbar v
\end{equation}
for $u, v \in \Pol(\orb\mu)$. 
From \eqref{eq:karabegov2:starProductDependingOnHbar} we get that in
order $r$ for all $u \in \Pol(\orb{\mu})$, $u \star {}\cdot{}$ is a
differential operator of order at most $r$ and ${}\cdot{} \star u$ is
also a differential operator of order at most $r$.

As a direct consequence of the previous corollary we can prove the
following statement.
\begin{corollary}
  The asymptotic expansion $\star$ of $*_\hbar$ is a $K$-invariant,
  differential, natural, Hermitian formal star
  product of anti-Wick type with respect to $J$.
\end{corollary}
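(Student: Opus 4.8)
The plan is to read $\star$ off from its coefficients: writing
\begin{equation*}
  C_r(u,v) := \frac{1}{r!} \frac{\D^r}{\D\hbar^r}\at{\hbar = 0}(u *_\hbar v) \glkomma
\end{equation*}
so that $u \star v = \sum_{r \geq 0} \formParam^r C_r(u,v)$ by \eqref{eq:taylorseries}, the strategy is to transport every structural property of the family $*_\hbar$ collected in the previous corollary to each individual coefficient $C_r$, and then to establish the three remaining defining features of a formal star product (bilinearity, correct classical limit, associativity) from the pointwise-in-$\hbar$ identities. The mechanism throughout is that $\frac{\D^r}{\D\hbar^r}\at{\hbar=0}$ commutes with any operation not itself involving $\hbar$, so that an identity valid for $*_\hbar$ at all but finitely many $\hbar$ descends, coefficient by coefficient, to $\star$. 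Note also that $C_r(u,v) \in \Pol(\orb\mu)$, since $u *_\hbar v$ lies in $\Pol(\orb\mu)$ and differentiation in $\hbar$ preserves this.

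First I would dispose of the properties that already hold $\hbar$-wise. The shift action $k \acts \argument$ is a fixed pullback independent of $\hbar$, so applying $\frac{\D^r}{\D\hbar^r}\at{\hbar=0}$ to $k \acts (u *_\hbar v) = (k \acts u) *_\hbar (k \acts v)$ yields $k \acts C_r(u,v) = C_r(k \acts u, k \acts v)$; hence $\star$ is $K$-invariant. Differentiality and naturality are contained in the paragraph following \eqref{eq:taylorseries}: since $u *_\hbar \argument$ and $\argument *_\hbar u$ are, for fixed argument, differential operators with coefficients rational in $\hbar$ and regular at $0$, each $C_r$ is a bidifferential operator (so $\star$ is differential) of order at most $r$ in each entry (so $\star$ is natural). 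The separation of variables passes to each $C_r$ for the same reason—differentiation in $\hbar$ leaves the orbit-variable derivative structure untouched—so $\star$ derives its first argument only antiholomorphically and its second only holomorphically, i.e.\ it is of anti-Wick type with respect to $J$. Hermiticity follows by matching power series: from $\overline{u *_\hbar v} = \overline v *_{\overline\hbar} \overline u$ one reads off, after replacing $\overline\hbar$ by an independent variable near $0$ and comparing Taylor coefficients, that $\overline{C_r(u,v)} = C_r(\overline v, \overline u)$, which is precisely $\overline{u \star v} = \overline v \star \overline u$.

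It remains to check that $\star$ is a formal star product. Bilinearity of each $*_\hbar$ over $\mathbb C$ gives $\mathbb C[[\formParam]]$-bilinearity of $\star$, and $\Unit$ remains a unit since it is one for every $*_\hbar$. The classical limit is immediate from \autoref{theo:karabegov:starproduct}: \eqref{eq:karabegov:limit:i} gives $C_0(u,v) = uv$, and \eqref{eq:karabegov:limit:ii} gives $C_1(u,v) - C_1(v,u) = \I\{u,v\}$. The one genuinely non-formal point—and the step I expect to be the main obstacle—is associativity. Here I would argue that for fixed polynomials the triple products $\hbar \mapsto (u *_\hbar v) *_\hbar w$ and $\hbar \mapsto u *_\hbar (v *_\hbar w)$ are rational in $\hbar$, regular at $0$, and agree for all but finitely many $\hbar$, hence coincide as rational functions and have identical Taylor coefficients at $0$. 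The subtlety is to identify these coefficients with the formal composites $\sum_{r+s=n} C_r(C_s(u,v),w)$: one truncates $u *_\hbar v = \sum_{s} \hbar^s C_s(u,v)$ at order $n$, applies $\argument *_\hbar w$ by bilinearity, and must show that the tail, which vanishes to order $n+1$ in $\hbar$, still contributes nothing after $*_\hbar w$. This uses that $\argument *_\hbar w$ is a differential operator in the orbit variables with coefficients regular at $\hbar = 0$, so it cannot lower the order of vanishing in $\hbar$; the same truncation handles the other bracketing, and matching $\formParam^n$-coefficients then yields associativity of $\star$.
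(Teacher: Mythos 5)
Your proposal is correct and follows exactly the route the paper intends: the paper states the corollary as a direct consequence of the preceding list of properties of $*_\hbar$ together with the order estimate after \eqref{eq:taylorseries}, and your argument simply makes explicit how each property descends to the Taylor coefficients $C_r$. The only point the paper leaves entirely implicit is associativity, and your treatment of it---two rational functions regular at $0$ agreeing off a countable set must coincide, with the truncation justified because $\argument *_\hbar w$ cannot lower the order of vanishing in $\hbar$---is the standard argument that the construction relies on.
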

%
Formal star products of Wick type on a symplectic manifold $(M,
\omega_0)$ are classified by formal closed $(1,1)$-forms, see
\cite{karabegov:1996a, neumaier:2003a}. Indeed for any formal closed
$(1,1)$-form $\omega = \sum_{r=0}^\infty z^r \omega_r$, there is a
unique star product of Wick type on $(M, \omega_0)$ such that for all
$f \in \Cinfty_{\mathbb C}(M)$, $f\at U \mathop{\star\at U} \cdot$ commutes with
all holomorphic functions and the operators $\frac{\partial
  \Phi}{\partial z^k}+ \nu \frac{\partial}{\partial z^k}$ and such
that $\cdot \mathop{\star\at U} f\at U$ commutes with all
antiholomorphic functions and the operators
$\frac{\partial\Phi}{\partial \overline z^k}+ \nu \frac \partial
{\partial \overline z^k}$. Here $U$ is any holomorphic chart with
local coordinates $z^1, \dots, z^n$ and $\Phi$ is a potential for
$\omega$ on $U$. Vice versa, the star product determines $\omega$ with
these properties uniquely.
\begin{theorem}[Karabegov \cite{karabegov:1999a}]
  \label{theorem:karabegovII:classification}
  Let $\orb\mu$ be an adjoint orbit of a compact connected semisimple Lie group 
  and let $J$ be the opposite complex structure of
  $J_{\textnormal{Kähler}}$. The formal Wick type star product
  $\star$ associated to a $K$-equivariant family
  $\smash{\withhbar{f_\bullet}}$ with formal expansion $\sum_{r=0}^\infty z^r
  f_{\bullet,r}$ around $\hbar = 0$ has Karabegov form $\omega =
  \sum_{r=0}^\infty z^r \omega_{\gamma_r}$, where $\omega_{\gamma_r} $
  is the pullback of the KKS symplectic form from $\gamma_r(\orb\mu)$ to 
  $\orb\mu$ via
  the map $\gamma_r$ corresponding to $f_{\bullet,r}$.  In
  particular it deforms the symplectic manifold $(\orb \mu,
  \omega_{\gamma_0})$.
\end{theorem}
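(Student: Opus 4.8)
The plan is to verify, for the candidate $\omega = \sum_{r=0}^\infty \formParam^r \omega_{\gamma_r}$, the characterisation of the Karabegov form recalled just above, and then to appeal to its uniqueness. By the preceding corollary the asymptotic expansion $\star$ is already a differential star product of Wick type with respect to $J_{\textnormal{Kähler}}$, so it has a well-defined Karabegov form $\sum_r \formParam^r \omega_r$ in the sense of \cite{karabegov:1996a, neumaier:2003a}, and only the identification $\omega_r = \omega_{\gamma_r}$ remains. As a preliminary, each $\omega_{\gamma_r} = \gamma_r^* \omega_{\coorbsign_r}$ is a closed real two-form, being the pullback of the closed KKS form, and the $K$-invariance of $J_{\textnormal{Kähler}}$ together with the $K$-equivariance of $\gamma_r$ makes it of type $(1,1)$; hence on any holomorphic chart $U$ with coordinates $z^1, \dots, z^n$ it admits a formal Kähler potential $\Phi = \sum_r \formParam^r \Phi_r$ with $\omega_{\gamma_r} \at{U} = \I \partial \overline\partial \Phi_r$, the normalisation of the potential fixing the constants below.

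The structural input is \autoref{theo:karabegov:repsfromkequivfamily}: left star multiplications are assembled from $\oplh$ and right star multiplications from $\oprh$, and these two representations commute. Indeed, writing $u$ as in \eqref{eq:karabegov:polysinimageofl}, the left multiplication operator $u *_\hbar \argument$ equals $\sum_j \hbar^{d(j)} a_j(\hbar)\, \oplh(u_j)$ by \eqref{eq:karabegov2:starProductDependingOnHbar} and therefore lies in the commutant of $\oprh(\lie k)$. Passing to the asymptotic expansion, every left multiplication by $\star$ commutes with each generator $\oprh_X = \eta_X + \tfrac{\I}{\hbar} \withhbar{f_X}$. Moreover, for a product of Wick type the right multiplication by a $J_{\textnormal{Kähler}}$-holomorphic function is pointwise, so associativity forces left multiplications to commute with multiplication by holomorphic functions as well; this is the first half of the characterisation, and the complex-conjugate symmetry $\opr_X = \overline{\opl_X}$ together with the Hermiticity of $\star$ gives the corresponding statement for antiholomorphic functions and right multiplications.

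The heart of the matter is to recognise the $\oprh_X$ as the characterising operators. Since $J_{\textnormal{Kähler}} = -J$ turns the $J$-antiholomorphic fields $\eta_X$ into $J_{\textnormal{Kähler}}$-holomorphic ones, I write $\eta_X = \sum_k a_X^k \partial_{z^k}$ with holomorphic coefficients $a_X^k$. Clearing the denominator and substituting $\hbar$ by $\formParam$, one finds formally
\begin{equation*}
  \formParam\, \oprh_X = \formParam\, \eta_X + \I \withhbar{f_X} = \sum_k a_X^k \left( \formParam \frac{\partial}{\partial z^k} + \frac{\partial \Phi}{\partial z^k} \right),
\end{equation*}
provided the potential obeys $\eta_X \Phi_r = \I f_{X,r}$ in every order $r$, where $f_{X,r} := \langle \gamma_r, X \rangle$. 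This last identity is exactly the moment-map relation: by \autoref{lemma:karabegov:kequivfctsandmomentummaps} the family $f_{\bullet,r}$ corresponds to $\gamma_r$, which is the moment map of $\omega_{\gamma_r}$, so $\iota_{v_X} \omega_{\gamma_r} = \D f_{X,r}$; taking the $(1,0)$-component with respect to $J_{\textnormal{Kähler}}$ and using $\omega_{\gamma_r} = \I \partial\overline\partial \Phi_r$ gives $\eta_X \Phi_r = \I f_{X,r}$. Because the fundamental vector fields span the tangent space, the $\eta_X$ span $T^{(1,0)}$ locally and the matrix $(a_X^k)$ can be inverted over a suitable finite set of $X$; as the $a_X^k$ are holomorphic and left multiplications commute with holomorphic functions, the relation $[\, u \star \argument,\, \formParam\, \oprh_X ] = 0$ may be solved to yield $[\, u \star \argument,\, \tfrac{\partial \Phi}{\partial z^k} + \formParam \tfrac{\partial}{\partial z^k} ] = 0$ for every $k$. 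The conjugate argument handles the right multiplications, so the characterisation holds for $\omega = \sum_r \formParam^r \omega_{\gamma_r}$; by uniqueness this is the Karabegov form of $\star$, and its leading term $\omega_{\gamma_0}$ is the deformed symplectic form.

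I expect the main obstacle to be the crux identity $\eta_X \Phi_r = \I f_{X,r}$. Bridging the representation-theoretic functions $f_{X,r}$, defined abstractly through the $K$-equivariant family, with the geometric Kähler potential $\Phi_r$ demands careful handling of the two competing complex structures $J$ and $J_{\textnormal{Kähler}}$ and a consistent bookkeeping of the factors of $\I$, $\hbar$ and $\formParam$ incurred when one replaces the rational-in-$\hbar$ operators $\oprh_X$ by their formal expansion. A secondary technical point is to establish the $(1,1)$-type of $\omega_{\gamma_r}$ and the local existence of $\Phi_r$ cleanly enough that the order-by-order matching is unambiguous.
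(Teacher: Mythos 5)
The paper does not prove this statement at all: it is imported verbatim from Karabegov's work, as the attribution \anfa Karabegov \cite{karabegov:1999a}\anfe{} in the theorem header indicates, and the surrounding text only recalls the characterisation of the Karabegov form that you use. So there is no in-paper proof to compare against; what you have written is a reconstruction of Karabegov's own argument. As such, the architecture is right and matches the intended one: identify left (resp.\ right) $\star$-multiplications with operators in the image of $\oplh$ (resp.\ $\oprh$) via \eqref{eq:karabegov2:starProductDependingOnHbar}, use the commutativity from \autoref{theo:karabegov:repsfromkequivfamily} and the separation-of-variables property to get commutation with holomorphic multiplication operators, convert $\formParam\,\oprhs_X$ into the characterising operators $\frac{\partial\Phi}{\partial z^k}+\formParam\frac{\partial}{\partial z^k}$ by the moment-map relation, and conclude by the uniqueness half of the classification. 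The observations that $\omega_{\gamma_r}$ is of type $(1,1)$ (which follows from $K^\mu$-invariance of $\gamma_r(\mu)$ forcing it to vanish on all root spaces, so that $\omega_{\gamma_r}\at\mu$ pairs only $\lie g_\alpha$ with $\lie g_{-\alpha}$) and that the $\eta_X$ span $T^{(1,0)}$ with holomorphically invertible coefficient matrix are both correct and necessary.

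One step is stated too strongly and deserves repair. The moment-map equation $\iota_{v_X}\omega_{\gamma_r}=\D f_{X,r}$ only determines the $(0,1)$-part $\overline\partial(\eta_X\Phi_r)$, so it yields $\eta_X\Phi_r=\I f_{X,r}+h_{X,r}$ with $h_{X,r}$ holomorphic on $U$ (and there is in general no choice of a single potential $\Phi_r$ making all the $h_{X,r}$ vanish simultaneously, since $\Phi_r$ is one function and the constraints are indexed by $X$). You should say explicitly that this ambiguity is harmless: $\formParam\,\oprhs_X$ then differs from $\sum_k a_X^k\bigl(\formParam\frac{\partial}{\partial z^k}+\frac{\partial\Phi}{\partial z^k}\bigr)$ by multiplication with a holomorphic function, with which left $\star$-multiplications already commute, so the commutation relation you need survives. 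With that caveat added, and the sign and normalisation conventions for $\omega=\I\partial\overline\partial\Phi$ pinned down once and for all (your $\I$'s are currently only fixed \anfa up to convention\anfe), the argument is complete.
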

Note that $\omega_{\gamma_0}$ coincides with the KKS symplectic form
if we choose $\withhbar{f_\bullet}$ such that $f_{X,0}(x)=B(x,X)$
holds for all $X \in \lie k$ and $x \in \orb\mu \subseteq \lie k$.

\subsection{A construction by Alekseev-Lachowska}
\label{sec:alekseev}

The second construction that we recall is due to Alekseev and
Lachowska \cite{alekseev.lachowska:2005a}. The main idea is to use a certain 
pairing
defined below, that is associated to a decomposition of the Lie
algebra, to define an operator depending rationally on $\hbar$. Its
asymptotic expansion gives again an invariant star product of Wick
type. We want to remark here that the construction really builds on
the splitting described below and this splitting is available also in
some nilpotent or infinite dimensional examples, as well as for adjoint orbits 
of non-compact semisimple Lie groups that contain a semisimple element.

Recall that taking left invariant vector fields $X \mapsto
\leftinv{X}$ extends to an isomorphism between the universal
enveloping algebra $\univ(\lie g_{\mathbb C})$ of the complexification $\lie 
g_{\mathbb C}$ of $\lie g$ and the space of
$G$-invariant differential operators $\Diffop^G(G)$ on a Lie group
$G$.

We are interested in the space of $G$-invariant differential operators
on $G/H$, where $H$ is a closed subgroup of $G$. Any $f \in
\Cinfty_{\mathbb C}(G/H)$ can be extended to a function $\smash{\tilde f\in
  \Cinfty_{\mathbb C}(G)}$ that is invariant under right shifts by $H$. The
function $\leftinv{X_1} \cdots \leftinv{X_n} \tilde f$ is right
$H$-invariant again if and only if $X_1 \cdots X_n \in \univ(\lie g_{\mathbb 
C})$
is invariant under the adjoint action of $H$. Since $\leftinv X$ kills
$\tilde f$ if $X \in \lie h$, this can be used to construct an
isomorphism between $(\univ(\lie g_{\mathbb C})/(\univ(\lie g_{\mathbb C}) 
\cdot \lie h_{\mathbb C}))^H$ and the 
space of $G$-invariant differential operators 
$\smash{\Diffop^G(G/H)}$ on a homogeneous space $G/H$. By the
superscript $H$ we mean the elements invariant under the adjoint
action of $H$. Note that the adjoint action of $H$ on $\univ(\lie g_{\mathbb 
C})$
fixes the left ideal $\univ(\lie g_{\mathbb C}) \cdot \lie h_{\mathbb C}$ and 
is therefore
well-defined on the quotient.

A formal $G$-invariant star product on $G / H$ is defined by a series
of $G$-invariant bidifferential operators on $G/H$, i.e.~an element
\begin{equation} 
  \label{eq:alekseev:defB}
  F 
  \in 
  ((\univ(\lie g_{\mathbb C}) /(\univ(\lie g_{\mathbb C}) \cdot \lie h_{\mathbb 
  C}))^{\tensor 2})^H [[\formParam]] 
  \glkomma
\end{equation}
satisfying some further properties that assure associativity and the
usual requirements for zeroth and first order terms. Here the
$H$-action is the diagonal action of $H$ on the tensor product.

We only treat the case of adjoint orbits of compact semisimple connected Lie 
groups here and set our notation
accordingly. Note however, that the following would still make sense for any
grading of $\lie g$ preserved under the adjoint action of $H$.

From now on we let $G:=K$ be a compact connected semisimple Lie group
and $H:=K^\mu$ the stabilizer of some $\mu \in \lie k$, where $\lie k$
and $\lie k^\mu$ denote the Lie algebras of $K$ and $K^\mu$,
respectively. Denote their complexifications by $\lie g$ and $\lie
g^\mu$. Choose a Cartan subalgebra $\lie h$ of $\lie g$ containing
$\mu$, define roots $\Delta$ and subsets $\Delta'$, $\smash{\hat
  \Delta}$ as in \eqref{eq:rootspaces} and choose an ordering on
$\Delta$ such that the induced ordering on $\hat \Delta$ is invariant. Let
$J$ be the corresponding complex structure.  Define a $\mathbb
Z$-grading of $\lie g$ (the graded components being called $\lie g_n$)
in the following way: firstly, let $\lie g_0 = \lie g^\mu$ and
secondly, let all other elements of the root spaces corresponding to
fundamental roots that are not already in $\lie g_0$ have degree 1,
i.e.~$\smash{\lie g_1 = \bigoplus_{\alpha \in \Sigma \setminus
    \Delta'} \lie g_\alpha}$. This gives a well-defined grading since
the ordering of $\hat \Delta$ is invariant.

Define $\alekseevN + = \oplus_{i \geq 1} \lie g_i$ and $\alekseevN - =
\oplus_{i \leq -1} \lie g_i$ and denote the projection to $\lie g_0$
in the direct sum $\lie g = \lie g_0 \oplus \alekseevN + \oplus
\alekseevN -$ by $\pr$. Let $\lambda: \lie g_0 \to \mathbb C$ be a Lie
algebra homomorphism such that the pairing
\begin{equation}
  \alekseevN + \times \alekseevN - \to \mathbb C : \quad \smash{(u,v) \mapsto
	\lambda(\pr([u,v]))}
\end{equation}
is non-degenerate.
We are mainly interested in the case $\lambda = B(-\I \mu,
\cdot)$. Depending on the context we extend $\lambda$ by zero on
$\alekseevN +$ and $\alekseevN -$, still denoted by $\lambda$.
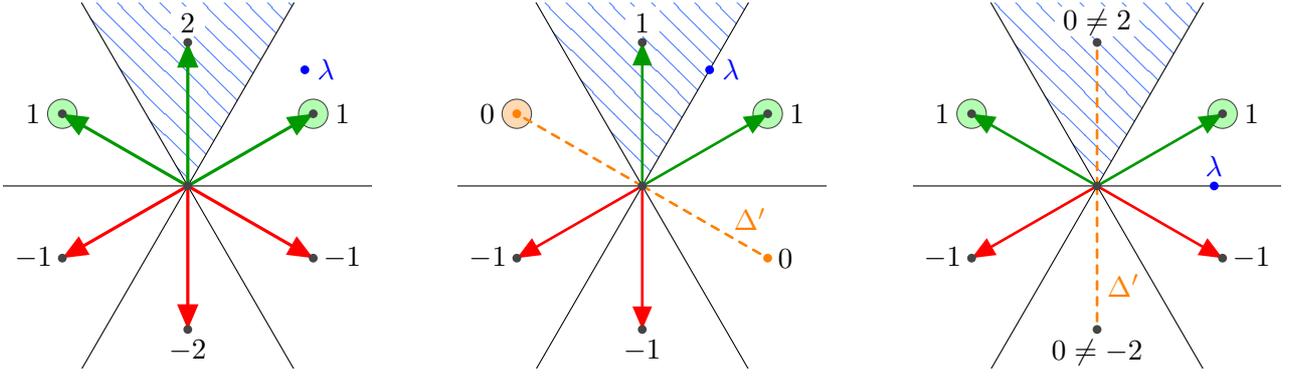
\begin{figure}
	\definecolor{qqzzqq}{rgb}{0,0.6,0}
	\definecolor{qqqqff}{rgb}{0,0,1}
	\definecolor{wwzzff}{rgb}{0.3,0.5,1}
	\definecolor{ffqqqq}{rgb}{1,0,0}
	\definecolor{uuuuuu}{rgb}{0.27,0.27,0.27}
	\begin{tikzpicture}[line cap=round,line join=round,>=triangle 
	45,x=1.0cm,y=1.0cm, scale=1.1]
	\clip(-2.2,-2.2) rectangle (2.2,2.2);
	\fill[pattern=custom north west lines,hatchcolor=wwzzff,hatchspread=8pt] 
	(-4.47,7.74) -- (0,0) -- 
	(4.42,7.66) -- cycle; 
	\draw [domain=-9.1:9.99] plot(\x,{(-0-0*\x)/1});
	\draw [domain=-9.1:9.99] plot(\x,{(-0-0.87*\x)/-0.5});
	\draw [domain=-9.1:9.99] plot(\x,{(-0--0.87*\x)/-0.5});
	\fill [color=green!30] (-1.5,0.87) circle (5pt);
	\fill [color=green!30] (1.5,0.87) circle (5pt);
	\draw [color=uuuuuu] (-1.5,0.87) circle (5pt);
	\draw [color=uuuuuu] (1.5,0.87) circle (5pt);
	\draw [->,line width=1.2pt,color=ffqqqq] (0,0) -- (1.5,-0.87);
	\draw [->,line width=1.2pt,color=ffqqqq] (0,0) -- (0,-1.73);
	\draw [->,line width=1.2pt,color=ffqqqq] (0,0) -- (-1.5,-0.87);
	\draw [->,line width=1.2pt,color=qqzzqq] (0,0) -- (1.5,0.87);
	\draw [->,line width=1.2pt,color=qqzzqq] (0,0) -- (-1.5,0.87);
	\draw [->,line width=1.2pt,color=qqzzqq] (0,0) -- (0,1.73);
	\fill [color=blue] (1.4,1.4) circle (1.5pt);
	\draw [color=blue](1.43,1.65) node[anchor=north west] {$\lambda$};
	\fill[color=white](-0.2,1.81) rectangle (0.2,2.16);
	\draw (-1.5,-0.87) node[anchor=east] {$-1$};
	\draw (1.5,-0.87) node[anchor=west] {$-1$};
	\draw (0,-1.73) node[anchor=north] {$-2$};
	\draw (0,1.73) node[anchor=south] {$2$};	
	\draw (-1.64,0.87) node[anchor=east] {$1$};
	\draw (1.64,0.87) node[anchor=west] {$1$};
	\begin{scriptsize}
	\fill [color=uuuuuu] (0,0) circle (1.5pt);
	\fill [color=uuuuuu] (0,-1.73) circle (1.5pt);
	\fill [color=uuuuuu] (-1.5,-0.87) circle (1.5pt);
	\fill [color=uuuuuu] (-1.5,0.87) circle (1.5pt);
	\fill [color=uuuuuu] (0,1.73) circle (1.5pt);
	\fill [color=uuuuuu] (1.5,0.87) circle (1.5pt);
	\fill [color=uuuuuu] (1.5,-0.87) circle (1.5pt);
	\end{scriptsize}
	\end{tikzpicture}
	\hfill
	\begin{tikzpicture}[line cap=round,line join=round,>=triangle 
	45,x=1.0cm,y=1.0cm, scale=1.1]
	\clip(-2.2,-2.2) rectangle (2.2,2.2);
	\fill[pattern=custom north west lines,hatchcolor=wwzzff,hatchspread=8pt] 
	(-4.47,7.74) -- (0,0) -- 
	(4.42,7.66) -- cycle; 
	\draw [domain=-9.1:9.99] plot(\x,{(-0-0*\x)/1});
	\draw [domain=-9.1:9.99] plot(\x,{(-0-0.87*\x)/-0.5});
	\draw [domain=-9.1:9.99] plot(\x,{(-0--0.87*\x)/-0.5});
	\fill [color=orange!30] (-1.5,0.87) circle (5pt);
	\fill [color=green!30] (1.5,0.87) circle (5pt);
	\draw [color=uuuuuu] (-1.5,0.87) circle (5pt);
	\draw [color=uuuuuu] (1.5,0.87) circle (5pt);
	\draw [line width=1pt,color=orange,dashed] (0,0) -- (1.5,-0.87);
	\draw [->,line width=1pt,color=ffqqqq] (0,0) -- (0,-1.73);
	\draw [->,line width=1pt,color=ffqqqq] (0,0) -- (-1.5,-0.87);
	\draw [->,line width=1pt,color=qqzzqq] (0,0) -- (1.5,0.87);
	\draw [line width=1pt,color=orange,dashed] (0,0) -- (-1.5,0.87);
	\draw [->,line width=1pt,color=qqzzqq] (0,0) -- (0,1.73);
	\fill [color=blue] (0.805,1.4) circle (1.5pt);
	\draw [color=blue](0.835,1.65) node[anchor=north west] {$\lambda$};
	\draw [color=orange](1.6,-0.4) node[anchor=east]{$\Delta'$};
	\fill[color=white](-0.2,1.81) rectangle (0.2,2.16);
	\draw (-1.5,-0.87) node[anchor=east] {$-1$};
	\draw (1.5,-0.87) node[anchor=west] {$0$};
	\draw (0,-1.73) node[anchor=north] {$-1$};
	\draw (0,1.73) node[anchor=south] {$1$};	
	\draw (-1.64,0.87) node[anchor=east] {$0$};
	\draw (1.64,0.87) node[anchor=west] {$1$};
	\begin{scriptsize}
	\fill [color=uuuuuu] (0,0) circle (1.5pt);
	\fill [color=uuuuuu] (0,-1.73) circle (1.5pt);
	\fill [color=uuuuuu] (-1.5,-0.87) circle (1.5pt);
	\fill [color=orange] (-1.5,0.87) circle (1.5pt);
	\fill [color=uuuuuu] (0,1.73) circle (1.5pt);
	\fill [color=uuuuuu] (1.5,0.87) circle (1.5pt);
	\fill [color=orange] (1.5,-0.87) circle (1.5pt);
	\end{scriptsize}
	\end{tikzpicture}
	\hfill
	\begin{tikzpicture}[line cap=round,line join=round,>=triangle 
	45,x=1.0cm,y=1.0cm, scale=1.1]
	\clip(-2.2,-2.2) rectangle (2.2,2.2);
	\fill[pattern=custom north west lines,hatchcolor=wwzzff,hatchspread=8pt] 
	(-4.47,7.74) -- (0,0) -- 
	(4.42,7.66) -- cycle; 
	\draw [domain=-9.1:9.99] plot(\x,{(-0-0*\x)/1});
	\draw [domain=-9.1:9.99] plot(\x,{(-0-0.87*\x)/-0.5});
	\draw [domain=-9.1:9.99] plot(\x,{(-0--0.87*\x)/-0.5});
	\fill [color=green!30] (-1.5,0.87) circle (5pt);
	\fill [color=green!30] (1.5,0.87) circle (5pt);
	\draw [color=uuuuuu] (-1.5,0.87) circle (5pt);
	\draw [color=uuuuuu] (1.5,0.87) circle (5pt);
	\draw [line width=1pt,color=orange,dashed] (0,0) -- (0,-1.73);
	\draw [->,line width=1pt,color=ffqqqq] (0,0) -- (1.5,-0.87);
	\draw [->,line width=1pt,color=ffqqqq] (0,0) -- (-1.5,-0.87);
	\draw [->,line width=1pt,color=qqzzqq] (0,0) -- (1.5,0.87);
	\draw [line width=1pt,color=orange,dashed] (0,0) -- (0,1.73);
	\draw [->,line width=1pt,color=qqzzqq] (0,0) -- (-1.5,0.87);
	\fill [color=blue] (1.4,0) circle (1.5pt);
	\draw [color=blue](1.4,0) node[anchor=south] {$\lambda$};
	\draw [color=orange](0,-1.2) node[anchor=west]{$\Delta'$};
	\fill[color=white](-0.5,1.81) rectangle (0.5,2.16);
	\draw (-1.5,-0.87) node[anchor=east] {$-1$};
	\draw (1.5,-0.87) node[anchor=west] {$-1$};
	\draw (0,-1.73) node[anchor=north] {$0 \neq -2$};
	\draw (0,1.7) node[anchor=south] {$0 \neq 2$};	
	\draw (-1.64,0.87) node[anchor=east] {$1$};
	\draw (1.64,0.87) node[anchor=west] {$1$};
	\begin{scriptsize}
	\fill [color=uuuuuu] (0,0) circle (1.5pt);
	\fill [color=uuuuuu] (0,-1.73) circle (1.5pt);
	\fill [color=uuuuuu] (-1.5,-0.87) circle (1.5pt);
	\fill [color=uuuuuu] (-1.5,0.87) circle (1.5pt);
	\fill [color=uuuuuu] (0,1.73) circle (1.5pt);
	\fill [color=uuuuuu] (1.5,0.87) circle (1.5pt);
	\fill [color=uuuuuu] (1.5,-0.87) circle (1.5pt);
	\end{scriptsize}
	\end{tikzpicture}
	\caption{Illustration of the grading. Fundamental roots are 
		encircled. Positive roots are drawn green and negative ones red if they 
		lie in $\hat{\Delta}$. Roots from 
		$\Delta'$ are drawn with orange dashed lines. The grading is indicated 
		next to each root space. The Cartan 
		subalgebra has always grading $0$. A regular orbit of $\SUN 3$ is shown 
		on the 
		left, the other two pictures are of 
		non-regular orbits, $\lambda = B(-\I \mu, \cdot)$. In the right picture 
		the ordering on $\Delta'$ is 
		not invariant and therefore the grading is not well-defined.}
\end{figure}
For any $\hbar \in \mathbb C \setminus \lbrace 0 \rbrace$ we consider
the pairing (which is related to the Shapovalov pairing of the
corresponding Verma modules)
\begin{equation} 
  \label{eq:alekseev:pairing}
  (\cdot, \cdot)_\hbar 
  : 
  \univ(\alekseevN -) \times \univ(\alekseevN +) 
  \to
  \mathbb C\,,
  \quad
  (y,x) \mapsto (y,x)_\hbar 
  = 
  \evaluatebig{\frac \lambda \hbar}{(S(x)y)_0} 
  \glpunkt 
\end{equation}
Here $(\cdot)_0$ is the projection onto the second summand in
$\univ(\lie g) = (\alekseevN - \univ (\lie g) + \univ (\lie g)
\alekseevN +) \oplus \univ (\lie g_0)$.

It is important to notice that $\univ(\lie g)$ is graded: we can
simply define the grade of an element $X_1 \cdots X_n \in \univ(\lie
g)$ to be the sum of the grades of the $X_i \in \lie g$. This is well
defined, since the ideal generated by $X Y - Y X -[X, Y]$ is
homogeneous.  The graded components of $\univ(\alekseevN -)$ and
$\univ(\alekseevN +)$ are all finite dimensional and the pairing in
\eqref{eq:alekseev:pairing} respects the grading in the sense that for
homogeneous elements $x \in \univ(\alekseevN -)$ and $y \in
\univ(\alekseevN +)$ it is non-zero only if the degrees of $x$ and $y$
add up to zero.  Thus we can choose homogeneous bases of
$\univ(\alekseevN -)$ and $\univ(\alekseevN +)$ such that the pairing
is block diagonal with each block being finite dimensional.  With a
careful analysis of the pairing one can then prove that each block is
invertible with only finitely many exceptions for $\hbar$, see
\cite[Proposition 3.1]{alekseev.lachowska:2005a}.
\begin{proposition}
  The pairing $(\cdot, \cdot)_\hbar$ is non-singular for almost all
  $\hbar \in \mathbb C \setminus \lbrace 0 \rbrace$.
\end{proposition}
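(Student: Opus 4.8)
The plan is to exploit the block-diagonal structure of $(\cdot,\cdot)_\hbar$ just described and reduce the statement to the invertibility of countably many finite-dimensional matrices, each depending polynomially on $1/\hbar$. First I would make the block decomposition precise. Since the pairing vanishes unless the degrees of its two arguments add up to zero, I choose homogeneous bases $\{x_i^{(d)}\}$ of the degree-$d$ part of $\univ(\alekseevN{+})$ and $\{y_j^{(d)}\}$ of the degree-$(-d)$ part of $\univ(\alekseevN{-})$ (both finite dimensional and, since $\dim \lie g_i = \dim \lie g_{-i}$, of equal dimension). This writes $(\cdot,\cdot)_\hbar$ as a direct sum over $d \geq 0$ of finite square blocks $M^{(d)}(\hbar) = \big((y_j^{(d)}, x_i^{(d)})_\hbar\big)_{i,j}$, and the pairing is non-singular exactly when every block is invertible. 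So it suffices to control the zeros of $\det M^{(d)}(\hbar)$.

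Second, I would record the $\hbar$-dependence of the entries. The elements $(S(x_i)y_j)_0 \in \univ(\lie g_0)$ are independent of $\hbar$, and $\lambda$, being a Lie algebra homomorphism $\lie g_0 \to \mathbb C$, extends to the character $\langle \lambda, X_1 \cdots X_n\rangle = \lambda(X_1)\cdots\lambda(X_n)$ on $\univ(\lie g_0)$. Hence evaluating $\langle \lambda/\hbar, \cdot\rangle$ on a PBW monomial of length $k$ produces $\hbar^{-k}$ times a constant, so each entry — and therefore $\det M^{(d)}(\hbar)$ — is a polynomial in $1/\hbar$, in particular a function regular on all of $\mathbb C \setminus \{0\}$.

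Third, and this is the crux, I would show that $\det M^{(d)}(\hbar)$ is not identically zero. Here I would expand in powers of $1/\hbar$ and isolate the leading coefficient, i.e.\ the contribution of the longest $\lie g_0$-monomials, which arise when all generators making up $x_i$ and $y_j$ are contracted pairwise through the projection $(\cdot)_0$. This top-order term is, up to a nonzero scalar, the Gram determinant of the iterated form built from the basic pairing $\alekseevN{+} \times \alekseevN{-} \to \mathbb C$, $(u,v) \mapsto \lambda(\pr([u,v]))$, which is non-degenerate by hypothesis; consequently the leading coefficient is nonzero and $\det M^{(d)} \not\equiv 0$. This is precisely the careful analysis of \cite[Proposition 3.1]{alekseev.lachowska:2005a}, and I expect it to be the main obstacle, since it is where the non-degeneracy assumption on $\lambda(\pr([\cdot,\cdot]))$ is genuinely used and where one must control the mixing of PBW lengths under reordering.

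Finally, a nonzero polynomial in $1/\hbar$ has only finitely many zeros in $\mathbb C \setminus \{0\}$, so each block $M^{(d)}(\hbar)$ is invertible outside a finite set $P_d$. Since there are countably many degrees $d$, the union $\bigcup_d P_d$ is at most countable, and $(\cdot,\cdot)_\hbar$ is non-singular for every $\hbar \in \mathbb C \setminus \{0\}$ outside this countable set, which is exactly the meaning of \emph{almost all} here.
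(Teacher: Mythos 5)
Your proposal follows essentially the same route as the paper: decompose the pairing into finite-dimensional blocks by degree, observe that each block determinant is a polynomial in $1/\hbar$, and defer the crucial non-vanishing of that determinant to the careful analysis of \cite[Proposition 3.1]{alekseev.lachowska:2005a}, exactly as the paper does. Your extra remarks (squareness of the blocks via $\dim\lie g_i = \dim\lie g_{-i}$, the leading-order Gram-determinant heuristic, and the countable-union reading of \emph{almost all}) are consistent with the paper's intent and with the remark following \autoref{theo:starproduct:alekseev}.
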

Here by non-singular we mean that for all $x \in \univ(\alekseevN -)$
there is $y \in \univ(\alekseevN +)$ with $(x,y)_\hbar \neq
0$. Consequently we can choose bases $\lbrace 1, y^\hbar_1, y^\hbar_2,
\dots \rbrace$ of $\univ(\alekseevN -)$ and $\lbrace 1, x^\hbar_1,
x^\hbar_2, \dots \rbrace$ of $\univ(\alekseevN +)$ ordered by
increasing grading for almost all $\hbar \in \mathbb C$, that are dual
to each other with respect to $(\cdot, \cdot)_\hbar$. Then
\begin{equation} 
  \label{eq:alekseev:FLambda}
  F_\hbar = 1\tensor 1+y^\hbar_1 \tensor x^\hbar_1 + y^\hbar_2 \tensor x^\hbar_2 
  + \dots \in 
  \univ(\alekseevN 
  -) \mathop{\hat{\tensor} }
  \univ(\alekseevN +)
\end{equation}
is well-defined, independent of the bases chosen. Note that of the
infinitely many terms appearing in the formula for $F_\hbar$ only
finitely many lie in a certain grade. Finally, in \cite[Theorem
4.9]{alekseev.lachowska:2005a} the authors prove the following result.
\begin{theorem}[Alekseev-Lachowska]\label{theo:starproduct:alekseev}
  Let $\orb\mu$ be an adjoint orbit of a compact connected semisimple
  Lie group $K$. With the notation introduced above
  \begin{theoremlist}
  \item \label{theo:starproduct:alekseev:i} $F_\hbar$ depends rationally on 
  $\hbar$, with no pole at zero.
  \item \label{theo:starproduct:alekseev:ii} The formal Taylor series expansion 
  of $F_{\hbar}$ around $0$
    gives an element $F \in ((\univ(\lie g) /(\univ(\lie g) \cdot \lie
    g^\mu))^{\tensor 2})^{K^\mu} [[\formParam]]$. The elements
    $F_{\hbar}$ and $F$ satisfy an associativity condition.
  \end{theoremlist}
\end{theorem}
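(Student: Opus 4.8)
The plan is to reduce the statement to a finite-dimensional, grade by grade analysis of the pairing \eqref{eq:alekseev:pairing} and then to control its inverse as $\hbar\to 0$. For each $d\ge 0$ I would fix PBW bases $\{y_i\}$ of $\univ(\alekseevN-)_{-d}$ and $\{x_j\}$ of $\univ(\alekseevN+)_{d}$, ordered by increasing PBW length, and form the finite Gram matrix $M_d(\hbar)_{ij}=(y_i,x_j)_\hbar$. Since the pairing respects the grading, the canonical element $F_\hbar$ of \eqref{eq:alekseev:FLambda} decomposes over $d$, its degree-$d$ part being $\sum_{i,j}(M_d(\hbar)^{-1})_{ij}\,y_i\otimes x_j$, independently of the chosen bases. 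As $(S(x_j)y_i)_0\in\univ(\lie g_0)$ is independent of $\hbar$ and $\langle\lambda/\hbar,\,\cdot\,\rangle$ sends a length-$k$ PBW monomial of $\univ(\lie g_0)$ to $\hbar^{-k}$ times its value at $\lambda$, every entry of $M_d(\hbar)$ is a polynomial in $\hbar^{-1}$, hence rational with a possible pole only at $0$. The previous proposition gives $\det M_d(\hbar)\neq 0$ for all but finitely many $\hbar$, so Cramer's rule makes $M_d(\hbar)^{-1}$, and therefore $F_\hbar$, rational in $\hbar$; this proves \refitem{theo:starproduct:alekseev:i} except for the behaviour at $0$.

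The main obstacle is to show that $M_d(\hbar)^{-1}$ has no pole at $\hbar=0$; here the grading alone is not enough, since the PBW length of a homogeneous element is not fixed by its grade. I would track both. If $x_j,y_i$ have PBW lengths $q_j,p_i$, then $(S(x_j)y_i)_0$ is a sum of $\lie g_0$-monomials of length at most $\min(p_i,q_j)$, the maximal-length ones arising from contracting each positive generator with a negative one into $\lie g_0$; hence the top power of $\hbar^{-1}$ in $M_d(\hbar)_{ij}$ is $\min(p_i,q_j)$, with coefficient the fully contracted expression built from $\lambda(\pr([\,\cdot\,,\,\cdot\,]))$. Conjugating $M_d(\hbar)$ by $\diag(\hbar^{p_i/2})$ and $\diag(\hbar^{q_j/2})$ produces a matrix whose $(i,j)$ entry is $O(\hbar^{|p_i-q_j|/2})$; as $\hbar\to 0$ it converges to a block-diagonal matrix whose blocks (those with $p_i=q_j=p$) are precisely the pairing induced on the degree-$d$ part of $\Sym^{p}(\alekseevN-)\times\Sym^{p}(\alekseevN+)$ by the non-degenerate form $(u,v)\mapsto\lambda(\pr([u,v]))$. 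In characteristic zero a non-degenerate bilinear form induces non-degenerate pairings on all symmetric powers, so the limit matrix is invertible; thus the conjugated matrix is invertible near $0$ with regular inverse, and undoing the non-negative rescaling shows that $M_d(\hbar)^{-1}$, and hence $F_\hbar$, is regular at $\hbar=0$. The same bookkeeping gives that the degree-$d$ part of $F_\hbar$ is $O(\hbar)$ for $d\ge 1$, so $F_\hbar\to 1\otimes 1$.

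For \refitem{theo:starproduct:alekseev:ii} I would expand the now pole-free rational element $F_\hbar$ in a Taylor series $F=\sum_{r\ge 0}\formParam^r F_r$ around $\hbar=0$. Since the degree-$d$ part vanishes to order $\ge\lceil d/N\rceil$ (with $N$ the top grade), each power $\formParam^r$ receives contributions from only finitely many grades, so every $F_r$ lies in the algebraic tensor square; using the PBW isomorphism $\univ(\lie g)/\univ(\lie g)\lie g^\mu\cong\univ(\alekseevN-\oplus\alekseevN+)$ and $\lie g_0=\lie g^\mu$, the factors $y^\hbar_a\in\univ(\alekseevN-)$ and $x^\hbar_a\in\univ(\alekseevN+)$ descend to well-defined classes, so $F\in((\univ(\lie g)/\univ(\lie g)\lie g^\mu)^{\otimes 2})[[\formParam]]$. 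For $K^\mu$-invariance I would observe that the grading, the projections $\pr$ and $(\,\cdot\,)_0$, the antipode $S$, and $\lambda=B(-\I\mu,\cdot)$ are all $\Ad_{K^\mu}$-equivariant, because $\mu$ is fixed by $K^\mu$ and $B$ is invariant; the pairing is therefore $K^\mu$-invariant and its canonical element $F_\hbar$, hence each $F_r$, is $K^\mu$-invariant, which supplies the superscript. Finally, associativity is equivalent to a quadratic twist equation for $F_\hbar$; this is the content of \cite[Theorem~4.9]{alekseev.lachowska:2005a}, which I would verify for $F_\hbar$ directly from the defining duality of the pairing and then transport to $F$ order by order in $\formParam$, using that each graded component stabilises.
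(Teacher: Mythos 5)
The paper does not actually prove this theorem: it is quoted from \cite[Theorem 4.9]{alekseev.lachowska:2005a}, with the non-degeneracy of the pairing taken from \cite[Proposition 3.1]{alekseev.lachowska:2005a}, and the only material the paper adds is the subsequent remark explaining in what sense $F_\hbar$ is rational degree by degree and why the asymptotic expansion is well defined. Your proposal therefore supplies an argument where the text gives only a citation, and the argument you give for \refitem{theo:starproduct:alekseev:i} is the standard leading-term analysis of a Shapovalov-type form and is sound: the grading reduces everything to the finite Gram matrices $M_d(\hbar)$; each entry is a polynomial in $\hbar^{-1}$ of degree at most $\min(p_i,q_j)$, because every $\lie g_0$-factor of $(S(x_j)y_i)_0$ must absorb at least one generator from $\alekseevN+$ and at least one from $\alekseevN-$ (iterated brackets of generators of one sign never reach $\lie g_0$); and the rescaled matrix converges to the graded components of the pairing induced on symmetric powers by $\lambda(\pr([\argument,\argument]))$, which is non-degenerate in characteristic zero and compatible with the grading. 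This simultaneously re-proves the non-degeneracy proposition, gives the vanishing order $\geq \lceil d/N\rceil$ that makes each coefficient $F_r$ live in the algebraic tensor square, and your $K^\mu$-invariance argument (connectedness of $K^\mu$, $\Ad$-equivariance of the grading and of $(\argument)_0$, and automatic invariance of $\lambda$ since it kills $[\lie g_0,\lie g_0]$) is correct. One small imprecision: for $p_i\neq q_j$ the coefficient of $\hbar^{-\min(p_i,q_j)}$ is not a fully contracted matching expression; you only need the upper bound off the diagonal blocks, and the exact identification only when $p_i=q_j$.

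The one place where your proposal has a real gap is associativity. Like the paper, you ultimately defer to \cite[Theorem 4.9]{alekseev.lachowska:2005a}, which is acceptable here, but your description of the step, that associativity is a quadratic twist equation to be verified directly from the defining duality of the pairing, understates it considerably: this is the substantive content of the Alekseev--Lachowska theorem, and their proof passes through the identification of $(\univ(\lie g)/(\univ(\lie g)\cdot\lie g^\mu))^{K^\mu}$ with invariant differential operators and the representation-theoretic interpretation of the pairing via Verma modules. It is not a formal consequence of duality, so it should be flagged explicitly as an imported result rather than as a routine verification.
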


\begin{remark}
Part \refitem{theo:starproduct:alekseev:i} of the previous 
theorem has to be 
understood in the 
	sense that up to a certain degree in $\univ(\alekseevN -) 
	\mathop{\hat\tensor} \univ(\alekseevN +)$ the element $F_\hbar$ is 
	meromorphic. A priori we cannot say that the poles up to arbitrary degree 
	behave nicely, meaning that we cannot exclude the case that the (countable) 
	set of poles has accumulation points different from zero. However, in the 
	example of the 2-sphere the explicit calculations in later sections show 
	that this is not the case, i.e.~for the two sphere 0 is the only 
	accumulation point.
	
Note that the asymptotic expansion is well-defined because 
only finitely many terms of $F_\hbar$ contribute to a certain degree, due to 
the fact that the asymptotic expansion of $y_n^\hbar \tensor x_n^\hbar$ will 
have increasing powers of $\hbar$ as $n \to \infty$. See \cite[Remark 
3.4]{alekseev.lachowska:2005a} for details.
\end{remark}
The next subsection will show that only finitely many terms in
\eqref{eq:alekseev:FLambda} contribute when applying $F_\hbar$ to two
polynomials $p, q \in \Pol(K/K^\mu)$. Thus, the corresponding star
product
\begin{equation}
  p \mathop{*'_\hbar} q 
  :=
  F_\hbar(p,q)
\end{equation}
is well-defined for $\hbar$ different from the poles. If
$\lambda=B(-\I\mu,\cdot)$, the formal star product deforms the the
Kirillov-Kostant-Souriau Poisson structure for adjoint orbits of
connected semisimple compact Lie groups.

%

%
%

\section{The products of Karabegov and Alekseev-Lachowska agree} 
\label{sec:alekseev:comparison}

We here show that the product by Alekseev-Lachowska from
\autoref{theo:starproduct:alekseev} coincides with the star product of
Karabegov defined in \autoref{theo:karabegov:starproduct}.  Note that
$\lambda : \lie g_0 \to \mathbb C$ in the following theorem can be any
Lie algebra homomorphism with $\lambda(\lie g_0 \cap\lie k) \subseteq
\I \mathbb R$, but only for the special choice
$\lambda=B(-\I\mu,\cdot)$ does the star product deform the KKS
symplectic form.

\begin{theorem}
  \label{theo:alekseev:constructionsagree}
  Let $\orb\mu$ be a fixed adjoint orbit of a compact connected
  semisimple Lie group with a fixed complex structure $J$. Then
  Ka\-ra\-be\-gov's star product $*_\hbar$ with respect to the
  $K$-equivariant family defined by $\smash{\withhbar{f_X}(\mu)} =\I 
  \lambda(X)$ agrees
  with the product $*'_\hbar$ defined by Alekseev and Lachowska with
  respect to $\lambda$ whenever $\hbar$ is different from the
  countably many poles.
\end{theorem}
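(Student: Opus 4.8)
The plan is to upgrade the already known agreement of the two \emph{formal} products to agreement of the \emph{non-formal}, $\hbar$-dependent products by exploiting that both sides are rational in $\hbar$. Fix $u,v\in\Pol(\orb\mu)$ and $x\in\orb\mu$. By \autoref{theo:karabegov:starproduct} \refitem{theo:karabegov:starproduct:itemv} the function $\hbar\mapsto(u*_\hbar v)(x)$ is rational with finitely many poles and regular at $\hbar=0$; by \autoref{theo:starproduct:alekseev} \refitem{theo:starproduct:alekseev:i}, together with the fact (established for polynomials) that only finitely many terms of $F_\hbar$ in \eqref{eq:alekseev:FLambda} contribute when applied to two polynomials, the function $\hbar\mapsto(u\mathbin{*'_\hbar}v)(x)$ is rational and regular at $0$ as well. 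Hence the difference $(u*_\hbar v)(x)-(u\mathbin{*'_\hbar}v)(x)$ is rational and regular at $0$, and a rational function regular at $0$ vanishes identically as soon as its Taylor series at $0$ is zero. Therefore it suffices to prove that the two \emph{formal} expansions around $\hbar=0$ coincide; the claimed equality for every $\hbar$ outside the (countable) union of the two pole sets then follows automatically.

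For the formal comparison I would invoke the classification of Wick-type star products by their Karabegov form (\cite{karabegov:1996a}, recalled just before \autoref{theorem:karabegovII:classification}): two $K$-invariant formal star products of (anti-)Wick type with respect to the same complex structure $J$ on the same symplectic manifold coincide if and only if they have the same Karabegov form. On Karabegov's side the family is $\withhbar{f_X}(\mu)=\I\lambda(X)$, which is \emph{independent of $\hbar$}; by $K$-equivariance the whole family $\withhbar{f_\bullet}$ is then $\hbar$-independent, so its formal expansion $\sum_{r\ge0}\formParam^r f_{\bullet,r}$ reduces to $f_{\bullet,0}=\withhbar{f_\bullet}$ with $f_{\bullet,r}=0$ for $r\ge1$. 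By \autoref{theorem:karabegovII:classification} the Karabegov form of the formal expansion of $*_\hbar$ is therefore $\omega_{\gamma_0}$ with no higher-order corrections, i.e.\ the trivial Karabegov class whose leading form is determined by the $K$-equivariant map $\gamma_0$ with $\evaluate{\gamma_0(\mu)}{X}=\I\lambda(X)$. It then remains to show that the formal expansion of $*'_\hbar$ carries the very same Karabegov form; this is precisely the Karabegov class of the Alekseev--Lachowska twist, which is known to be trivial (\cite{calaque}) and whose leading symplectic form is again $\omega_{\gamma_0}$ for the parameter $\lambda$ (for $\lambda=B(-\I\mu,\cdot)$ both reduce to the KKS form). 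With matching Karabegov forms, uniqueness yields equality of the two formal products.

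The step I expect to be the main obstacle is exactly this last point: verifying that the Alekseev--Lachowska product has trivial higher Karabegov class \emph{and} the same leading form $\omega_{\gamma_0}$ as Karabegov's, for a general homomorphism $\lambda$ rather than only $\lambda=B(-\I\mu,\cdot)$. If one prefers not to rely on the class computation of \cite{calaque}, the alternative is a direct identification of the two families of bidifferential operators: writing Karabegov's left multiplication $v\mapsto u*_\hbar v=\oplhs(\tilde u)v$ with $\Oplhs(\tilde u)=u$ as in \eqref{eq:karabegov2:starProductDependingOnHbar}, and the Alekseev--Lachowska operator $u\mathbin{*'_\hbar}v=\sum_i(y^\hbar_i\acts u)(x^\hbar_i\acts v)$, both in terms of the $\univ(\lie g)$-action on $\orb\mu\cong K/K^\mu$, and matching them through the grading $\lie g=\lie g_0\oplus\alekseevN+\oplus\alekseevN-$: the holomorphic factors $x^\hbar_i\in\univ(\alekseevN+)$ should reproduce Karabegov's holomorphic operators $\oplhs$, while the duality of $\{x^\hbar_i\}$ and $\{y^\hbar_i\}$ with respect to the pairing \eqref{eq:alekseev:pairing} encodes the passage $\Oplhs(\tilde u)=u$. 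This route would simultaneously re-prove the rationality and the formal agreement, but it requires a careful dictionary between the pairing $(y,x)_\hbar=\evaluate{\lambda/\hbar}{(S(x)y)_0}$ and the evaluation $\opl(u)\Unit$, which is where the genuine work lies.
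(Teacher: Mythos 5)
Your argument is logically sound and takes a genuinely different route from the paper. You reduce the non-formal statement to the formal one: since both $(u*_\hbar v)(x)$ and $(u\mathbin{*'_\hbar}v)(x)$ are rational in $\hbar$ and regular at $0$ (\autoref{theo:karabegov:starproduct} \refitem{theo:karabegov:starproduct:itemv}, and \autoref{theo:starproduct:alekseev} \refitem{theo:starproduct:alekseev:i} together with the finiteness of contributing terms on polynomials), equality of their Taylor expansions at $0$ forces equality everywhere off the poles; you then settle the formal comparison by the Karabegov-form classification and the class computation of \cite{calaque}. The paper does the opposite: it proves the identity $(u*_\hbar v)(\mu)=F_\hbar(u,v)(\mu)$ directly for each $\hbar$, via \autoref{lemma:comparison:slambda} (the identity $\Oplh(u)\circ\psi_\mu=s_{\lambda/\hbar}u$), \autoref{lemma:alekseev:LieDerivatives}, and the defining duality of the bases $\lbrace y_n^\hbar\rbrace$ and $\lbrace x_n^\hbar\rbrace$ with respect to the Shapovalov pairing, which is what factors $s_{\lambda/\hbar}(u_iv_j)(e)$ into the sum of products appearing in $F_\hbar$. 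What your route buys is brevity; what it costs is self-containedness and generality: for the classification of \cite{karabegov:1996a} to apply you must still verify that the formal expansion of $F_\hbar$ is a differential star product with separation of variables with respect to $J$ deforming $(\orb\mu,\omega_{\gamma_0})$, and that its Karabegov form is the constant form $\omega_{\gamma_0}$ for an \emph{arbitrary} admissible homomorphism $\lambda$ and an arbitrary invariant complex structure $J$, not only for $\lambda=B(-\I\mu,\cdot)$ and the ordering treated in the references. You flag exactly this as the main obstacle, and your proposed fallback --- matching the two families of bidifferential operators through the grading and the pairing --- is in substance the paper's proof; the ``dictionary'' you ask for between the pairing and $\opl(u)\Unit$ is precisely \autoref{lemma:comparison:slambda} together with \autoref{lemma:alekseev:LieDerivatives}. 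The paper's computation has the added benefit of re-deriving the formal agreement (and the equality of the two pole sets) without relying on \cite{calaque}.
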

In order to prove \autoref{theo:alekseev:constructionsagree} we
introduce a function $s_\lambda$, already defined in
\cite{karabegov:1998c} by
\begin{equation}
  s_\lambda \colon \univ(\lie g) \to \Cinfty_{\mathbb C}(K)\glkomma
  \quad 
  s_\lambda u(k)=\evaluate{\lambda}{(\Ad_{k^{-1}} u)_0} \glpunkt
\end{equation}
Furthermore we let 
\begin{equation}
  \psi_\mu : K \to K/K^\mu\cong\orb\mu
\end{equation}
be the projection. It is clear that both $s_\lambda$ and $\psi_\mu$
are $K$-equivariant, where $K$ acts under the adjoint action on
$\univ(\lie g)$ and by left translations on $\Cinfty_{\mathbb C}(K)$.
\begin{lemma}
  \label{lemma:comparison:slambda}
  Taking the $K$-equivariant family $\smash{\withhbar{f_X}} : \orb \mu \to 
  \mathbb R$
  defined by $\smash{\withhbar{f_X}}(\mu) = \I \lambda(X)$, we have
  \begin{equation} \label{eq:lemma:comparison:slambda}
    \Oplh(u) \circ \psi_\mu = s_{\lambda/\hbar} u
  \end{equation}
  for all $u \in \univ(\lie g)$.
\end{lemma}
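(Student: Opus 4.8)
The plan is to reduce the identity \eqref{eq:lemma:comparison:slambda}, which is an equality of two functions on $K$, to a single pointwise statement at the base point $\mu$, and then to prove that statement by induction on the filtration degree of $u$. First I would observe that both sides of \eqref{eq:lemma:comparison:slambda} are $K$-equivariant linear maps $\univ(\lie g)\to\Cinfty_{\mathbb C}(K)$, where $K$ acts by the adjoint action on $\univ(\lie g)$ and by left translations on $\Cinfty_{\mathbb C}(K)$: the maps $s_{\lambda/\hbar}$ and $\psi_\mu$ are equivariant as already noted, and $\Oplh$ is equivariant by \autoref{theo:karabegov:repsfromkequivfamily}. For any $K$-equivariant $F\colon\univ(\lie g)\to\Cinfty_{\mathbb C}(K)$ one has $F(u)(k^{-1})=F(\Ad_k u)(e)$, so $F$ is determined by the values $F(\argument)(e)$. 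Hence it suffices to show
\begin{equation*}
  \Oplh(u)(\mu)=\evaluate{\lambda/\hbar}{u_0}\qquad\text{for all }u\in\univ(\lie g),
\end{equation*}
where $u_0\in\univ(\lie g_0)$ is the projection of $u$ along $\alekseevN - \univ(\lie g)+\univ(\lie g)\alekseevN +$, and $\evaluate{\lambda/\hbar}{\argument}$ is read on $\univ(\lie g_0)$ as the algebra character determined by $\lambda/\hbar$.

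For the induction I would isolate two ingredients. First, a short equivariance computation: differentiating $\Oplh(u)(\Ad_{\exp(-tX)}\mu)=\Oplh(\Ad_{\exp(tX)}u)(\mu)$ at $t=0$ gives, for all $X\in\lie g$ and $u\in\univ(\lie g)$,
\begin{equation*}
  v_X\Oplh(u)\at\mu=\Oplh([X,u])(\mu)\glpunkt
\end{equation*}
Second, a geometric fact: the $(1,0)$-part $\xi_X(\mu)=\pr^{(1,0)}v_X(\mu)$ sees exactly $\alekseevN +$. Indeed $v_X(\mu)=-[X,\mu]$ corresponds under \eqref{eq:tangentspaceidentification} to a tangent vector lying in $T^{(1,0)}_\mu\orb\mu$ for $X\in\alekseevN +$ (the $+\I$-eigenspace of $J$, i.e.\ the root spaces $\lie g_\alpha$ with $\alpha\in\hat\Delta^+$), in $T^{(0,1)}_\mu\orb\mu$ for $X\in\alekseevN -$, and vanishing for $X\in\lie g_0=\lie g^\mu$; hence $\xi_X(\mu)=v_X(\mu)$ for $X\in\alekseevN +$ and $\xi_X(\mu)=0$ for $X\in\lie g_0\oplus\alekseevN -$.

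Now, writing $\oplh(X)=\xi_X-\I g_X$ for the family $g_\bullet=\tfrac1\hbar\withhbar{f_\bullet}$ with $g_X(\mu)=\tfrac{\I}{\hbar}\lambda(X)$, the homomorphism property $\oplh(Xu)=\oplh(X)\oplh(u)$ yields
\begin{equation*}
  \Oplh(Xu)(\mu)=\xi_X\Oplh(u)\at\mu+\tfrac1\hbar\lambda(X)\,\Oplh(u)(\mu)\glpunkt
\end{equation*}
I would then split $X$ along $\lie g=\lie g_0\oplus\alekseevN +\oplus\alekseevN -$ and feed in the induction hypothesis for $u$ of smaller degree. For $X\in\lie g_0\oplus\alekseevN -$ the first term vanishes by the second ingredient; for $X\in\alekseevN +$ it equals $\Oplh([X,u])(\mu)$ by the two ingredients, and $[X,u]$ has strictly smaller degree. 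Using that $\lambda$ vanishes on $\alekseevN\pm$, each case collapses to an elementary identity between projections onto $\univ(\lie g_0)$ — namely $(Xu)_0=X_0u_0$ for $X\in\lie g_0$, $(Xu)_0=0$ for $X\in\alekseevN -$, and $(Xu)_0=([X,u])_0$ (equivalently $(uX)_0=0$) for $X\in\alekseevN +$ — which follow from $\lie g_0$ preserving the grading and from the stability of $\alekseevN -\univ(\lie g)$ and $\univ(\lie g)\alekseevN +$ under the relevant one-sided multiplications.

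The genuine content, and the step I expect to be the main obstacle, is the second ingredient: matching the purely algebraic Alekseev--Lachowska decomposition $\lie g=\lie g_0\oplus\alekseevN +\oplus\alekseevN -$ with the geometric $(1,0)/(0,1)$ splitting of $T^{\mathbb C}_\mu\orb\mu$ induced by $J$. This requires invoking the description of $J$ from \autoref{theo:preliminaries:complexstructure:bijectiontoinvariantorderings} so that $\alekseevN +$ corresponds to $\hat\Delta^+$ and hence to $T^{(1,0)}_\mu\orb\mu$, together with careful use of the identification \eqref{eq:tangentspaceidentification}; everything else is bookkeeping with the PBW filtration and the ideal structure of the decomposition of $\univ(\lie g)$.
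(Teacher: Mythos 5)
Your proof is correct, but the core computation takes a genuinely different route from the paper's. Both arguments make the same initial reduction (by $K$-equivariance it suffices to prove $\Oplh(u)(\mu)=\evaluate{\lambda/\hbar}{u_0}$) and both rest on the same geometric input, namely that under \eqref{eq:tangentspaceidentification} the summands $\alekseevN{\pm}$ correspond to $\hat\Delta^\pm$ and hence to $T^{(1,0)}_\mu\orb\mu$ resp.\ $T^{(0,1)}_\mu\orb\mu$, so that $\oplh(Y)$ kills everything at $\mu$ for $Y\in\alekseevN{-}$ while $\xi_X(\mu)=v_X(\mu)$ for $X\in\alekseevN{+}$. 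The difference lies in how this is propagated to all of $\univ(\lie g)$: the paper puts $u$ into PBW canonical form $Y_1\cdots Y_nH_1\cdots H_mX_1\cdots X_k$, disposes of $n\geq1$ directly, converts $k\geq1$ into a leading application of $\oprh(X_k)$ via the antipode identity $\Opl(u)=\Opr(Su)$ from \autoref{lemma:karabegovII:starProdukt:RisLwithAntipode}, and evaluates the pure $\lie g_0$ case as a product of $\lambda$'s. You instead induct on the filtration degree and treat left multiplication by each summand of $\lie g=\lie g_0\oplus\alekseevN{+}\oplus\alekseevN{-}$; the only nontrivial case, $X\in\alekseevN{+}$, is absorbed by the infinitesimal equivariance identity $v_X\Oplh(u)\at\mu=\Oplh([X,u])(\mu)$, which lowers the degree and is matched on the algebraic side by $(uX)_0=0$. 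Your version avoids both the choice of a PBW ordering and the antipode lemma, at the price of establishing the commutator identity and the three small facts about the projection $(\argument)_0$ (all of which do hold: $\lie g_0$ preserves the grading, and $\alekseevN{-}\univ(\lie g)$ and $\univ(\lie g)\alekseevN{+}$ are stable under the relevant one-sided multiplications); the paper's version is shorter once the antipode identity is in place. I see no gap in your argument.
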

\begin{proof}
  By $K$-equivariance of all involved maps, it suffices to check this
  at the unit $e \in K$.  Hence we need to see that $\smash{\Oplh(u)
    (\mu)} = \evaluate{\lambda/\hbar}{u_0}$ holds.  First, if $X = E -
  \I J E \in \alekseevN +$ with $E \in \lie k \cap \bigcup_{\alpha \in
    \hat \Delta} \lie g_\alpha$ then
  \begin{equation*}
    \oprh(X) 
    = (-X_{\orb\mu})^{(0,1)} + \frac \I \hbar \withhbar{f_X} \glkomma
  \end{equation*}
  and both $-X_{\orb\mu}^{(0,1)}$ and $\smash{\withhbar{f_X}}$ vanish at $\mu$. 
  Similarly, if $Y \in \alekseevN -$ then $\oplh(Y) =
  (-Y_{\orb\mu})^{(1,0)}-\frac \I \hbar \smash{\withhbar{f_Y}}$ vanishes at 
  $\mu$.  It
  suffices to prove the statement for $u$ in canonical form, i.e.~$u =
  Y_1 \cdots Y_n H_1 \cdots H_m X_1 \cdots X_k$ with $Y_i\in\alekseevN
  -$, $X_i\in\alekseevN +$ and $H_i \in \lie g_0$. If $n \geq 1$ then
  \begin{equation*}
    \Oplh (u)(\mu) = \oplh(Y_1) \oplh(Y_2 \cdots Y_n H_1 \cdots H_m 
    X_1 \cdots X_k) \Unit 
    (\mu) = 0 = \evaluatebig{\frac\lambda\hbar}{u_0}
  \end{equation*} 
  and 
  similarly if $k \geq 1$
  \begin{align*}
    \Oplh(u)(\mu )
    &= 
    \Oprh(S(u)) (\mu) 
    \\ 
    &=
    \oprh(-X_k) \oprh(S(Y_1 \cdots Y_n H_1 \cdots H_m X_1 \cdots X_{k-1})) 
    \Unit(\mu)
    \\
    &= 0 \\
    &= 
    \evaluatebig{\frac\lambda\hbar}{u_0}\glpunkt
  \end{align*} 
  Finally, for $n=k=0$ we
  have $v_H (\mu)= 0$ for $H \in \lie g_0$ and therefore $\xi_H (\mu)=
  \eta_H (\mu)= 0$. So
  \begin{align*}
    \Oplh (u)(\mu) &= 
    \frac {-\I} \hbar \withhbar{f_{H_1}} (\mu) \cdots \frac 
    {-\I} \hbar  \withhbar{f_{H_m}} (\mu) 
    \\
    &= 
    \left(\frac {-\I} \hbar \right)^m 
    \I \lambda(H_1) \cdots \I \lambda(H_m)
    \\
    &= 
    \frac 1 {\hbar^m} \lambda(H_1) \cdots 
    \lambda(H_m) 
    \\
    &= \evaluatebig{\frac \lambda \hbar}{u_0} \glpunkt
  \end{align*}
\end{proof}
In the following for $X = E + \I F \in \lie g$ with $E, F \in \lie k$
let 
$X_M \at x:= E_M \at x+ \I F_M \at x\in T_x^{\mathbb C} M$, where $M$ is a
manifold with an action of $K$, e.g.~$K$ or $\orb\mu$.

We enumerate the positive roots by $\alpha_1, \dots, \alpha_N$ and let
$A \in \lbrace 1, \dots, N \rbrace^L$ be a tupel of $L$ elements. Let
$X_{\alpha_i}$, $H_{\alpha_i}$, $Y_{\alpha_i}$ span subalgebras of
$\lie g$ isomorphic to $\lie {sl}_2$ as in
\autoref{subsec:coadjointorbits}.
\begin{lemma}\label{lemma:alekseev:LieDerivatives}
  We have
  \begin{equation} 
    \label{eq:alekseev:LieDerivatives}
    \Lie_{\leftinv{X_A}} s_{\lambda} u(e) = 
    s_\lambda(S(X_A) u)(e) \quad\text{ and }\quad 
    \Lie_{\leftinv{Y_A}} s_{\lambda} u(e) = 
    s_\lambda(u Y_A)(e) \glkomma
  \end{equation}
  where $\Lie_{\leftinv{X_A}}$ is a shorthand for
  $\Lie_{\leftinv{(X_{\alpha_{A(1)}})}} \cdots
  \Lie_{\leftinv{(X_{\alpha_{A(L)}})}}$ and $X_A$ means
  $X_{\alpha_{A(1)}} \cdots X_{\alpha_{A(L)}}$ and similarly for
  $\Lie_{\leftinv{Y_A}}$ and $Y_A$.
\end{lemma}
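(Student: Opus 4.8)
The plan is to push everything into $\univ(\lie g)$ through the functional $\epsilon_\lambda(v) := \evaluate{\lambda}{v_0} = s_\lambda v(e)$, so that $s_\lambda u(k) = \epsilon_\lambda(\Ad_{k^{-1}} u)$ and the two right-hand sides of \eqref{eq:alekseev:LieDerivatives} become $\epsilon_\lambda(S(X_A) u)$ and $\epsilon_\lambda(u Y_A)$. Since $X \mapsto \leftinv X$ is the (order-preserving) algebra homomorphism onto $\Diffop^K(K)$ recalled above, the iterated operator $\Lie_{\leftinv{X_A}}$ is the single left-invariant differential operator $\leftinv{X_A}$, and therefore $(\Lie_{\leftinv{X_A}} s_\lambda u)(e)$ equals the mixed derivative $\partial_{t_1} \cdots \partial_{t_L}\big|_{t=0}$ of $s_\lambda u\big(\exp(t_1 X_{\alpha_{A(1)}}) \cdots \exp(t_L X_{\alpha_{A(L)}})\big)$. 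Inserting $s_\lambda u(k) = \epsilon_\lambda(\Ad_{k^{-1}} u)$, expanding $\Ad_{k^{-1}} = \Ad_{\exp(-t_L X_{\alpha_{A(L)}})} \cdots \Ad_{\exp(-t_1 X_{\alpha_{A(1)}})}$, and differentiating (legitimate because the $\Ad$-action preserves the filtration degree of $u$, keeping everything in a finite-dimensional subspace of $\univdeg d(\lie g)$), each factor produces a $-\ad$, so that
\[
  (\Lie_{\leftinv{X_A}} s_\lambda u)(e) = (-1)^L\, \epsilon_\lambda\big(\ad_{X_{\alpha_{A(L)}}} \cdots \ad_{X_{\alpha_{A(1)}}} u\big),
\]
and the identical formula holds with every $X$ replaced by $Y$.

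Next I would convert these nested commutators into ordinary products using two vanishing identities, valid for all $v \in \univ(\lie g)$: that $\epsilon_\lambda(v X_\alpha) = 0$ for every positive root vector $X_\alpha$, and $\epsilon_\lambda(Y_\alpha v) = 0$ for every negative root vector $Y_\alpha$. Granting these and writing $\ad_{X_\alpha} v = X_\alpha v - v X_\alpha$, applying the first identity to $wv$ gives $\epsilon_\lambda(w\, \ad_{X_\alpha} v) = \epsilon_\lambda(w X_\alpha v)$; peeling the commutators off from the outside one at a time turns $\ad_{X_{\alpha_{A(L)}}} \cdots \ad_{X_{\alpha_{A(1)}}} u$ into $X_{\alpha_{A(L)}} \cdots X_{\alpha_{A(1)}} u$, whence $(\Lie_{\leftinv{X_A}} s_\lambda u)(e) = (-1)^L \epsilon_\lambda(X_{\alpha_{A(L)}} \cdots X_{\alpha_{A(1)}} u) = \epsilon_\lambda(S(X_A) u)$, using $S(X_A) = (-1)^L X_{\alpha_{A(L)}} \cdots X_{\alpha_{A(1)}}$. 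For the second identity the dual step $\epsilon_\lambda(\ad_{Y_\alpha} v \cdot w) = -\epsilon_\lambda(v Y_\alpha w)$ (from the second vanishing identity applied to $vw$) peels the $Y$'s off from the outside and places each of them to the right of $u$ in unchanged order, producing $(-1)^L \epsilon_\lambda(u Y_A)$; combined with the prefactor $(-1)^L$ the two signs cancel and we obtain $\epsilon_\lambda(u Y_A) = s_\lambda(u Y_A)(e)$, with neither antipode nor reversal appearing.

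The only genuine work left — and what I expect to be the main obstacle — is the two vanishing identities, specifically the roots $\alpha \in \Delta'$. For $\alpha \in \hat\Delta^+$ they are immediate: $X_\alpha \in \alekseevN +$ forces $v X_\alpha \in \univ(\lie g) \alekseevN +$, which the projection $(\cdot)_0$ annihilates, and dually $Y_\alpha \in \alekseevN -$ gives $Y_\alpha v \in \alekseevN -\univ(\lie g)$. For $\alpha \in \Delta'$ one has $X_\alpha, Y_\alpha \in \lie g_0$ and $v X_\alpha$ need not lie in the complement, so here I would instead use that the projection $(\cdot)_0$ is a $\univ(\lie g_0)$-bimodule map, $(vZ)_0 = v_0 Z$ and $(Zv)_0 = Z v_0$ for $Z \in \lie g_0$; this holds because $\lie g_0$ preserves the grading, so that both $\alekseevN -\univ(\lie g)$ and $\univ(\lie g) \alekseevN +$ are stable under left and right multiplication by $\lie g_0$. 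Since $\evaluate{\lambda}{\cdot}$ is the algebra character of $\univ(\lie g_0)$ extending $\lambda$, this yields $\epsilon_\lambda(vZ) = \epsilon_\lambda(v) \lambda(Z)$, and it remains only to observe $\lambda(X_\alpha) = \lambda(Y_\alpha) = 0$ for $\alpha \in \Delta'$: indeed $X_\alpha = \tfrac{1}{2}[H_\alpha, X_\alpha] \in [\lie g_0, \lie g_0]$, and a Lie homomorphism into the abelian $\mathbb C$ annihilates all brackets. This settles both identities and completes the proof.
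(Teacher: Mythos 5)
Your proof follows essentially the same route as the paper's: compute the iterated Lie derivative at $e$ as a mixed $t$-derivative of $s_\lambda u$ along products of exponentials, obtain $(-1)^L\,\ad_{X_{\alpha_{A(L)}}}\cdots\ad_{X_{\alpha_{A(1)}}}u$ inside the pairing, and then convert the nested commutators into one-sided products using the vanishing of $\evaluate{\lambda}{(vX_\alpha)_0}$ and $\evaluate{\lambda}{(Y_\alpha v)_0}$. The one place you go beyond the paper is the case of positive roots $\alpha\in\Delta'$, where $X_\alpha\in\lie g_0$ and $(vX_\alpha)_0=v_0X_\alpha$ need not vanish as an element of $\univ(\lie g_0)$; your bimodule-plus-character argument, together with $\lambda(X_\alpha)=0$ because $X_\alpha$ is a bracket of elements of $\lie g_0$, correctly handles this case, which the paper's one-line justification glosses over.
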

\begin{proof}
  Left invariant vector fields are the fundamental vector fields of
  the right action of $K$ on $K$ by right multiplications. Since there
  exists a complex Lie group $G$ corresponding to $\lie g$ and all
  involved maps are complex linear, we can also obtain the fundamental
  vector field of $X \in \lie g$ by taking the fundamental vector
  field of the right action of $G$ on $G$. Hence
  \begin{align*}
    \Lie_{\leftinv{X_A}} s_{\lambda} u(e) &= 
    \Lie_{\leftinv{(X_{\alpha_{A(1)}})}} \cdots 
    \Lie_{\leftinv{(X_{\alpha_{A(L)}})}} 
    s_{\lambda} u (e) \\
    &= \parttj 1 \cdots \parttj L s_\lambda u (
    \exp(t_1 X_{\alpha_{A(1)}}) \cdots \exp(t_L X_{\alpha_{A(L)}}) ) \\
    &=  \parttj 1 \cdots \parttj L \evaluatebig{\lambda}{\left(\Ad_{(\exp( t_1 
	  X_{\alpha_{A(1)}}) 
          \cdots 
          \exp(t_L X_{\alpha_{A(L)}}))^{-1}} u \right)_0} \\
    &=  \parttj 1 \cdots \parttj L 
    \evaluatebig{\lambda}{\left(\Ad_{\exp(-t_L 
	  X_{\alpha_{A(L)}})} 
        \cdots \Ad_{\exp(-t_1 
          X_{\alpha_{A(1)}})} u\right)_0} \\
    &=  \evaluatebig{\lambda}{\left(\parttj L \exp\left(-t_L 
	  \ad_{X_{\alpha_{A(L)}}}\right) \cdots 
        \parttj 1  \exp\left(-t_1 \ad_{X_{\alpha_{A(1)}}}\right) 
        u\right)_0} \\
    &= \evaluatebig{\lambda}{\left(\left(- 
    \ad_{X_{\alpha_{A(L)}}}\right)\cdots\left( 
	  -\ad_{X_{\alpha_{A(1)}}}\right) u \right)_0} \\
    &= \evaluatebig{\lambda}{\left((- {X_{\alpha_{A(L)}}})\cdots ( 
        -{X_{\alpha_{A(1)}}}) u \right)_0} \\
    &= \evaluatebig{\lambda}{\left(
        S(X_{\alpha_{A(1)}} \cdots X_{\alpha_{A(L)}} ) u \right)_0} \\
    &= \evaluate{\lambda}{ (
      S(X_{A} ) u)_0} \\
  &= s_\lambda(S(X_A)u)(e) \glpunkt
  \end{align*}
  Here we used that $(u X_{\alpha_i})_0$ vanishes for all $1 \leq i
  \leq N$ and all $u \in \univ(\lie g)$.
  The calculation for $Y$ is similar except that now $(Y_{\alpha_i}
  u)_0 $ vanishes and therefore
  \begin{equation*} 
  \evaluatebig{\lambda}{\left(
  	  \left(- \ad_{Y_{\alpha_{A(L)}}} \right)
  	  \cdots
  	  \left(- \ad_{Y_{\alpha_{A(1)}}} \right) u
  \right)_0} 
  = \evaluatebig{\lambda}{\left(
  	 u Y_{\alpha_{A(1)}} 
  	   \cdots  
       Y_{\alpha_{A(L)}}
  \right)_0}
  = s_\lambda(u Y_A)(e) \glpunkt
  \end{equation*}
\end{proof}
Finally, we can complete the comparison of the star products defined
by Karabegov and Alekseev-Lachowska.
\begin{proofof}[Proof of \autoref{theo:alekseev:constructionsagree}:]
  Fix an adjoint orbit $\orb \mu$. Since both star products are
  $K$-invariant it suffices to check that they agree at $\mu$. Fix two
  functions $u, v \in \Pol(\orb{\mu})$. By
  \autoref{theo:karabegov:starproduct},
  \refitem{theo:karabegov:starproduct:itemi} we have
  \begin{equation*}
    u =\sum_{i=1}^m \hbar^{d(i)} a_i(\hbar) \Oplh(u_i) 
    \quad\text{and}\quad
    v =\sum_{j=1}^\ell \hbar^{e(j)} b_j(\hbar) \Oplh(v_j) \glkomma
  \end{equation*}
  where $a_i$, $b_j$ are rational functions of $\hbar$, regular at $0$
  and $u_i \in \univdeg{d(i)}(\lie g)$, $v_j \in \univdeg{e(j)}(\lie
  g)$. Consequently
  \begin{equation}
    \label{eq:alekseev:proof:i}
      u \circ \psi_\mu = \sum_{i=1}^{m}  \hbar^{d(i)} a_i(\hbar)
      s_{\lambda/\hbar} u_i \glkomma \quad
      v \circ \psi_\mu = \sum_{j=1}^{\ell}  \hbar^{e(j)} b_j(\hbar)
      s_{\lambda/\hbar} v_j 
    \end{equation}
    and
    \begin{align}
      (u *_\hbar v)\circ \psi_\mu 
      &= 
      \sum_{i=1}^{m}\sum_{j=1}^{\ell}  
      \hbar^{d(i)+e(j)} a_i(\hbar)b_j(\hbar) \Oplh(u_i v_j)\circ\psi_\lambda
      \nonumber
      \\
      &= 
      \sum_{i=1}^{m}\sum_{j=1}^{\ell}  
      \hbar^{d(i)+e(j)} a_i(\hbar)b_j(\hbar) 
      s_{\lambda/\hbar} (u_i v_j)\label{eq:alekseev:proof:ii}
  \end{align}
  by \autoref{lemma:comparison:slambda}.  Let $\lbrace y_0^\hbar := 1,
  y_1^\hbar, y_2^\hbar, \dots \rbrace$ and $\lbrace x_0^\hbar :=1, x_1^\hbar,
  x_2^\hbar, \dots \rbrace$ be the bases of $\univ(\alekseevN -)$ and
  $\univ(\alekseevN +)$ used in \autoref{sec:alekseev} to construct
  $F_\hbar$. Enumerate the positive roots as above and write
  $y^\hbar_n = \sum_{p=1}^{m_n} c_{n,p}^\hbar Y_{N_{n,p}}$ for complex
  coefficients $c^\hbar_{n,p}$ and tuples $N_{n,p} \in \lbrace 1,
  \dots, N \rbrace ^{L_{n,p}}$ of length $\smash{L_{n,p}}$. Similarly,
  write $x^\hbar_n = \smash{\sum_{q=1}^{\ell_n} d^\hbar_{n,q}
    X_{M_{n,q}}}$. We drop all sums over $p$ and $q$ below. Then
  \begin{align*} 
    (u *_\hbar v) (\mu) 
    &\textueber{=}{(1)} 
    \sum_{i,j=1}^{m,\ell} 
    a_i(\hbar) b_j(\hbar) \hbar^{d(i)+e(j)} 
    s_{\lambda/\hbar}(u_i v_j) (e)  \\
    &\textueber{=}{(2)}  \sum_{i,j=1}^{m,\ell} \sum_{n=0}^\infty 
    a_i(\hbar) b_j(\hbar) 
    \hbar^{d(i)+e(j)} 
    s_{\lambda/\hbar}(u_i c^\hbar_{n,p} Y_{N_{n,p}})(e) 
    s_{\lambda/\hbar}(S(d^\hbar_{n,q} 
    X_{M_{n,q}}) v_j) 
    (e) \\
    &\textueber{=}{(3)}  \sum_{i,j=1}^{m,\ell} \sum_{n=0}^\infty 
    a_i(\hbar) b_j(\hbar) 
    \hbar^{d(i)+e(j)} 
    c^\hbar_{n,p} \Lie_{\leftinv{(Y_{N_{n,p}})}} s_{\lambda/\hbar}(u_i)(e) 
    d^\hbar_{n,q} \Lie_{\leftinv{(X_{M_{n,q}})}} s_{\lambda/\hbar}(v_j)(e)
    \\
    &\textueber{=}{(4)}  \sum_{n=0}^\infty 
    c^\hbar_{n,p} \Lie_{\leftinv{(Y_{N_{n,p}})}} (u \circ\psi_\lambda) (e) 
    d^\hbar_{n,q} \Lie_{\leftinv{(X_{M_{n,q}})}} (v \circ\psi_\lambda) (e)
    \\
    &\textueber{=}{(5)}  F_{\hbar}(u,v) (\mu)
    =   (u *'_\hbar v) (\mu) \glpunkt
  \end{align*} 
  Here (1) is \eqref{eq:alekseev:proof:ii}, (2) holds because $\lbrace
  y_0 := 1, y^\hbar_1, y^\hbar_2, \dots \rbrace$ and $\lbrace x_0:=1,
  x^\hbar_1, x^\hbar_2, \dots \rbrace$ form dual bases with respect to
  the pairing \eqref{eq:alekseev:pairing}. Note that only finitely
  many terms contribute to the sum over $n$, so there are no
  convergence issues. The argument in (3) is simply
  \autoref{lemma:alekseev:LieDerivatives} and (4) is
  \eqref{eq:alekseev:proof:i}. Finally (5) comes from the
  identification of elements in $((\univ(\lie g) /(\univ(\lie g) \cdot
  \lie g^\mu)^{\tensor 2})^{K^\mu}$ with bidifferential operators.
\end{proofof}
Note that since both $F_\hbar(u,v)$ and $u*_\hbar v$ depend rationally
on $\hbar$ (for fixed polynomials $u$, $v$) the proof implies in
particular that these rational functions agree almost everywhere,
hence must have the same poles.

%
%

\section{First convergence properties}
\label{sec:convergence}
In the following we use the Karabegov description to see that any
formal $K$-invariant star product $\star$ of Wick type with a nice
Karabegov form is the Taylor expansion of a strict product on $\Pol(\orb\mu)$. 
We use the construction of Karabegov from 
\autoref{sec:karabegov} to obtain this strict product. To do this, we first 
need to prove a 
couple of technical lemmas.
 
For two roots $\alpha, \beta \in \Delta$ with $\alpha+\beta\in \Delta$
we define $N_{\alpha,\beta} \in \mathbb{C}$ by $[X_\alpha, X_\beta] =
N_{\alpha, \beta} X_{\alpha + \beta}$. It is known that
$N_{\alpha,\beta} \neq 0$, \cite[Chapter VI.1, Theorem
6.6]{knapp:lieAlgs}. Recall that $v_X := (-X)_{\orb\mu} := \parttsmall 
\Ad_{\exp(-tX)} x$ for $x \in \orb\mu$ and $X \in \lie k$.
\begin{lemma}
\label{lemma:Kinvariantclosed2}
  A $K$-invariant closed 2-form on $\orb{\mu}$ satisfies
  \begin{equation}
    \omega([X_\alpha, X_\beta], X_\gamma) + \omega([X_\beta, X_\gamma], X_\alpha) + \omega([X_\gamma, 
    X_\alpha], X_\beta) = 0\glpunkt
  \end{equation}
\end{lemma}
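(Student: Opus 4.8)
We want to show that a $K$-invariant closed 2-form $\omega$ on the adjoint orbit $\orb\mu$ satisfies a cocycle-type identity when evaluated on the fundamental vector fields associated to root vectors $X_\alpha, X_\beta, X_\gamma$. Crucially, the fundamental vector fields $v_X = (-X)_{\orb\mu}$ give a Lie algebra anti-homomorphism (or homomorphism up to sign) from $\lie g$ to vector fields on $\orb\mu$, so that $[v_{X_\alpha}, v_{X_\beta}]$ corresponds to $v_{[X_\alpha,X_\beta]}$ up to the expected sign. This is the bridge between the bracket $[X_\alpha, X_\beta]$ appearing inside $\omega$ and the geometric Lie bracket of vector fields.

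**The plan.** I would start from the standard formula for the exterior derivative of a 2-form on three vector fields $U, V, W$:
\begin{equation*}
  \D\omega(U,V,W) = U\,\omega(V,W) - V\,\omega(U,W) + W\,\omega(U,V)
  - \omega([U,V],W) + \omega([U,W],V) - \omega([V,W],U)\glpunkt
\end{equation*}
I would apply this with $U = v_{X_\alpha}$, $V = v_{X_\beta}$, $W = v_{X_\gamma}$, the fundamental vector fields of the $K$-action. Since $\omega$ is closed, the left-hand side vanishes. The first three derivative terms vanish by $K$-invariance: each scalar function $\omega(v_{X_\beta}, v_{X_\gamma})$ is constant along the flow of any fundamental vector field because $\omega$ and the fundamental vector fields are all preserved by the action, so differentiating along $v_{X_\alpha}$ gives zero. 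This leaves precisely the three bracket terms, which by the homomorphism property $[v_{X_\alpha}, v_{X_\beta}] = v_{[X_\alpha, X_\beta]}$ (up to the sign built into the convention $v_X = (-X)_{\orb\mu}$) collapse to the three terms $\omega([X_\alpha,X_\beta], X_\gamma)$ and cyclic permutations, yielding the desired identity.

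**The main obstacle.** The genuinely delicate point is keeping the signs consistent. One must verify carefully that $X \mapsto v_X = (-X)_{\orb\mu}$ is a Lie algebra homomorphism (the minus sign is exactly what fixes this, since $X \mapsto X_{\orb\mu}$ is typically an anti-homomorphism for a left action), so that $[v_{X_\alpha}, v_{X_\beta}] = v_{[X_\alpha,X_\beta]}$ and the bracket of vector fields translates cleanly into the Lie algebra bracket inside $\omega$. I would also need to check that $\omega(X_\alpha, X_\beta)$ in the statement is understood as shorthand for $\omega(v_{X_\alpha}, v_{X_\beta})$ (i.e.\ the form evaluated on the corresponding fundamental vector fields), which is the natural reading given the identification of the complexified tangent space with root vectors in \eqref{eq:tangentspaceidentification}. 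The remaining step, justifying that the derivative terms vanish, is a routine consequence of $K$-invariance: for any $X, Y, Z \in \lie g$ the function $\omega(v_Y, v_Z)$ is a constant (being the value at $\mu$ of a $K$-invariant pairing evaluated on $K$-related data), so its derivative along $v_X$ vanishes. Once the sign bookkeeping and the invariance argument are in place, the identity falls out immediately from the closedness of $\omega$.
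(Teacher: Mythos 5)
Your proposal is correct and follows essentially the same route as the paper's proof: evaluate the standard formula for $\D\omega$ on the three fundamental vector fields, use closedness to set the left-hand side to zero, kill the derivative terms by $K$-invariance, and convert the vector-field brackets into Lie-algebra brackets via the (anti-)homomorphism property of $X \mapsto X_{\orb\mu}$. The sign bookkeeping you flag works out identically with either convention, since the signs from $v_X = (-X)_{\orb\mu}$ cancel pairwise inside the bilinear form $\omega$.
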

Here we use the identifications from \eqref{eq:tangentspaceidentification}.
\begin{proof}
  For three fundamental vector fields $(X_\alpha)_{\orb{\mu}}$,
  $(X_\beta)_{\orb{\mu}}$ and $(X_{\gamma})_{\orb{\mu}}$ we calculate
  \begin{align*}
    0 &= \D 
    \omega ((X_\alpha)_{\orb{\mu}},(X_\beta)_{\orb{\mu}} 
    ,(X_{\gamma})_{\orb{\mu}}) \\
    &= 
    (X_\alpha)_{\orb{\mu}} \omega((X_\beta)_{\orb{\mu}},(X_{\gamma})_{\orb{\mu}})
    + 
    (X_\beta)_{\orb{\mu}} \omega((X_{\gamma})_{\orb{\mu}},(X_\alpha)_{\orb{\mu}}) 
    \\
    &\phantom{SPACE}+ 
    (X_{\gamma})_{\orb{\mu}} \omega((X_\alpha)_{\orb{\mu}},(X_\beta)_{\orb{\mu}}) 
    - 
    \omega([(X_\alpha)_{\orb{\mu}},(X_\beta)_{\orb{\mu}}],(X_{\gamma})_{\orb{\mu}}) 
    \\
    &\phantom{SPACE}- 
    \omega([(X_\beta)_{\orb{\mu}},(X_{\gamma})_{\orb{\mu}}],(X_\alpha)_{\orb{\mu}}) 
    - 
    \omega([(X_{\gamma})_{\orb{\mu}},(X_\alpha)_{\orb{\mu}}],(X_\beta)_{\orb{\mu}}) 
    \\
    &= \omega([X_\alpha, X_\beta]_{\orb{\mu}},(X_{\gamma})_{\orb{\mu}}) 
    + 
    \omega([X_\beta,X_{\gamma}]_{\orb{\mu}},(X_\alpha)_{\orb{\mu}}) 
    +
    \omega([X_{\gamma},X_\alpha]_{\orb{\mu}},(X_\beta)_{\orb{\mu}}) \glpunkt
  \end{align*}
  Here we used that since $\omega$ is $K$-invariant, its Lie
  derivative with respect to fundamental vector fields vanishes.
\end{proof}
\begin{lemma}
  \label{lemma:Kinvariantclosed11}
  Any $K$-invariant closed real $(1,1)$-form $\omega$ on a coadjoint
  orbit $\orb\mu$ can be written as $\omega = \omega_\gamma$ for some
  $K$-equivariant map $\gamma : \orb\mu \to \coorbsign \subseteq \lie
  k^*$.
\end{lemma}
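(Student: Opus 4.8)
The plan is to build $\gamma$ from its value $\xi := \gamma(\mu)$ at the base point. Since $\omega$ is $K$-invariant and $\orb\mu = K/K^\mu$ is homogeneous, $\omega$ is determined by the alternating form it induces on $T_\mu \orb\mu$, which by \eqref{eq:tangentspaceidentification} is $\bigoplus_{\alpha \in \hat\Delta} \lie g_\alpha$; likewise a $K$-equivariant $\gamma \colon \orb\mu \to \lie k^*$ is determined by $\xi = \gamma(\mu)$, which must be $K^\mu$-invariant. For such $\gamma$ the pull-back $\omega_\gamma = \gamma^* \omega_{\coorbsign}$ of the KKS form satisfies $\omega_\gamma(X_{\orb\mu}(\mu), Y_{\orb\mu}(\mu)) = \langle \xi, [X,Y] \rangle$ (with the sign convention of the KKS form). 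Thus the statement reduces to finding a $K^\mu$-invariant $\xi \in \lie k^*$ with $\omega(X_{\orb\mu}(\mu), Y_{\orb\mu}(\mu)) = \langle \xi, [X,Y] \rangle$ for all $X, Y$; the image $\gamma(\orb\mu) = K \cdot \xi$ is then automatically a single coadjoint orbit $\coorbsign$.

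First I would pin down the two sides. Infinitesimal $K^\mu$-invariance forces $\xi$ to annihilate $[\lie g^\mu, \lie g] = \bigoplus_{\alpha \neq 0} \lie g_\alpha \oplus \lie h'$, where $\lie h' := \langle H_\gamma \mid \gamma \in \Delta' \rangle$; hence $\xi$ is the same data as a functional on the Cartan $\lie h$ vanishing on $\lie h'$, and $\langle \xi, [X_\alpha, X_\beta] \rangle$ is nonzero only for $\beta = -\alpha$, where (normalizing $X_{-\alpha} = Y_\alpha$, so that $[X_\alpha, X_{-\alpha}] = H_\alpha$) it equals $\langle \xi, H_\alpha \rangle$. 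On the other side, invariance of $\omega$ under the maximal torus $\lie t \subseteq \lie k^\mu$ forces the weights to cancel, so $\omega(X_\alpha, X_\beta) = 0$ unless $\beta = -\alpha$ (which in passing re-proves that $\omega$ is automatically of type $(1,1)$). Matching the two sides therefore collapses to the single family of scalar equations $\langle \xi, H_\alpha \rangle = \omega(X_\alpha, X_{-\alpha})$ for $\alpha \in \hat\Delta$, together with $\xi|_{\lie h'} = 0$; once such a $\xi$ exists, $\omega = \omega_\xi$ holds at $\mu$, hence everywhere. Note that $\xi \mapsto \omega_\xi$ is injective, since the coroots $\{ H_\delta \mid \delta \in \Delta \}$ span $\lie h$.

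The crux is existence. The prescription above defines a functional on $\lie h$ only if it respects every linear relation among the coroots $H_\delta$, and this is precisely where \autoref{lemma:Kinvariantclosed2} enters: applied to a triple $X_\alpha, X_\beta, X_{-(\alpha+\beta)}$ with $\alpha, \beta, \alpha+\beta \in \hat\Delta$, the closedness identity produces a relation $N_{\alpha,\beta}\, \omega(X_{\alpha+\beta}, X_{-(\alpha+\beta)}) = c_1\, \omega(X_\alpha, X_{-\alpha}) + c_2\, \omega(X_\beta, X_{-\beta})$ among the prescribed scalars, while the relations mixing in $\Delta'$ are supplied by infinitesimal $\lie g^\mu$-invariance of $\omega$ together with $\xi|_{\lie h'} = 0$. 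The main obstacle will be to verify that these closedness and invariance relations among the numbers $\omega(X_\alpha, X_{-\alpha})$ match, coefficient for coefficient, the linear relations among the coroots $H_\delta$ — equivalently, that the structure constants $N_{\alpha,\beta}$ coming from $\omega$ are exactly those expressing $H_{\alpha+\beta}$ through $H_\alpha$ and $H_\beta$ via the Jacobi identity.

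A conceptually cleaner route, which sidesteps this bookkeeping, is to obtain $\gamma$ as an equivariant moment map. Since $\orb\mu$ is simply connected and $\omega$ is closed and $K$-invariant, each one-form $\iota_{X_{\orb\mu}} \omega$ is closed, hence exact; choosing primitives linearly in $X$ and averaging over the compact group $K$ yields a $K$-equivariant map $\gamma \colon \orb\mu \to \lie k^*$ with $\iota_{X_{\orb\mu}} \omega = \D \langle \gamma, X \rangle$. Equivariance makes $\D\gamma$ carry $X_{\orb\mu}$ to the fundamental field of the coadjoint action along $\gamma$, so that $\omega$ and $\gamma^* \omega_{\coorbsign}$ agree whenever one argument is a fundamental vector field, and therefore everywhere. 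This gives $\omega = \omega_\gamma$ directly, and in particular guarantees a posteriori that the relations in the previous paragraph are consistent.
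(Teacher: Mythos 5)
Your proposal is correct, but the proof you actually complete is the moment-map argument, which is genuinely different from the paper's. The paper follows your first route and resolves the consistency problem that you flag as ``the main obstacle'': rather than prescribing $\langle\xi, H_\alpha\rangle = \omega\at\mu(X_\alpha,X_{-\alpha})$ for all $\alpha\in\hat\Delta$ at once and then checking compatibility with every linear relation among the coroots, it defines $\xi=\gamma(\mu)$ only by its values on the $H_\alpha$ for \emph{fundamental} roots $\alpha$ (these form a basis of $\lie h$, so no compatibility check is needed) and then proves the remaining identities $\omega\at\mu(X_\beta,X_{-\beta})=\langle\xi,H_\beta\rangle$ inductively, combining the closedness identity of \autoref{lemma:Kinvariantclosed2} for the triple $(X_\beta,X_\gamma,X_{-\beta-\gamma})$ with the Jacobi-identity relation $N_{\alpha,\beta}H_\gamma+N_{\beta,\gamma}H_\alpha+N_{\gamma,\alpha}H_\beta=0$; the $K^\mu$-invariance of $\xi$ is then verified separately at the end. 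So your reduction to the point $\mu$ and your observation that torus invariance kills $\omega\at\mu(X_\alpha,X_\beta)$ for $\beta\neq-\alpha$ match the paper exactly, but as written that route is unfinished --- you would still have to carry out the inductive verification, or reorganize the definition of $\xi$ as the paper does. Your second route closes the gap cleanly and differently: $\Lie_{X_{\orb\mu}}\omega=0$ and $\D\omega=0$ give $\D\iota_{X_{\orb\mu}}\omega=0$ by Cartan's formula, the orbit $K/K^\mu$ is simply connected (a standard fact about centralizers of tori which you should state or cite, since the paper never records it), so primitives exist, and averaging over the compact group yields an equivariant moment map $\gamma$ with $\omega(X_{\orb\mu},Y_{\orb\mu})=\langle\gamma,[X,Y]\rangle=\gamma^*\omega_{\coorbsign}(X_{\orb\mu},Y_{\orb\mu})$; since fundamental vector fields span every tangent space of a homogeneous space, $\omega=\omega_\gamma$. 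This buys generality (the $(1,1)$ hypothesis is never used, consistent with your remark that it is automatic from invariance) and avoids all root-system bookkeeping, at the cost of importing the topological input $H^1(\orb\mu)=0$ and the averaging construction; the paper's argument is purely Lie-algebraic and stays entirely at the single point $\mu$.
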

\begin{proof}
  By $K$-equivariance it suffices to check the claim at $\mu$, by
  which we mean that there is a $K^\mu$-invariant element $\gamma(\mu)
  \in \lie k^*$ such that $\smash{\omega\at\mu = \gamma^* \omega_\Omega \at
  \mu}$, where $\gamma$ is the unique $K$-equivariant map evaluating to
  $\gamma(\mu)$ at $\mu$. Recall that at $\mu$ we have a basis $X_\alpha$,
  $\smash{\alpha \in \hat\Delta}$ for the tangent space
  $T_\mu^{\mathbb C} \orb{\mu}$. Extend $\omega$ by zero on $\lie h$
  and the root spaces $\lie g_\alpha$ with $\alpha \in \Delta'$.
  Since we have
 	\begin{equation} \label{eq:karabegov:fundamentalvfsonadjandcoadjorbits}
	T_x \gamma v_X \at x = T_x \gamma \partt \Ad_{\exp(-tX)} x = 
	\partt \Ad_{\exp(-tX)}^* \gamma(x) = v_X^{\Omega} \at{\gamma(x)},
 	\end{equation}
  we can immediately note that $\gamma^* \omega_\Omega \at{\mu}(X_\alpha, X_\beta)
  = \omega_\Omega (v_{X_\alpha}^\Omega, v_{X_{\beta}}^\Omega
  )\at{\gamma(\mu)} = \langle \gamma(\mu), [X_\alpha, X_{\beta}]
  \rangle $.
  Therefore there is an obvious candidate for $\gamma(\mu)$: choose
  $\gamma(\mu) \in \lie h^*$ (and extend it as zero on all root
  spaces) such that
  \begin{equation}\label{eq:convergencelemma}
  \omega\at\mu(X_\alpha, X_{-\alpha}) 
  =
  \langle \gamma(\mu), [X_\alpha, X_{-\alpha}] \rangle
  \end{equation}
  holds for all
  fundamental roots $\alpha$. For fundamental roots $[X_\alpha, X_{-\alpha}]$ 
  is a basis of $\lie h$, so this definition makes sense. Note also that 
  $\omega$ is real, so $\gamma(\mu)$
  really lies in $\lie k^*$. We prove below that 
  \eqref{eq:convergencelemma} already implies $\smash{\omega\at\mu(X_\alpha, 
  X_{\beta})} = \langle \gamma(\mu), [X_\alpha, X_{\beta}] \rangle = \gamma^* 
  \omega_\Omega\at\mu (X_\alpha, X_{\beta})$ for all 
  roots $\alpha, \beta$. Thereafter we show 
  that $\gamma(\mu)$ is indeed $K^\mu$-invariant.

  We prove $\omega\at\mu (X_\alpha, X_\beta)= \gamma^* \omega_\Omega
  \at \mu(X_\alpha, X_\beta)$ in increasing generality. First, note
  that by definition \eqref{eq:convergencelemma} it holds for a fundamental 
  root $\alpha =-\beta$.
  A $K$-invariant form is also $\lie k^\mu$-invariant at $\mu$,
  implying $\omega\at\mu(\ad _H X_\alpha, X_\beta) +
  \omega\at\mu(X_\alpha, \ad _H X_\beta) = 0$ for all $H \in \lie h$,
  so $\omega\at\mu(X_\alpha, X_\beta) = 0$ whenever $\alpha \neq -
  \beta$.  Consequently $\omega\at\mu(X_\alpha, X_\beta)$ and
  $\gamma^* \omega_\Omega\at\mu(X_\alpha, X_\beta)$ also coincide
  whenever $\alpha \neq -\beta$.  So the only remaining case is
  $\alpha = -\beta$ for non-fundamental roots. This follows from the
  following statement: if $\omega\at\mu(X_\alpha, X_{-\alpha}) =
  \gamma^* \omega_\Omega\at\mu(X_\alpha, X_{-\alpha})$ holds for some
  roots $\alpha = \beta$ and $\alpha = \gamma$, then it also holds for
  $\beta+\gamma$:
  \begin{align*}
    \omega\at\mu(X_{\beta+\gamma}, X_{-\beta-\gamma}) 
    &= N_{\beta,\gamma}^{-1} \omega\at\mu([X_\beta,X_{\gamma}], X_{-\beta-\gamma}) 
    \\
    &= -N_{\beta,\gamma}^{-1} (\omega\at\mu([X_\gamma,X_{-\beta-\gamma}],X_\beta) + 
    \omega\at\mu([X_{-\beta-\gamma},X_\beta],X_\gamma)) \\
    &= -N_{\beta,\gamma}^{-1} (N_{\gamma,-\beta-\gamma} 
    \omega\at\mu(X_{-\beta},X_\beta) 
    + N_{-\beta-\gamma, \beta} \omega\at\mu(X_{-\gamma}, X_\gamma) \\
    &= N_{\beta,\gamma}^{-1} (N_{\gamma,-\beta-\gamma} \langle \gamma(\mu), H_\beta 
    \rangle + N_{-\beta-\gamma,\beta} \langle \gamma(\mu), H_\gamma \rangle \\
    &= \langle \gamma(\mu), N_{\beta, \gamma}^{-1} (N_{\gamma,-\beta-\gamma} 
    H_\beta +N_{-\beta-\gamma,\beta} H_\gamma) \rangle \\
    &= \langle \gamma(\mu), - H_{-\beta-\gamma} \rangle \\
    &= \gamma^* \omega_\Omega\at\mu(X_{\beta+\gamma}, X_{-\beta-\gamma}) \glpunkt
  \end{align*} 
  We used \autoref{lemma:Kinvariantclosed2} in the second step and
  that the Jacobi identity implies $N_{\alpha,\beta}
  H_\gamma+N_{\beta,\gamma} H_\alpha+ N_{\gamma, \alpha} H_\beta = 0$
  in the second to last step.
  
  Now let us prove $K^\mu$-invariance. 
  Since $K^\mu$ is the centralizer of a torus and thus connected, it
  suffices to check $\lie k^\mu$-invariance of $\gamma(\mu)$. Firstly, $\lie
  h$-invariance is clear since $\gamma(\mu) \in \lie h^*$. Secondly, if $\alpha
  \in \Delta'$ then $\langle \ad^*_{X_\alpha} \gamma(\mu), Z \rangle =
  \langle \gamma(\mu), [-X_\alpha, Z] \rangle$, which vanishes
  whenever $Z \in \lie h$ or any of the root spaces $\lie g_\beta$
  with $\beta \neq -\alpha$. Also $\langle \gamma(\mu), [X_\alpha,
  X_{-\alpha}] \rangle = \gamma^* \omega_\Omega \at \mu (X_\alpha,
  X_{-\alpha}) = \omega\at\mu(X_\alpha, X_{-\alpha}) = 0 $.
\end{proof}
\begin{theorem}
  \label{theo:convergence}
  Let $\orb{\mu}$ be an adjoint orbit with a formal differential
  $K$-invariant star product $\star$ of Wick type. Assume that its
  Karabegov form $\omega = \sum_{r \in \mathbb N_0} z^r \omega_r$ is
  the Taylor series around zero of some rational real form $\omega(\hbar)$,
  regular at zero. Then for any $p, q \in
  \Pol(\orb{\mu})$ and $x \in \orb{\mu}$, 
 $(p \star q)(x)$ is the Taylor series of a unique rational
    function $(p *_\hbar q)(x)$ in $\hbar$ having only finitely many
    poles that might depend on $p$ and $q$ but not on $x$. For all $p,
    q \in \Pol(\orb{\mu})$ and $\hbar$ not lying in this set of poles
    $p *_\hbar q$ is again a polynomial.
\end{theorem}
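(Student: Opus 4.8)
The plan is to run Karabegov's construction backwards: I would manufacture a $K$-equivariant family out of the given rational form $\omega(\hbar)$, feed it into \autoref{theo:karabegov:starproduct} to obtain a genuine product $*_\hbar$ with the desired rationality and polynomial-preservation, and finally identify the asymptotic expansion of $*_\hbar$ with $\star$ using the uniqueness in the classification of Wick-type products by their Karabegov form.

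First I would realize $\omega(\hbar)$ as a rational family of equivariant maps. For each $\hbar$ at which it is defined, $\omega(\hbar)$ is a $K$-invariant closed real $(1,1)$-form: its exterior derivative and its $(2,0)$- and $(0,2)$-components are rational in $\hbar$ and their Taylor series at $0$ vanish (because each $\omega_r$ is a closed $(1,1)$-form), so they vanish identically. By \autoref{lemma:Kinvariantclosed11} we may thus write $\omega(\hbar) = \omega_{\gamma(\hbar)}$. Following that lemma, $\gamma(\hbar)(\mu) \in \lie h^*$ is determined by the linear equations \eqref{eq:convergencelemma} ranging over the fundamental roots, where the brackets $[X_\alpha, X_{-\alpha}]$ form a basis of $\lie h$; since the values $\omega(\hbar)\at\mu(X_\alpha, X_{-\alpha})$ are rational and regular at $0$, so is $\gamma(\hbar)(\mu)$, and hence so is the whole equivariant map. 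Via \autoref{lemma:karabegov:kequivfctsandmomentummaps} this produces a $K$-equivariant family $\withhbar{f_X}(x) = \langle \gamma(\hbar)(x), X \rangle$ depending rationally on $\hbar$ and regular at $0$, whose Taylor coefficients $f_{\bullet,r}$ correspond precisely to the Taylor coefficients $\gamma_r$ of $\gamma(\hbar)$, and these satisfy $\omega_{\gamma_r} = \omega_r$ since $\gamma \mapsto \omega_\gamma$ is linear.

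Next I would apply Karabegov's construction to this family. The zeroth-order map $\gamma_0$ corresponds to $\omega_0$, the symplectic form being deformed, which is non-degenerate; thus the non-degeneracy hypothesis needed to form $(\defspace, *_\hbar)$ holds. \autoref{theo:karabegov:starproduct} then supplies a product $*_\hbar$ for which $p *_\hbar q \in \Pol(\orb\mu)$ for all $\hbar$ outside a finite pole set, and writing $p$ as in \eqref{eq:karabegov:polysinimageofl} gives $(p *_\hbar q)(x) = \sum_j \hbar^{d(j)} a_j(\hbar)\, (\oplh(p_j) q)(x)$ by \refitem{theo:karabegov:starproduct:itemiv}. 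The coefficients $a_j$ depend only on $p$ while the poles of the operators $\oplh$ lie in the fixed finite set $P$ attached to the family, so $\hbar \mapsto (p *_\hbar q)(x)$ is rational with finitely many poles, regular at $0$ by \refitem{theo:karabegov:starproduct:itemv}, and its pole set is manifestly independent of $x$.

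Finally I would identify this product with $\star$. By \autoref{theorem:karabegovII:classification} the asymptotic expansion of $*_\hbar$ defined in \eqref{eq:taylorseries} is a formal Wick-type star product with Karabegov form $\sum_r z^r \omega_{\gamma_r} = \sum_r z^r \omega_r = \omega$, the same as that of $\star$; since a Wick-type star product is uniquely determined by its Karabegov form, this expansion equals $\star$. Hence $(p \star q)(x)$ is exactly the Taylor series of the rational function $(p *_\hbar q)(x)$, and uniqueness of the latter is automatic because a function regular at $0$ is recovered from its Taylor expansion there. I expect the main obstacle to be the first step, namely controlling the rational dependence of $\gamma(\hbar)$ on $\hbar$ and verifying that closedness and the $(1,1)$-type persist in passing from the formal Karabegov form to the rational form $\omega(\hbar)$; once the family is in hand, everything reduces to invoking the Karabegov theorems already recalled.
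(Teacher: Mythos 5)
Your proposal is correct and follows essentially the same route as the paper: realize the rational form $\omega(\hbar)$ as $\omega_{\gamma(\hbar)}$ for a rational $K$-equivariant family via \autoref{lemma:Kinvariantclosed11}, feed that family into \autoref{theo:karabegov:starproduct}, and identify the asymptotic expansion with $\star$ through the classification of Wick-type products by their Karabegov forms. The only point you assert without justification is the $K$-invariance of $\omega(\hbar)$ itself; the paper gets this from the fact (citing M\"uller-Bahns--Neumaier) that each $\omega_r$ is $K$-invariant, after which your own identity-theorem argument for rational functions applies verbatim.
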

\begin{proof}
  It is known (see e.g. \cite{mueller-bahns.neumaier:2004b}) that the
  Karabegov form of a $K$-invariant star product is $K$-invariant,
  i.e.~all $\omega_r$ are $K$-invariant. Thus $\omega(\hbar) = \sum_j 
  b_j(\hbar)\omega^j$ is also $K$-invariant, so we can assume that the 
  $\omega^j$ are $K$-invariant. Here the $b_j$ are rational functions 
  of $\hbar$, regular at $0$. Since the star product is
  also of Wick type, we get from \autoref{lemma:Kinvariantclosed11}
  that all $\omega^j$ are of the form $\omega_{\gamma^j}$ for some
  maps $\gamma^j : \orb\mu \to \lie k^*$. Thus the corresponding 
  $K$-equivariant family $\smash{\withhbar{f_\bullet}} = \sum_j 
  b_j(\hbar)f_\bullet^j$ 
  is rational and regular at $0$, so can be used to construct Karabegov's
  star product $*_\hbar$. Since for polynomials their product $(f
  *_\hbar g)(x)$ is a rational function without pole at $\hbar = 0$ by
  \autoref{theo:karabegov:starproduct},
  \refitem{theo:karabegov:starproduct:itemv}, its Taylor series
  coincides with $(f \star' g)(x)$, where $\star'$ is the formal star
  product associated to $*_\hbar$. By construction its Karabegov form
  is the Taylor series expansion of $\omega(\hbar)$, thus coincides
  with the Karabegov form of $\star$. Consequently $\star =
  \star'$. All claims now follow easily from Karabegov's construction.
\end{proof}
This result establishes the existence of a subalgebra, namely the polynomials, 
on which 
we obtain associative products for all complex numbers $\hbar$ (that are 
different from the poles), thereby passing from the formal to the strict 
setting. In 
the next subsection, we show how this subalgebra can be enlarged in the case of 
the 2-sphere.

%
%

\section{\texorpdfstring{Continuity of the star product on 
\boldmath$\mathbb S^2$}{Continuity of the star product on S2}}
\label{sec:continuity}

In this section we focus on the particular case of the two dimensional
sphere $\mathbb{S}^2$ interpreted as a coadjoint orbit of $\SU$.  Let
$\slc$ be the Lie algebra of trace-free complex $2 \times 2$ matrices
with standard basis denoted by $(H, X, Y)$.
Consider $\lie h = \langle H \rangle$.  The compact real form $\lie k$
of $\slc$ is $\su$
and the Killing form is given by
\begin{equation}
  B(H,H) = 8,
  \quad
  B(X,Y) = B(Y,X) = 4
  \quad \text{and}\quad
  B(H,X) = B(H,Y) = B(X,X) = B(Y,Y) = 0.
\end{equation}
Note that on the dual of $\slc$ we have the dual basis to $(H, X, Y)$
denoted by $(H^*, X^*, Y^*)$ and another basis obtained by using the
isomorphism of $\slc$ and $\slc^*$ induced by the Killing form,
\begin{equation}
  (\toFunction H, \toFunction X, \toFunction Y) 
  = 
  (8 H^*, 4 Y^*, 4 X^*).
\end{equation}
Here and in the following 
$\toFunction Z = B(Z, \cdot) \in \slc^*$ for $Z \in \slc$.

The complex connected simply-connected Lie group corresponding to
$\slc$ is the \emph{special linear group} $\SLC$ and the closed
connected subgroup with Lie algebra $\su$ is the \emph{special unitary
  group} $\SU$. Elements $k \in \SU$ can be explicitly parametrized by
\begin{equation}
  \label{eq:SU2:FormOfElements}
  k = \begin{pmatrix}
    u & -\overline v \\ v & \overline u
  \end{pmatrix}
  \quad\text{with}\quad u,v \in \mathbb C \quad \text{satisfying}\quad\abs 
  u^2+\abs v^2 = 1 \glpunkt
\end{equation}
Thus, it is not hard to show that adjoint orbits of $\SU$ coincide
with spheres (with respect to the inner product given by the Killing form).

We identify linear functionals $\alpha \in \lie h^*$ which are purely
imaginary on $\lie h \cap \su$ with real numbers 
$\lambda_\alpha := \alpha(H) \in \mathbb R$ by evaluating them at $H$.
Weights correspond to
integers under this identification.  Excluding the trivial adjoint
orbit $\lbrace 0 \rbrace$, all other orbits are spheres with non-zero
radius and intersect $\lie h$ in exactly two points $\mu$ and $- \mu$.
We choose $\mu$ such that $\lambda := \lambda_{ B(-\I \mu, 
\cdot) } = B(-\I \mu, H)$ is
positive. If $\mu = \I H$ then $B(-\I \mu, \cdot) = 
\toFunction H
= 8 H^*$ and $\lambda = 8 $. Denote such an orbit by
$\mathbb{S}^2_\lambda$.  Using this convention the map $\psi_\mu$ from
\autoref{sec:alekseev:comparison} is given by
\begin{equation} 
  \label{eq:orbitprojection}
  \psi_\mu(k) = \Ad_k \mu = \frac{\I \lambda}{8}((u \overline u - v \overline 
  v) H + 2 u\overline v X + 2 \overline u v Y) \glpunkt
\end{equation}
The roots are given by $\alpha_+$, $\alpha_-$ with $\lambda_{\alpha_+}
= 2$ and $\lambda_{\alpha_-} = -2$.  There are exactly two
$\SU$-invariant complex structures on the sphere, depending on which
of these roots we define positive. The following lemma is a consequence of 
\autoref{theo:preliminaries:complexstructure:uniquekaehlerstructure}: 
Note that $H_{\alpha_+} = H$ and so $\lambda(H_{\alpha_+}) 
>0$.
\begin{lemma} 
  The Kähler complex structure $J_{\textnormal{Kähler}}$ on the
  sphere corresponds to an ordering for which $\alpha_-$ is positive.
\end{lemma}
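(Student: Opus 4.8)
The plan is to read the statement directly off \autoref{theo:preliminaries:complexstructure:uniquekaehlerstructure}, whose second item characterizes the invariant ordering attached to $J_{\textnormal{Kähler}}$ by the single sign condition $\lambda(H_\alpha) < 0$, where $\lambda = -\I B(\mu, \cdot) \in \lie h^*$. Thus the entire argument reduces to evaluating this functional on the coroots $H_{\alpha_\pm}$ and comparing signs; there is no deformation theory or geometry left to do once the proposition is in hand.

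First I would pin down the combinatorial data. Since $\mathbb S^2_\lambda$ is a nontrivial (hence regular) orbit of the rank-one group $\SU$, no root vanishes on $\mu$, so $\Delta' = \emptyset$ and $\hat\Delta = \Delta = \{\alpha_+, \alpha_-\}$. An invariant ordering on $\hat\Delta$ is therefore nothing but a choice of which of $\alpha_+, \alpha_-$ is declared positive, and it suffices to determine this one sign.

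Next I would evaluate the functional. By the choice of $\mu$ the real number $\lambda = \lambda_{B(-\I\mu,\cdot)} = B(-\I\mu, H)$ is positive, and under the identification of purely imaginary functionals on $\lie h \cap \su$ with real numbers this is exactly the value of $-\I B(\mu, \cdot)$ at $H$. Using $H_{\alpha_+} = H$, as noted just before the lemma, I get $\lambda(H_{\alpha_+}) = \lambda > 0$, and from $H_{\alpha_-} = -H_{\alpha_+}$ I get $\lambda(H_{\alpha_-}) = -\lambda < 0$. Applying the criterion ``$\alpha$ positive $\iff \lambda(H_\alpha) < 0$'' of \autoref{theo:preliminaries:complexstructure:uniquekaehlerstructure}, I conclude that $\alpha_-$ is positive while $\alpha_+$ is not, which is the assertion.

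The only real obstacle is bookkeeping of conventions: one must keep straight the two uses of the symbol $\lambda$ (the functional $-\I B(\mu, \cdot)$ appearing in the proposition versus the real number $\lambda = \lambda_{B(-\I\mu,\cdot)}$ fixed here) and verify their compatibility under the chosen identification, and one must track the sign flip $H_{\alpha_-} = -H_{\alpha_+}$ coming from $\alpha_- = -\alpha_+$. Once $H_{\alpha_+} = H$ is recorded and the sign of $\lambda$ is fixed by the choice of $\mu$, the statement follows with no further computation.
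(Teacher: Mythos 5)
Your proposal is correct and is essentially identical to the paper's own argument: the paper likewise derives the lemma directly from \autoref{theo:preliminaries:complexstructure:uniquekaehlerstructure} by noting $H_{\alpha_+} = H$, hence $\lambda(H_{\alpha_+}) > 0$, so $\alpha_+$ cannot be positive in the Kähler ordering and $\alpha_-$ must be. Your extra care in reconciling the two uses of the symbol $\lambda$ and in noting $\Delta' = \emptyset$ is sound bookkeeping but does not change the route.
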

Remember that to obtain a star product of Wick type, we need to choose
$J$ opposite to $J_{\textnormal{Kähler}}$, so in the corresponding ordering 
$\alpha_+$ is positive.

We observe that any $K$-invariant 2-form on the sphere is a scalar multiple
of the KKS form $\omega_{\mathbb{S}^2}$: by $K$-invariance it is
determined by its value $\alpha \at \mu$ and the space of 2-forms on a
two dimensional vector space is one dimensional. In particular, any
rational $K$-invariant $2$-form which is regular at zero is given by
$\omega(\hbar) = \zeta(\hbar) \omega_{\mathbb{S}^2}$ with a rational
function $\zeta$ having no pole at zero.
Without loss of generality we only consider the formal star product $\star$
with $\omega = \omega_{\mathbb{S}^2}$ and the associated product
$*_\hbar$. One checks easily that the
associated $K$-equivariant family is $f_Z := \toFunction Z : \mathbb
S^2_\lambda \to \mathbb C$.  (For better readability we leave out the
restriction of $\toFunction Z$ to $\mathbb S_\lambda^2$.)

We would like to construct a topology on the polynomials such that the
product $*_\hbar$ becomes continuous. For this, we need some explicit estimates
of the coefficients of the star product. Since we have already seen
that the constructions of Karabegov and Alekseev-Lachowska agree we
can work with either of them. 
We will mostly use the construction
of Alekseev-Lachowska since it produces easier formulas, but in the last part 
of this section we describe how to obtain the same results in the Karabegov 
approach.

\subsection[\texorpdfstring{Formulas for $\SU$ and related properties}{Formulas 
for SU(2) and related properties}]{\texorpdfstring{\boldmath Formulas for $\SU$ 
and related properties}{Formulas for SU(2) and related properties}}
In this section we introduce notation and a locally convex 
topology, called the $T_R$-topology, on the polynomials on the sphere. Both are 
needed in the next section to prove the continuity of $*_\hbar$.

As a first step we recall a formula, obtained by Alekseev and Lachowska in
  \cite{alekseev.lachowska:2005a}, for the star product on the two sphere. With 
  our conventions 
  $\alekseevN + = \langle X
  \rangle$ and $\alekseevN - = \langle Y \rangle$ with $X$ of degree 1
  and $Y$ of degree $-1$.  The pairing is diagonal with respect to the
  bases $\lbrace 1, X, X^2, \dots\rbrace$ of $\univ(\alekseevN +)$ and
  $\lbrace 1, Y, Y^2, \dots\rbrace$ of $\univ(\alekseevN -)$ due to
  degree reasons, so we just have to calculate the normalization.
  \begin{lemma} 
    \label{lemma:ExampleSL2:projectionOfXnYn}
    We have $(X^n Y^n)_0 = n! H (H-1) \dots (H-n+1)$.
  \end{lemma}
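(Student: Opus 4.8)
The claim is that the projection $(X^n Y^n)_0$ onto $\univ(\lie g_0) = \univ(\lie h)$ equals $n! \, H(H-1)\cdots(H-n+1)$. Here $\lie g = \slc$ with standard basis $(H,X,Y)$ and the relevant relations are $[H,X] = 2X$, $[H,Y] = -2Y$ and $[X,Y] = H$. The projection $(\cdot)_0$ sends an element of $\univ(\lie g)$, written in the PBW basis adapted to the decomposition $\univ(\lie g) = (\alekseevN{-}\univ(\lie g) + \univ(\lie g)\alekseevN{+}) \oplus \univ(\lie g_0)$, to its $\univ(\lie h)$-component; equivalently it kills every PBW monomial $Y^a H^b X^c$ with $a \geq 1$ or $c \geq 1$ and fixes the monomials $H^b$. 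So the plan is to move all $Y$'s to the left and all $X$'s to the right, discard every term that still contains a factor $X$ or $Y$, and read off the purely-$H$ remainder.

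\textbf{The inductive strategy.}

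I would prove this by induction on $n$. The cleanest route is to set up a recursion for the whole element $X^n Y^n$ before projecting, using the commutation relation $[X,Y]=H$. Write $X^n Y^n = X^{n-1}(X Y) Y^{n-1} = X^{n-1}(YX + H)Y^{n-1}$. The first piece $X^{n-1} Y X Y^{n-1}$ contributes nothing after projection, since commuting that single leading $X$ leftward past everything will, in the end, always leave a factor of $X$ on the far right or produce lower-degree terms that still fail to be purely-$H$; more precisely $(X^{n-1} Y X Y^{n-1})_0$ can be shown to vanish because the total $X$-degree still exceeds the $Y$-degree after accounting for the single bracket. The useful term is $X^{n-1} H Y^{n-1}$. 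The key auxiliary computation is therefore to commute $H$ to the appropriate side: using $[H, Y^{n-1}] = -2(n-1) Y^{n-1}$ (an immediate induction from $[H,Y]=-2Y$), one gets $X^{n-1} H Y^{n-1} = X^{n-1} Y^{n-1} (H - 2(n-1)) + \text{(terms with surplus $X$)}$, whose projection is $(X^{n-1}Y^{n-1})_0 \,(H - 2(n-1))$.

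\textbf{Closing the induction and the main obstacle.}

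Feeding the inductive hypothesis $(X^{n-1}Y^{n-1})_0 = (n-1)!\,H(H-1)\cdots(H-n+2)$ into the recursion gives $(X^n Y^n)_0 = n\cdot(n-1)!\,H(H-1)\cdots(H-n+2)\cdot(H-n+1)$ after combining the combinatorial factor $n$ coming from the $n$ choices of which $XY$ pair supplies the bracket; this is exactly $n!\,H(H-1)\cdots(H-n+1)$, completing the induction with base case $(X^0Y^0)_0 = (\Unit)_0 = \Unit$. I expect the main obstacle to be the bookkeeping in the vanishing step: one must argue carefully that every monomial produced while reordering $X^n Y^n$ into PBW form, other than the single surviving $H(H-1)\cdots(H-n+1)$ strand, retains at least one uncancelled $X$ or $Y$ and hence dies under $(\cdot)_0$. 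Rather than track these terms explicitly, the efficient formulation is to record only the $\univ(\lie h)$-component throughout, i.e. to define $a_n := (X^n Y^n)_0 \in \univ(\lie h)$ and derive the clean recursion $a_n = a_{n-1}(H - n + 1)\cdot n$ directly, so that all the surplus-$X$, surplus-$Y$ terms are projected away at each stage and never have to be written down.
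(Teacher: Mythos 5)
There is a genuine gap: the vanishing claim at the heart of your recursion is false. You decompose $X^nY^n = X^{n-1}(YX+H)Y^{n-1} = X^{n-1}YXY^{n-1} + X^{n-1}HY^{n-1}$ and assert that $(X^{n-1}YXY^{n-1})_0 = 0$. Already for $n=2$ this fails: $XYXY = (YX+H)(YX+H) = YXYX + YXH + HYX + H^2$, and the first three monomials die under $(\argument)_0$ while the last does not, so $(XYXY)_0 = H^2 \neq 0$. The degree argument you sketch cannot work, because $X^{n-1}YXY^{n-1}$ has equal numbers of $X$'s and $Y$'s, i.e.\ it is of degree $0$ in the grading, which is exactly the degree on which $(\argument)_0$ can be nonzero; only monomials that, after PBW reordering, still carry an explicit $Y$ on the left or $X$ on the right are killed. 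Dropping this term, the computation you actually write down yields the recursion $(X^nY^n)_0 = (X^{n-1}Y^{n-1})_0\,(H-2(n-1))$, which gives $H(H-2)$ for $n=2$ instead of the correct $2H(H-1)$. The factor $n$ you then invoke, coming from \anfa$n$ choices of which $XY$ pair supplies the bracket\anfe, is not present in your derivation: your decomposition is a single identity, not a sum over $n$ choices, so this factor (and the discrepancy between your shift $H-2(n-1)$ and the correct $H-n+1$) is unaccounted for. In fact one can check that $(X^{n-1}YXY^{n-1})_0 = (n-1)(H-n+2)(X^{n-1}Y^{n-1})_0$, which is precisely the contribution your argument discards.

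The repair is the route the paper takes: instead of commuting one $X$ past one $Y$, commute the leftmost $Y$ of $Y^n$ all the way to the left past $X^n$, giving
\begin{equation*}
X^nY^n = YX^nY^{n-1} + \sum_{i=0}^{n-1} X^i H X^{n-1-i} Y^{n-1} \glpunkt
\end{equation*}
Now the discarded term $YX^nY^{n-1}$ genuinely lies in $\alekseevN{-}\,\univ(\lie g)$ and hence has vanishing projection, the sum over $i$ is the honest source of the factor $n$, and commuting $H$ leftward past $X^i$ via $X^iH = HX^i - 2iX^i$ gives $\sum_{i=0}^{n-1}(H-2i)X^{n-1}Y^{n-1} = n(H-n+1)X^{n-1}Y^{n-1}$, whence $(X^nY^n)_0 = n(H-n+1)(X^{n-1}Y^{n-1})_0$ and the induction closes. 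Your base case and your (correct) observation that $(\argument)_0$ intertwines multiplication by elements of $\univ(\lie h)$ carry over unchanged.
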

  \begin{proof} 
    We prove this by induction and a simple calculation:
    \begin{align*}
      (X^n Y^n)_0 
      &=
      \Bigg(\underbrace{Y X^n Y^{n-1}}_{(\dots)_0=0} 
      +
      \sum_{i=0}^{n-1} X^i H X^{n-1-i} Y^{n-1} \Bigg)_0 
      \\
      &=
      \Bigg( \sum_{i=0}^{n-1} \bigg( H X^i  
      + 
      \sum_{j=0}^{i-1} X^j (-2 X) X^{i-j-1} \bigg)X^{n-1-i} Y^{n-1} \Bigg)_0 
      \\
      &=  
      \Bigg( \sum_{i=0}^{n-1} H X^{n-1} Y^{n-1}  
      - 
      2 \sum_{i=0}^{n-1}\sum_{j=0}^{i-1}   X^{n-1} Y^{n-1} \Bigg)_0 
      \\
      &=
      ( n H X^{n-1} Y^{n-1}  - n (n-1)  X^{n-1} Y^{n-1} ) _0 
      \\
      &=  
      n (H-n+1) (X^{n-1} Y^{n-1} ) _0 \glpunkt
    \end{align*}
  \end{proof}
  The previous lemma implies immediately that
  \begin{equation*}
    (Y^n, X^n)_\hbar 
    = 
    \evaluatebig{\frac {B(-\I \mu, \cdot)} \hbar}{(-1)^n (X^n Y^n)_0} 
    =
    (-1)^n n! \frac\lambda\hbar\left(\frac\lambda\hbar-1\right)\cdots\left(\frac\lambda\hbar -(n-1)\right)
  \end{equation*}
  and therefore the pairing $(\cdot,\cdot)_\hbar$ is non-degenerate if $\hbar 
  \notin \poleset := \lbrace 0, \lambda, \frac \lambda 2, \frac \lambda 3, 
  \dots \rbrace$. Consequently, for $\hbar \notin \Omega$ the product $p 
  *_\hbar q = F_\hbar(p,q)$ is well defined for all $p, q \in \Pol(\mathbb 
  S_\lambda^2)$, where
  \begin{align}
    \label{eq:alekseev:BforSU2}
    F_\hbar 
    = 
    \sum_{n=0}^\infty \frac{(-1)^n  
	  \hbar^n }{n!  \lambda(\lambda- \hbar)\cdots 
	  (\lambda - (n-1)\hbar)} Y^n \tensor X^n \glpunkt
  \end{align}
Note that the stabilizer Lie group of $\mu$ is $\SU^\mu = \lbrace \diag(\E^{\I 
t}, 
\E^{-\I t}) \mid t 
\in \mathbb R \rbrace \cong \U$. Therefore $\mathbb S_\lambda^2 \cong \SU / 
\U$ and we can identify smooth 
functions on $\mathbb S_\lambda^2$ with smooth functions on $\SU$, which are 
invariant under the $\U$-action on $\SU$ by right multiplication, i.e.~$f(x) = 
f(xy)$ for $x \in \SU$ and $y \in \U$. Here elements of $\U$ are realized as 
the $2\times 2$-matrices from above. We will refer to the invariance property 
simply by \emph{right-invariance} in the following and write 
$\Cinfty_{\mathbb C}(\SU)^{\U}$ for the space of such functions.

Note that $Y^n \tensor X^n$ 
in \eqref{eq:alekseev:BforSU2} acts on the 
extensions $\hat p$, $\hat q$ of two polynomials $p$, $q$
on the adjoint orbit to right invariant functions on $\SU$ by the left invariant
bidifferential operators corresponding to $Y^n \tensor
X^n$, giving another right invariant function $(\leftinv Y)^n \hat p \cdot 
(\leftinv X)^n \hat q$ and thus also a polynomial on the orbit. However, neither
$(\leftinv Y)^n \hat p$ nor $(\leftinv X)^n \hat q$ is right invariant in 
general, so that these functions do not
necessarily descend to well-defined functions on the adjoint
orbit. It is just their product $(\leftinv Y)^n \hat p \cdot 
(\leftinv X)^n \hat q$ that does.

To obtain continuity estimates it is very convenient to cure this pathology by 
introducing a bigger class of functions than right invariant extensions of 
polynomials. In this class $(\leftinv Y)^n$ should be well-defined and the 
$n$-fold composition of $\leftinv
Y$. Indeed, such a class of functions exists and is given by
$K$-finite functions. Remember that a function $f$ on a manifold $M$ with an 
action of $K$ is $K$-finite, if the span of $k
\acts f$ is finite dimensional. (Here $k \acts f (m) = f(k^{-1} \acts m)$ for 
$k \in K$ and $m \in M$.) We use
the following standard facts on $K$-finite functions.

\begin{proposition}
  \label{theo:kfinite} 
  Let $K$ be a compact connected semisimple Lie group.
  \begin{propositionlist}
  \item The $K$-finite functions on an adjoint orbit $K / K^\mu$
    coincide with polynomials on the adjoint orbit.
    \item If $K$ is realized as a matrix Lie group, then the $K$-finite
    functions on $K$ coincide with polynomials of the matrix entries
    and their complex conjugates.
  \end{propositionlist}
\end{proposition}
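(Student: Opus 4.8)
The plan is to deduce both statements from two standard ingredients applied in tandem: the \emph{Peter--Weyl theorem}, which identifies the $K$-finite functions with the algebraic direct sum of the isotypic components of the (left) regular representation and guarantees that on a compact homogeneous space each such component is \emph{finite-dimensional}, and the \emph{Stone--Weierstrass theorem}, which will show that the candidate algebra of polynomials is uniformly dense. The whole point is that density, combined with finite-dimensionality and mutual $L^2$-orthogonality of the isotypic blocks, upgrades to the pointwise equality we want.

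For the first item I would first check the easy inclusion. The linear coordinate functions on $\lie k$ restrict to functions on $\orb\mu \subseteq \lie k$ that transform under the finite-dimensional adjoint representation, so the polynomials of degree at most $d$ span a finite-dimensional $K$-invariant subspace of $\Cinfty_{\mathbb C}(\orb\mu)$; hence every element of $\Pol(\orb\mu)$ is $K$-finite. For the converse I would note that $\Pol(\orb\mu)$ is a unital subalgebra of $\mathscr{C}_{\mathbb C}(\orb\mu)$ that is closed under complex conjugation (the conjugate of a complex-valued polynomial on the real vector space $\lie k$ is again such a polynomial) and separates points, since the linear coordinates already separate the points of $\lie k$ and $\orb\mu$ is an embedded submanifold. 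By Stone--Weierstrass, $\Pol(\orb\mu)$ is then dense in $\mathscr{C}_{\mathbb C}(\orb\mu)$, hence in $L^2(\orb\mu)$.

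Now I would invoke Peter--Weyl. Identifying $\Cinfty_{\mathbb C}(\orb\mu)$ with the right-$K^\mu$-invariant functions on $K$, its $K$-finite part decomposes as the algebraic direct sum $\bigoplus_\pi E_\pi$ of isotypic components $E_\pi \cong V_\pi \otimes (V_\pi^*)^{K^\mu}$, each finite-dimensional by Frobenius reciprocity, with the $E_\pi$ pairwise orthogonal in $L^2$. Being a $K$-submodule, $\Pol(\orb\mu)$ splits as $\bigoplus_\pi P_\pi$ with $P_\pi \subseteq E_\pi$; if some inclusion were proper, the orthogonal complement of $P_\pi$ inside the finite-dimensional space $E_\pi$ would be a nonzero subspace orthogonal to all polynomials, contradicting the density just proved. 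Hence $P_\pi = E_\pi$ for every $\pi$, and $\Pol(\orb\mu)$ is exactly the space of $K$-finite functions.

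For the second item I would run the identical argument for the algebra $A$ generated by the matrix entries $k \mapsto k_{ij}$ and their complex conjugates. A left translate of $k_{ij}$ is a linear combination of the entries $k_{lj}$, so the generators, and hence all of $A$, are $K$-finite; and $A$ is unital, conjugation-invariant, and separates points precisely because $K$ is realized by matrices, so distinct group elements differ in some entry. Stone--Weierstrass gives density of $A$ in $\mathscr{C}_{\mathbb C}(K)$, and the same orthogonality argument, now with the $\pi$-isotypic component of $\Cinfty_{\mathbb C}(K)$ equal to $V_\pi \otimes V_\pi^*$, forces $A$ to contain every $E_\pi$, i.e.~$A$ is all of the $K$-finite functions. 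The step carrying the real content in both items is this passage from mere density to the exhaustion of each isotypic block; verifying the $K$-finiteness of the generators and the Stone--Weierstrass hypotheses is routine, and the essential input is that $K$ is compact, so that the multiplicities, and thus the spaces $E_\pi$, are finite-dimensional.
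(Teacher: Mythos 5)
Your argument is correct, and it is precisely the standard Peter--Weyl/Stone--Weierstrass argument that the paper does not spell out but delegates to its reference (Karabegov, Lemma~15) with the remark that it is standard. The key step --- upgrading uniform (hence $L^2$) density of the unital, conjugation-closed, point-separating polynomial algebra to exhaustion of each finite-dimensional isotypic block via $L^2$-orthogonality --- is exactly the content of that citation, so nothing is missing.
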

\begin{proof}
  It is a standard argument, see e.g.~\cite[Lemma
  15]{karabegov:1998c}.
\end{proof}
\begin{definition}[\boldmath The algebras $\finitesgroup$ and 
$\finiteinvsgroup$]
  The algebra $\finitesgroup$ is defined to be the algebra generated by the functions
  $U,\overline U, V, \overline V \colon \SU \to \mathbb C$, given for $k
 \in \SU$ by
 \begin{equation}
  U\colon k \mapsto u \glkomma \quad \overline U: k\mapsto \overline u \glkomma \quad 
  V\colon k\mapsto v 
  \glkomma \quad \overline V: k\mapsto \overline v\glpunkt
  \label{eq:SU2:DefUV}
\end{equation}
  The algebra $\finiteinvsgroup$ is defined to be the algebra generated by the functions
  $A, B, C \colon \SU \to \mathbb C$, given for $k
 \in \SU$ by
 \begin{equation}
  \label{eq:SU2:DefABC}
  A = U \overline U - V \overline V \colon k \mapsto \abs u^2-\abs v^2 \glkomma 
  \quad B = \overline U V \colon k \mapsto \overline u 
  v\quad\text{and}\quad C = U \overline V \colon k \mapsto u \overline v  
  \glpunkt
\end{equation}
\end{definition}
From \autoref{theo:kfinite} it is clear that $\finitesgroup$ are just the 
$\SU$-finite functions on $\SU$. We will see below that $\finiteinvsgroup$ are 
the right invariant elements of $\finitesgroup$.

Note that $U \overline U + V \overline V =1$. We would also like to consider a 
\anfa free\anfel commutative algebra without this relation. To this end define
$\hat U, \hat{\overline U}, \hat V, \hat{\overline V}, \hat A, \hat B, \hat C, 
\hat D : \mathbb C^2 \to \mathbb C$
by
\begin{equation}
\hat U : (z_1, z_2) \mapsto z_1 \glkomma \quad
\hat{\overline U} : (z_1, z_2) \mapsto \overline z_1 \glkomma \quad
\hat V : (z_1, z_2) \mapsto z_2 \quad \text{and} \quad
\hat{\overline V}: (z_1, z_2) \mapsto \overline z_2 \glpunkt
\end{equation}
$\hat A$, $\hat B$ and $\hat C$ are the same products as before, just 
decorating \eqref{eq:SU2:DefABC} with hats and $\hat D = \hat U \hat{\overline 
U} + \hat V \hat{\overline V}$. Note that the polynomial algebra 
$\finitesspace$ on $\mathbb C^2$ is generated by $\hat U$, $\hat{\overline U}$, 
$\hat V$ and $\hat{\overline V}$ and denote the algebra generated by $\hat A$, 
$\hat B$, $\hat C$ and $\hat D$ by $\finiteinvsspace$.

Recall that $\SU^\mu = \lbrace \diag(\E^{\I t}, \E^{-\I t}) \mid t 
\in \mathbb R \rbrace$. For a
monomial $U^\alpha \smash{\overline U}^\beta V^\gamma \smash{\overline
V}^\delta$ we introduce the numbers $\numberofnonbars=\alpha+\gamma$ 
and $\numberofbars = \beta+\delta$. Then a monomial in $U$, $\overline U$,
$V$ and $\overline V$ is right invariant if and only if  
$\numberofnonbars = \numberofbars$.
Consider the action of $\U$ on $\mathbb C^{2}$ by componentwise
multiplication and the induced action on $\finitesspace$.
Defining $\numberofnonbarsext$ and $\numberofbarsext$ in the same way as
$\numberofnonbars$ and $\numberofbars$, the right invariant monomials are 
exactly the ones with
$\numberofnonbarsext=\numberofbarsext$. So this says that the right invariant 
elements of $\finitesspace$ are exactly
$\finiteinvsspace$ and the ones of $\finitesgroup$
are $\finiteinvsgroup$.
\begin{proposition}\label{theo:polysareisomtoregulars}
  $\finiteinvsgroup$ is isomorphic to $\Pol(\mathbb S_\lambda^2)$.
\end{proposition}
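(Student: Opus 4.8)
The plan is to exhibit the isomorphism explicitly as the pullback $\psi_\mu^*$ along the orbit projection $\psi_\mu \colon \SU \to \SU/\U \cong \mathbb S_\lambda^2$ recorded in \eqref{eq:orbitprojection}. Since $\psi_\mu$ is surjective, $\psi_\mu^*$ is an injective unital algebra homomorphism from the complex-valued functions on $\mathbb S_\lambda^2$ into those on $\SU$, and I would show that it restricts to an isomorphism of $\Pol(\mathbb S_\lambda^2)$ onto $\finiteinvsgroup$.

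First I would note that $\Pol(\mathbb S_\lambda^2)$ is generated as an algebra by the restrictions to the orbit of the linear coordinate functions on $\slc$, i.e.~the dual basis $H^*, X^*, Y^*$. Reading off \eqref{eq:orbitprojection}, a direct computation gives $\psi_\mu^*(H^*) = \tfrac{\I\lambda}{8} A$, $\psi_\mu^*(X^*) = \tfrac{\I\lambda}{4} C$ and $\psi_\mu^*(Y^*) = \tfrac{\I\lambda}{4} B$, so the three generators are sent to nonzero scalar multiples of $A$, $C$ and $B$. Because the image of $\psi_\mu$ lies in a real $3$-dimensional subspace, the complex conjugates of these coordinate functions restrict, up to sign, to the very same three functions, so that no further generators of $\Pol(\mathbb S_\lambda^2)$ arise. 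Consequently $\psi_\mu^*$ maps $\Pol(\mathbb S_\lambda^2)$ into the subalgebra generated by $A$, $B$, $C$, which is $\finiteinvsgroup$ by definition; and since each of $A$, $B$, $C$ lies in the image, $\psi_\mu^*$ is onto $\finiteinvsgroup$.

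Finally, injectivity is immediate: a function on $\mathbb S_\lambda^2$ whose pullback vanishes must itself vanish, because $\psi_\mu$ is surjective. Thus $\psi_\mu^* \colon \Pol(\mathbb S_\lambda^2) \to \finiteinvsgroup$ is a bijective algebra homomorphism, which proves the claim. I do not expect a serious obstacle here: the only point requiring care is verifying that the image lands exactly in $\finiteinvsgroup$, which is settled by the generator computation together with the fact, already recorded before the statement, that $\finiteinvsgroup$ consists precisely of the right-$\U$-invariant elements of the $\SU$-finite functions $\finitesgroup$. As a consistency check one observes that the relation $\abs u^2 + \abs v^2 = 1$ forces $A^2 + 4 B C = 1$ in $\finiteinvsgroup$, matching the sphere equation satisfied by the generators of $\Pol(\mathbb S_\lambda^2)$; this is not needed for the argument, since bijectivity already follows from surjectivity of $\psi_\mu$ and the generator computation. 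Alternatively, one could avoid the explicit generators and instead invoke \autoref{theo:kfinite} to identify $\Pol(\mathbb S_\lambda^2)$ with the $\SU$-finite functions on the orbit, which correspond under $\psi_\mu^*$ to the right-invariant $\SU$-finite functions on $\SU$, namely $\finiteinvsgroup$.
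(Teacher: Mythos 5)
Your proof is correct, and your primary route is genuinely different from the paper's. The paper argues abstractly: by Proposition~\ref{theo:kfinite} the polynomials on the orbit are exactly the $\SU$-finite functions there and $\finitesgroup$ is exactly the $\SU$-finite functions on $\SU$, so since the mutually inverse restriction/extension maps between $\Cinfty_{\mathbb C}(\SU)^{\U}$ and $\Cinfty_{\mathbb C}(\SU/\U)$ preserve $\SU$-finiteness by equivariance, they restrict to a bijection between $\finiteinvsgroup$ and $\Pol(\mathbb S_\lambda^2)$ --- no computation needed, and the argument works verbatim for any adjoint orbit of a compact group. You instead compute $\psi_\mu^*$ on the linear generators $H^*, X^*, Y^*$ and check directly that the image is the algebra generated by $A$, $B$, $C$; your computation is consistent with \eqref{eq:orbitprojection} and with the identities \eqref{eq:HXYasABC} the paper records immediately after the proposition, and your observation that the conjugates $\overline{H^*}, \overline{X^*}, \overline{Y^*}$ restrict to $\su$ as $-H^*, -Y^*, -X^*$ correctly disposes of the only delicate point (that no further generators arise from complex conjugation on the real form). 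What each approach buys: yours produces the explicit isomorphism as a by-product (which the paper has to state separately), while the paper's is computation-free and generalizes immediately. The ``alternative'' you sketch in your last sentence is in fact precisely the paper's proof.
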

\begin{proof}
  We only need to see that the restriction and extension maps between
  right invariant functions on $\SU$, i.e.~$\Cinfty_{\mathbb C}(\SU)^{\U}$, and
  functions on the orbit, i.e.~$\Cinfty_{\mathbb C}(\SU/\U)$, map 
  $\finiteinvsgroup$ to
  $\Pol(\mathbb S_\lambda^2)$ and vice versa. However, this is immediately
  clear from \autoref{theo:kfinite} and since by $\SU$-equi\-va\-riance of the 
  map 
  $\SU \to \SU /\U$ they map 
  $\SU$-finite functions to $\SU$-finite functions.
\end{proof}
We can determine the isomorphism explicitly:
from \eqref{eq:orbitprojection} it follows that
\begin{equation}
  \label{eq:HXYasABC}
  \toFunction H \circ \psi_\lambda = \I \lambda A \glkomma \quad
  \toFunction X \circ \psi_\lambda = \I \lambda B \quad \text{and}\quad
  \toFunction Y \circ \psi_\lambda = \I \lambda C \glpunkt
\end{equation}
It is well-known that any polynomial vanishing on the 3-sphere viewed as a 
submanifold of $\mathbb R^4$ is a multiple of
$x_1^2+x_2^2+x_3^2+x_4^2-1$, where $(x_1, x_2, x_3, x_4)$ are standard 
coordinates on $\mathbb R^4$. This implies that any element
of $\finitesspace$ vanishing on $\SU \subseteq
\mathbb C^{2\times 2}$ (from $p \in \finitesspace$ we obtain a function on 
$\SU$ by composing with the map projecting a $2 \times 2$ matrix to its first 
column) 
if and only if it is a multiple of $\hat U \hat{\overline U} + \hat V
\hat{\overline V}-1$.  Similarly an element of $\mathbb C[\hat A, \hat B, \hat 
C]$, the algebra generated by $\hat A$, $\hat B$ and $\hat C$, vanishes on 
$\SU$ if and only if it is a multiple of $\hat A^2+4 \hat B\hat C-1$. 
Consequently, we can prove
the following claim.
\begin{proposition} \label{theo:discriptionPR}
   We have 
   \begin{equation}
      \finitesgroup \cong \finitesspace / \langle \hat U \hat{\overline U} + \hat V 
\hat{\overline V} -1 \rangle
\quad\text{and}\quad
\finiteinvsgroup \cong 
\finiteinvsspace / \langle \hat U \hat{\overline U} + \hat V \hat{\overline V} 
-1 \rangle \cong \mathbb C[\hat A, \hat B, \hat C] / \langle \hat A^2 + 4 
\hat B \hat C 
-1 \rangle \glpunkt
   \end{equation}
\end{proposition}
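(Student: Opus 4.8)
The plan is to obtain each isomorphism by exhibiting the algebra on the left as the image of a restriction homomorphism along the identification of $\SU$ with the unit sphere $S^3 \subseteq \mathbb C^2$, and then to compute the relevant kernel. Writing $k \in \SU$ as in \eqref{eq:SU2:FormOfElements} and sending it to its first column $(u,v)$ gives a diffeomorphism $c \colon \SU \to S^3 = \{(z_1,z_2) \in \mathbb C^2 : \abs{z_1}^2 + \abs{z_2}^2 = 1\}$, and the pullback $\rho := c^*$ is a surjective algebra homomorphism $\finitesspace \to \finitesgroup$ with $\rho(\hat U) = U$, $\rho(\hat{\overline U}) = \overline U$, $\rho(\hat V) = V$, $\rho(\hat{\overline V}) = \overline V$. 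Its kernel is exactly the set of polynomials in $\finitesspace$ vanishing on $S^3$, so the vanishing statement already recorded above gives $\ker \rho = \langle \hat U \hat{\overline U} + \hat V \hat{\overline V} - 1\rangle$ and hence the first isomorphism $\finitesgroup \cong \finitesspace / \langle \hat U \hat{\overline U} + \hat V \hat{\overline V} - 1 \rangle$.

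Since $c$ intertwines the right $\U$-action on $\SU$ with the diagonal action $t \cdot (z_1,z_2) = (t z_1, t z_2)$ on $\mathbb C^2$, the map $\rho$ is $\U$-equivariant and restricts to surjections $\finiteinvsspace \to \finiteinvsgroup$ and $\mathbb C[\hat A, \hat B, \hat C] \to \finiteinvsgroup$, where I use $\rho(\hat A) = A$, $\rho(\hat B) = B$, $\rho(\hat C) = C$, $\rho(\hat D) = 1$ and that $A, B, C$ generate $\finiteinvsgroup$. The kernel of the second restriction is the set of elements of $\mathbb C[\hat A, \hat B, \hat C]$ vanishing on $S^3$, which equals $\langle \hat A^2 + 4 \hat B \hat C - 1\rangle$ by the second vanishing statement above; this yields $\finiteinvsgroup \cong \mathbb C[\hat A, \hat B, \hat C]/\langle \hat A^2 + 4 \hat B \hat C - 1\rangle$.

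What remains, and where the actual work lies, is the kernel of $\rho|_{\finiteinvsspace}$, namely $\finiteinvsspace \cap \ker\rho = \finiteinvsspace \cap \langle \hat D - 1\rangle$: one must upgrade membership in the ideal generated by $\hat D - 1$ in the large ring $\finitesspace$ to membership in the ideal it generates inside $\finiteinvsspace$. I would do this by grading $\finitesspace$ by the integer $\numberofnonbarsext - \numberofbarsext$ (holomorphic minus antiholomorphic degree) attached to a monomial $\hat U^a \hat{\overline U}^b \hat V^c \hat{\overline V}^d$, i.e.\ $(a+c)-(b+d)$; this is a $\mathbb Z$-algebra grading whose degree-zero part is precisely the $\U$-invariants $\finiteinvsspace$, as observed above. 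As $\hat D - 1$ is homogeneous of degree zero, the ideal $\langle \hat D - 1\rangle \subseteq \finitesspace$ is graded, so its degree-zero component is $(\hat D - 1)\finiteinvsspace$; intersecting with the invariants therefore gives $\finiteinvsspace \cap \langle \hat D - 1\rangle = (\hat D - 1)\finiteinvsspace$. This produces the middle isomorphism $\finiteinvsgroup \cong \finiteinvsspace / \langle \hat U \hat{\overline U} + \hat V \hat{\overline V} - 1\rangle$, and composing it with the isomorphism of the previous paragraph gives the last link in the chain; alternatively one checks directly from the definitions that $\hat A^2 + 4\hat B \hat C = \hat D^2$ in $\finiteinvsspace$, which reduces to $\hat A^2 + 4 \hat B \hat C = 1$ upon setting $\hat D = 1$. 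The main obstacle is exactly this grading step, which guarantees that the kernel is no larger than the ideal generated by $\hat D - 1$ within $\finiteinvsspace$.
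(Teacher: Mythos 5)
Your proposal is correct and follows essentially the same route as the paper, which deduces the proposition from the two preceding vanishing-ideal statements via the restriction homomorphisms. The only addition is your grading argument showing that $\finiteinvsspace \cap \langle \hat D - 1\rangle = (\hat D - 1)\finiteinvsspace$, a detail the paper leaves implicit but which you handle correctly (and the identity $\hat A^2 + 4\hat B\hat C = \hat D^2$ checks out).
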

Now since $\leftinv{X}$ and $\leftinv{Y}$ are left invariant vector
fields, it is clear that by applying them to a $\SU$-finite function, we
get a $\SU$-finite function again. 
\begin{lemma}
  We have:
\begin{subequations}
  \begin{align}
    \label{eq:leftinvX}
    \leftinv X U &= 0\glkomma &
    \leftinv X V &= 0\glkomma &
    \leftinv X \overline U &= V\glkomma &
    \leftinv X \overline V &= -U\glkomma \\
    \leftinv Y U &= -\overline V\glkomma &
    \leftinv Y V &= \overline U\glkomma &
    \leftinv Y \overline U &= 0 \glkomma &
    \leftinv Y \overline V &= 0 \glpunkt
    \label{eq:leftinvY}
  \end{align}
\end{subequations}
\end{lemma}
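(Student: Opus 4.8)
The plan is to reduce everything to a single matrix multiplication by exploiting, exactly as in the proof of \autoref{lemma:alekseev:LieDerivatives}, that the left invariant vector fields are the fundamental vector fields of the right action of the group on itself, and that for $Z \in \slc$ the corresponding field is obtained from the right action of the complex group $\SLC$. Concretely, writing $E_{ij} \colon k \mapsto k_{ij}$ for the $(i,j)$ matrix-entry function and using $\exp(tZ) = \Unit + tZ + \dots$, one has
\[
  \leftinv Z E_{ij}(k) = \parttsmall E_{ij}(k \exp(tZ)) = (kZ)_{ij} \glpunkt
\]
Thus, once the coordinate functions are expressed through the $E_{ij}$, each of the eight derivatives is simply an entry of a product of two fixed $2 \times 2$ matrices.

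The key preliminary step is the identification of $U, \overline U, V, \overline V$ with matrix entries. Writing $k \in \SU$ as in \eqref{eq:SU2:FormOfElements}, its entries are $k_{11} = u$, $k_{21} = v$, $k_{12} = -\overline v$ and $k_{22} = \overline u$, so on $\SU$ the functions of \eqref{eq:SU2:DefUV} become $U = E_{11}$, $V = E_{21}$, $\overline U = E_{22}$ and $\overline V = -E_{12}$. The point requiring care is that, although $\overline U$ and $\overline V$ are defined as complex conjugates and are therefore a priori antiholomorphic, on the submanifold $\SU$ they literally coincide with the holomorphic matrix-entry functions $E_{22}$ and $-E_{12}$. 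This is precisely what allows the complex fields $\leftinv X, \leftinv Y$ (coming from $X, Y \in \slc$) to be applied to them through the holomorphic formula above, and it is the only place where the real-versus-complex subtlety genuinely enters; I expect this identification, rather than any computation, to be the main thing to justify.

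It then remains to multiply. With the standard basis $X = \bigl(\begin{smallmatrix} 0 & 1 \\ 0 & 0 \end{smallmatrix}\bigr)$ and $Y = \bigl(\begin{smallmatrix} 0 & 0 \\ 1 & 0 \end{smallmatrix}\bigr)$ one computes $kX = \bigl(\begin{smallmatrix} 0 & u \\ 0 & v \end{smallmatrix}\bigr)$ and $kY = \bigl(\begin{smallmatrix} -\overline v & 0 \\ \overline u & 0 \end{smallmatrix}\bigr)$. Reading off the relevant entries and inserting the identifications above yields all of \eqref{eq:leftinvX}--\eqref{eq:leftinvY} at once: for instance $\leftinv X \overline U = (kX)_{22} = v = V$ and $\leftinv X \overline V = -(kX)_{12} = -u = -U$ from the first product, while $\leftinv Y U = (kY)_{11} = -\overline v = -\overline V$ and $\leftinv Y V = (kY)_{21} = \overline u = \overline U$ from the second, the remaining four entries being zero. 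Beyond this bookkeeping there is no real obstacle.
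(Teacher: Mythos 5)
Your proposal is correct and follows essentially the same route as the paper: both compute $\leftinv Z f(k) = \parttsmall f(k\exp(tZ))$ using the right action of the complexified group, after identifying $U, \overline U, V, \overline V$ with (sign-adjusted) matrix-entry functions via \eqref{eq:SU2:FormOfElements}. The paper works out only the single case $\leftinv X \overline V = -U$ and declares the rest similar, whereas you package all eight derivatives into the two products $kX$ and $kY$ and explicitly flag the holomorphic-extension point that the paper leaves implicit; this is a presentational improvement, not a different argument.
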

\begin{proof}
  Let us compute:
\begin{align*}
  \leftinv{X} \overline V(k) 
  &= 
  \partt \exp\left(t \begin{pmatrix}
				 0 & 1 \\ 0 & 0 
  \end{pmatrix}\right)\acts \overline V(k) 
  \\
  &= 
  \partt \overline V\left( \begin{pmatrix}
				 u & v \\ -\overline v & \overline u 
  \end{pmatrix} \begin{pmatrix}
				 1 & t \\ 0 & 1 
  \end{pmatrix} \right) 
  \\ 
  &= 
  \partt \overline V \begin{pmatrix}
				 u & u t + v \\ - \overline v & -t \overline v + \overline u
  \end{pmatrix} 
  \\
  &= 
  - \partt (u t+v) 
  \\
  &= 
  - u
  \\
  &= 
  - U(k) \glpunkt 
\end{align*}
 The other equalities are obtained similarly.
\end{proof}
It will be convenient in the following to not just have a product on
$\finiteinvsgroup$, but also on $\finitesspace$ and $\finiteinvsspace$. Thus
define $\prodP: \finitesspace \times
\finitesspace \to \finitesspace$ by applying the element $F_\hbar$ from
\eqref{eq:alekseev:BforSU2}, with the derivatives defined by
decorating \eqref{eq:leftinvX} and \eqref{eq:leftinvY} with hats. It
is clear from the construction that $\prodP$ restricts to a
product $\prodR$ on $\finiteinvsspace$ and descends to
$\finiteinvsgroup$, where it coincides with $*_\hbar$. Keep in mind that
$\prodP$ is not associative any more, but that the associativity
only holds on the quotient.
\begin{lemma}
  \label{lemma:alekseevderivatives}
  Let $p = p_1 \cdots p_d$ be a monomial with
  $p_i \in \lbrace A, B, C \rbrace$. Let $Z$ be any of $U$,
  $\overline U$, $V$ or $\overline V$.
Then
  \begin{lemmalist}
  \item\label{lemma:alekseevderivatives:i} $(\leftinv X)^2 Z =
    (\leftinv Y)^2 Z = 0$.
  \item\label{lemma:alekseevderivatives:ii} $(\leftinv X)^n p = (\leftinv 
    Y)^n p = 0$ if $n > d$.
  \end{lemmalist}
\end{lemma}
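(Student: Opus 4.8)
The plan is to reduce everything to the action of $\leftinv X$ and $\leftinv Y$ on the four generators recorded in \eqref{eq:leftinvX} and \eqref{eq:leftinvY}, treating the two items in turn. For \refitem{lemma:alekseevderivatives:i} I would simply compose those formulas. Since $\leftinv X U = \leftinv X V = 0$, one gets $(\leftinv X)^2 U = (\leftinv X)^2 V = 0$ at once, while $(\leftinv X)^2 \overline U = \leftinv X V = 0$ and $(\leftinv X)^2 \overline V = -\leftinv X U = 0$. The four cases for $\leftinv Y$ are symmetric, now using $\leftinv Y \overline U = \leftinv Y \overline V = 0$. Thus each generator is killed by the square of either operator, and \refitem{lemma:alekseevderivatives:i} is a one-line check.

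For \refitem{lemma:alekseevderivatives:ii} the real content is choosing the right bookkeeping. I would attach to each monomial in $U, \overline U, V, \overline V$ its \emph{barred degree}, the number of barred factors counted with multiplicity, and dually its \emph{unbarred degree}. The formulas \eqref{eq:leftinvX} show that $\leftinv X$ annihilates the unbarred generators $U, V$ and sends each barred generator to an unbarred one; because $\leftinv X$ is a derivation, Leibniz then shows that it maps any monomial of barred degree $m$ to a combination of monomials of barred degree $m-1$, each summand either vanishing (when an unbarred factor is differentiated) or losing exactly one barred factor. Hence $(\leftinv X)^{m+1}$ annihilates everything of barred degree at most $m$, and symmetrically \eqref{eq:leftinvY} shows $\leftinv Y$ lowers the unbarred degree by one.

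To conclude I would observe that each of $A = U\overline U - V \overline V$, $B = \overline U V$ and $C = U \overline V$ is built from monomials containing exactly one barred and one unbarred factor, so each has barred degree $1$ and unbarred degree $1$. Therefore $p = p_1 \cdots p_d$ has barred degree at most $d$ and unbarred degree at most $d$, and the previous step yields $(\leftinv X)^n p = 0$ and $(\leftinv Y)^n p = 0$ as soon as $n > d$. The one point I would flag as requiring care is precisely this counting: although $p$ has total degree $2d$ in the generators, the correct threshold is $d$ and not $2d$, because $\leftinv X$ only ever consumes barred factors and $p$ contains exactly $d$ of them. Equivalently, one may expand $(\leftinv X)^n p$ by the multinomial Leibniz rule and note that for $n > d$ every term must hit some factor $p_i$ at least twice, which vanishes by the barred-degree-$1$ observation; I would present the grading argument as the cleaner of the two.
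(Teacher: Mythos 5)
Your proposal is correct. Part \refitem{lemma:alekseevderivatives:i} is the same immediate check from \eqref{eq:leftinvX} and \eqref{eq:leftinvY} that the paper performs. For part \refitem{lemma:alekseevderivatives:ii} the paper's route is to first establish the intermediate fact $(\leftinv X)^2 p_i = 0$ for a \emph{single} factor $p_i \in \lbrace A, B, C\rbrace$ (by the product rule, since in each of the two-letter monomials making up $A$, $B$, $C$ at least one letter is already annihilated by $\leftinv X$), and then to distribute the $n > d$ derivatives over the $d$ factors by the Leibniz rule, so that some factor is hit at least twice --- which is precisely the \anfa multinomial Leibniz\anfe\ alternative you sketch at the end. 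Your primary presentation instead filters $\finitesspace$ by the barred degree, notes that $\leftinv X$ is a derivation killing unbarred generators and sending barred ones to unbarred ones (hence strictly lowering barred degree on every surviving term), and uses that $A$, $B$, $C$ are homogeneous of barred degree $1$, so $p$ has barred degree $d$; the symmetric statement for $\leftinv Y$ uses the unbarred degree. The two arguments carry identical content, but the grading version makes it slightly more transparent why the correct threshold is $d$ rather than the total degree $2d$ --- the point you rightly single out as the one place where care is needed. Either write-up would be acceptable in place of the paper's proof.
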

\begin{proof}
  Part \refitem{lemma:alekseevderivatives:i} follows immediately from
  \eqref{eq:leftinvX} and \eqref{eq:leftinvY}. Then using the product rule it 
  follows that $(\leftinv X)^2 p_i = 0$ since at least one of $\leftinv X Z$ or 
  $\leftinv X \overline Z$ vanishes and similarly for $Y$. This implies part
  \refitem{lemma:alekseevderivatives:ii} by using the product rule again.
\end{proof}
Let $V$ be a finite dimensional locally convex vector space and $R \in \mathbb 
R$. For
any continuous seminorm $p$ define a seminorm on the tensor algebra
$T^\bullet(V)$ by
\begin{equation}
  p_R = \sum_{n=0}^\infty (n!)^R p^n \glkomma
\end{equation}
where $p^n = p \tensor \ldots \tensor p$. We write $T^\bullet_R(V)$ for the
locally convex space that arises if $T^ \bullet(V)$ is endowed with
the seminorms $p_R$ for all continuous seminorms $p$.
We get an induced topology on the subspace
$\Sym^\bullet(V)$ that is closed if $R \geq 0$. In this case the
evaluation functional $\delta_\alpha : \Sym^\bullet_R(V) \to \mathbb
C$ at $\alpha \in V^*$ is continuous.
\begin{remark}
		The $T_R$-topology is coarser than the $T_{R'}$-topology for $R < R'$ 
		and has a larger completion, see e.g.~\cite[Lemma 3.6]{waldmann:2014a}. 
		Thus we are mainly interested in the case $R=0$ below, even though we 
		formulate our results for all $R \geq 0$.  The statements above remain 
		true in 
		infinite dimensions if one uses the projective tensor product.
\end{remark}
For a finite dimensional vector space all norms are equivalent and
thus we can fix any norm $\norm\cdot$ and it suffices to 
consider the seminorms $(C \norm\cdot)_R$ for all $C \geq 1$.
In the following we will have to estimate these seminorms. The
tensor product of $1$-norms has a particularly easy form.
\begin{lemma}
  \label{theo:continuity:onenormprojectivetensorproduct}
  Let $V$ be a finite dimensional vector space with basis $e_1, \ldots,
  e_n$. Let $\norm\cdot_1$ denote the usual $1$-norm with respect to
  this basis, i.e.~if $v \in V$, $v = \sum_i v_i e_i$ then $\norm v_1
  = \sum_i \abs{v_i}$. Then on the tensor product
  $V^{\tensor k}$ we have
  \begin{equation}
    \label{eq:TensorProduct:projective:forOneNorm}
    \norm\cdot_1 \tensor \dots \tensor \norm\cdot_1(t)
    = 
    \sum_{i_1, \ldots, i_k=1}^n \abs{t_{i_1 \cdots i_k}} 
    \quad\text{if}\quad 
    t 
    =  
    \sum_{i_1, \ldots, i_k=1}^n t_{i_1 \dots i_k} e_{i_1} \tensor \cdots \tensor 
    e_{i_k}\glpunkt
  \end{equation}
\end{lemma}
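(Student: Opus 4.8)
The plan is to make explicit that the tensor product of norms occurring in the statement is the projective tensor norm, which the preceding remark already singled out as the relevant one. On $V^{\tensor k}$ it is given by
\begin{equation*}
  \norm{t}_1 \tensor \dots \tensor \norm{t}_1
  =
  \inf \Big\{ \textstyle\sum_j \norm{v_j^{(1)}}_1 \cdots \norm{v_j^{(k)}}_1
  \ \Big|\ t = \sum_j v_j^{(1)} \tensor \dots \tensor v_j^{(k)} \Big\} \glkomma
\end{equation*}
the infimum running over all finite representations of $t$ as a sum of elementary tensors. I would prove the claimed identity by establishing the two inequalities separately.

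The inequality \anfa$\leq$\anfe\ is immediate: the coordinate expansion $t = \sum_{i_1, \dots, i_k} t_{i_1 \cdots i_k}\, e_{i_1} \tensor \dots \tensor e_{i_k}$ is itself such a representation, and since $\norm{e_i}_1 = 1$ each summand contributes exactly $\abs{t_{i_1 \cdots i_k}}$ to the defining sum. Hence the infimum is bounded above by $\sum_{i_1, \dots, i_k} \abs{t_{i_1 \cdots i_k}}$.

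The harder inequality \anfa$\geq$\anfe\ is the main obstacle, because a priori some cleverly chosen representation could be cheaper, and one must exclude this uniformly over all representations. My approach would be to exhibit a single linear functional $\phi$ on $V^{\tensor k}$ that is a contraction for the projective norm yet detects the full $\ell^1$-mass of $t$. Writing $\sigma(z) = z / \abs z$ for $z \neq 0$ and $\sigma(0) = 0$, I would define $\phi$ on the product basis by $\phi(e_{i_1} \tensor \dots \tensor e_{i_k}) = \overline{\sigma(t_{i_1 \cdots i_k})}$, so that the elementary computation $z\,\overline{\sigma(z)} = \abs z$ gives $\phi(t) = \sum_{i_1, \dots, i_k} \abs{t_{i_1 \cdots i_k}}$.

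It then remains to check that $\phi$ is a contraction on elementary tensors, i.e.\ $\abs{\phi(v_1 \tensor \dots \tensor v_k)} \leq \norm{v_1}_1 \cdots \norm{v_k}_1$. Expanding each $v_m = \sum_i v_{m,i} e_i$ and using $\abs{\phi(e_{i_1} \tensor \dots \tensor e_{i_k})} \leq 1$ bounds the left-hand side by $\sum_{i_1, \dots, i_k} \abs{v_{1,i_1}} \cdots \abs{v_{k,i_k}}$, which factors as the product $\norm{v_1}_1 \cdots \norm{v_k}_1$ of the individual $\ell^1$-norms. Applying $\phi$ to an arbitrary representation $t = \sum_j v_j^{(1)} \tensor \dots \tensor v_j^{(k)}$ and using the triangle inequality together with this contraction bound termwise yields $\sum_{i_1, \dots, i_k}\abs{t_{i_1 \cdots i_k}} = \phi(t) \leq \sum_j \norm{v_j^{(1)}}_1 \cdots \norm{v_j^{(k)}}_1$; taking the infimum over representations gives \anfa$\geq$\anfe, and combined with \anfa$\leq$\anfe\ this proves the lemma. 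I would finally remark that the contraction argument uses only the factorization of the $\ell^1$-norm over the product basis, so it applies verbatim in the infinite dimensional setting mentioned in the preceding remark.
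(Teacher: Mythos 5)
Your proof is correct. The paper actually states this lemma without any proof at all (it is introduced only with the remark that ``the tensor product of $1$-norms has a particularly easy form''), so there is no argument of the authors' to compare against; what you have written is the standard duality computation that justifies the claim. Your identification of $\norm{\cdot}_1 \tensor \dots \tensor \norm{\cdot}_1$ with the projective tensor norm is the right reading of the paper's conventions (it is confirmed by the remark preceding the lemma, which refers to the projective tensor product in the infinite dimensional case, and by the reference to the $T_R$-topology construction). Both halves of your argument are sound: the coordinate expansion gives the upper bound for free, and the norming functional $\phi$ with $\phi(e_{i_1} \tensor \dots \tensor e_{i_k}) = \overline{\sigma(t_{i_1 \cdots i_k})}$ is a contraction on elementary tensors precisely because the $\ell^1$-norm factors over the product basis, which yields the reverse inequality after taking the infimum over representations. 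Your closing observation that the argument survives in infinite dimensions is also accurate and matches the paper's own remark.
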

We write the symmetric tensor product as a simple product and 
use the convention that $x_1 \cdots x_n = \frac 1 {n!}
\sum_{\sigma \in S_n} x_{\sigma(1)} \tensor \dots \tensor
x_{\sigma(n)}$, so we have
\begin{equation}
  (C\norm\cdot_1)_R \bigg(\sum_{I \in 
  \mathbb N_0^n} a_{I}e_{1}^{I_1} 
  \dots 
  e_{n}^{I_n} 
  \bigg) = \sum_{I \in 
  	\mathbb N_0^n} (\abs I !)^R C^{\abs I} \abs{a_{I}} 
  \glpunkt
\end{equation}
Here we use multiindices $I = (I_1, \dots, I_n) \in \mathbb N_0^n$ and $\abs I 
= I_1 + \dots + I_n$.
\begin{corollary}
	\label{theo:continuity:estimateformonomials}
	Let $V$ be a finite dimensional vector space with basis $e_1, \dots, e_n$ 
	and $\norm\cdot_1$ the 1-norm with respect to this basis. Then to prove 
	continuity of a linear map $f : S_R^\bullet(V) \to W$ where $W$ is some 
	locally	convex vector space, it suffices to find for each continuous 
	seminorm $p$ on $W$ a constant $C_p >0$ such that
	\begin{equation} \label{eq:continuityestimateformonomials}
	p(f(e_{1}^{I_1}\dots e_n^{I_n})) \leq 
	(C_p\norm\cdot_1)_R(e_1^{I_1} 
	\dots e_n^{I_n}) 
	\end{equation}
	holds for any multiindex $I = (I_1, \dots, I_n) \in \mathbb N_0^n$.
\end{corollary}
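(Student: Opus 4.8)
The plan is to reduce the continuity of $f$ to the monomial estimate \eqref{eq:continuityestimateformonomials} by exploiting the explicit $\ell^1$-type description of the seminorms $(C\norm{\cdot}_1)_R$ on $S_R^\bullet(V)$. Recall that a linear map $f \colon S_R^\bullet(V) \to W$ is continuous if and only if for every continuous seminorm $p$ on $W$ there is a continuous seminorm $q$ on $S_R^\bullet(V)$ with $p(f(s)) \leq q(s)$ for all $s \in S_R^\bullet(V)$. As observed above, the seminorms $(C\norm{\cdot}_1)_R$ with $C \geq 1$ form a fundamental system for the $T_R$-topology, so it suffices to produce such a $q$ of this form; concretely I will take $q = (C_p\norm{\cdot}_1)_R$.

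First I would evaluate the right-hand side of \eqref{eq:continuityestimateformonomials} explicitly. For a single monomial $e_1^{I_1} \cdots e_n^{I_n}$ only the coefficient with multiindex $I$ is nonzero, and it equals $1$, so the displayed formula for $(C\norm{\cdot}_1)_R$ on the symmetric algebra gives
\[
  (C_p\norm{\cdot}_1)_R(e_1^{I_1} \cdots e_n^{I_n}) = (\abs I !)^R C_p^{\abs I} .
\]
Thus the hypothesis states precisely that $p(f(e_1^{I_1} \cdots e_n^{I_n})) \leq (\abs I !)^R C_p^{\abs I}$ for every multiindex $I \in \mathbb N_0^n$. Since $(C\norm{\cdot}_1)_R$ is nondecreasing in $C$, we may enlarge $C_p$ to assume $C_p \geq 1$ without affecting the validity of this estimate, so that $q = (C_p\norm{\cdot}_1)_R$ indeed lies in the chosen fundamental system.

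It then remains to pass from monomials to arbitrary elements by linearity. Writing a general $s \in S_R^\bullet(V)$ as the finite sum $s = \sum_{I \in \mathbb N_0^n} a_I\, e_1^{I_1} \cdots e_n^{I_n}$, the linearity of $f$ together with subadditivity and absolute homogeneity of $p$ yields
\[
  p(f(s)) \leq \sum_{I} \abs{a_I}\, p\big(f(e_1^{I_1} \cdots e_n^{I_n})\big) \leq \sum_{I} \abs{a_I}\, (\abs I !)^R C_p^{\abs I} = (C_p\norm{\cdot}_1)_R(s),
\]
where the final equality is again the explicit formula for the seminorm. This is exactly the desired continuity estimate with $q = (C_p\norm{\cdot}_1)_R$, so $f$ is continuous. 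Since elements of the symmetric algebra are finite linear combinations of monomials, every sum above is finite and no convergence question arises. I do not expect a genuine obstacle here: the only point that must be used is that $(C\norm{\cdot}_1)_R$ is literally the weighted $\ell^1$-norm with weights $(\abs I !)^R C^{\abs I}$ in the monomial basis, which turns the triangle inequality into the claimed bound.
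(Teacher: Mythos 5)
Your proof is correct and follows essentially the same route as the paper: expand a general element in the monomial basis, apply linearity of $f$ and the triangle inequality for $p$, invoke the monomial estimate, and reassemble using the explicit weighted $\ell^1$ form of $(C_p\norm{\cdot}_1)_R$ from \autoref{theo:continuity:onenormprojectivetensorproduct}. The extra remarks about enlarging $C_p$ to be at least $1$ and about the fundamental system of seminorms are harmless and consistent with the discussion preceding the corollary.
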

\begin{proof}
	It follows from \eqref{eq:continuityestimateformonomials} and 
	\autoref{theo:continuity:onenormprojectivetensorproduct} that
	\begin{align*}
	p\bigg(f \bigg(\sum_{I \in \mathbb N_0^n} 
	a_{I}e_1^{I_1} \dots 
	e_n^{I_n} 
	\bigg) \bigg)
	&\leq \sum_{I \in \mathbb N_0^n}  
		\abs{a_I} p (f(e_1^{I_1} \dots 
		e_n^{I_n} )) \\
	&\leq \sum_{I \in \mathbb N_0^n}  
	\abs{a_{I}} (C_p \norm \cdot_1)_R (e_1^{I_1} 
	\dots 
	e_n^{I_n} ) \\
	&= (C_p \norm \cdot_1)_R \bigg( \sum_{I \in \mathbb 
	N_0^n}  
	a_{I} e_1^{I_1} \dots 
	e_n^{I_n} \bigg)
	\glkomma
	\end{align*}
	where we assume that only finitely many coefficients $a_I$ are non-zero, so 
	that all sums are in fact finite.
\end{proof}
We need the following well-known estimate.
\begin{lemma} 
  \label{theo:continuity:decreasingfactorialgrowsasfactorial}
  For a fixed compact disc $D_r(z_0) = \lbrace z \in \mathbb C \mid
  \abs{z-z_0} \leq r \rbrace$ with $D_r(z_0) \cap \mathbb N_0 =
  \emptyset$ there are constants $0 < C_-\leq 1 \leq C_+$ depending on
  $r$ and $z_0$ such that for all $\delta \in \mathbb N$ and $z \in
  D_r(z_0)$ we have
  \begin{equation}
    \delta! \, C_-^\delta \leq \abs{z (z-1) \cdots (z-\delta +1) } \leq \delta! \, C_+^\delta \glpunkt
  \end{equation}
\end{lemma}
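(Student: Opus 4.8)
The plan is to factor the falling factorial against $\delta!$ and reduce the whole statement to a uniform two-sided bound on the individual factors. Writing $\delta! = \prod_{k=0}^{\delta-1}(k+1)$, I would rewrite the quantity of interest as
\begin{equation*}
\frac{\abs{z(z-1)\cdots(z-\delta+1)}}{\delta!} = \prod_{k=0}^{\delta-1} g_k(z), \qquad g_k(z) := \frac{\abs{z-k}}{k+1}.
\end{equation*}
It then suffices to produce constants $0 < c_- \leq 1 \leq c_+$, independent of both $k$ and $\delta$, such that $c_- \leq g_k(z) \leq c_+$ for all $k \in \mathbb N_0$ and all $z \in D_r(z_0)$: taking $C_\pm := c_\pm$ immediately gives the claim, since a product of $\delta$ such factors then lies in $[c_-^\delta, c_+^\delta]$.

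Each factor $g_k$ is continuous and strictly positive on the compact disc $D_r(z_0)$. Positivity is exactly where the hypothesis $D_r(z_0) \cap \mathbb N_0 = \emptyset$ enters: it guarantees $z \neq k$, hence $\abs{z-k} > 0$, for every $z$ in the disc and every $k \in \mathbb N_0$. Consequently each $g_k$ attains a strictly positive minimum and a finite maximum on $D_r(z_0)$ by compactness. The only remaining issue is to make these bounds uniform over all $k$, which I would handle by showing $g_k \to 1$ uniformly on $D_r(z_0)$ as $k \to \infty$. The reverse triangle inequality (applied to $\abs{z-k}$ and $\abs{-k} = k$) gives $\abs{\abs{z-k} - k} \leq \abs{z} \leq \abs{z_0} + r$, whence
\begin{equation*}
\abs{g_k(z) - 1} = \frac{\abs{\abs{z-k} - (k+1)}}{k+1} \leq \frac{\abs{z_0} + r + 1}{k+1} \to 0
\end{equation*}
uniformly in $z$ as $k \to \infty$. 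Therefore there is some index $K$ beyond which $\tfrac12 \leq g_k(z) \leq 2$ for all $z \in D_r(z_0)$, while the finitely many initial factors $g_0, \dots, g_{K-1}$ are uniformly bounded above and below by the compactness argument of the previous step. Combining the two ranges and then shrinking the lower bound and enlarging the upper bound if necessary to arrange $c_- \leq 1 \leq c_+$ completes the argument.

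The only genuine content here is the uniform control of the factors $g_k$; I expect no real obstacle beyond the bookkeeping of constants. The tail $k \geq K$ is dispatched by the elementary uniform limit $g_k \to 1$, the finite head is dispatched by compactness and the non-vanishing hypothesis, and the passage from the per-factor bounds to the product estimate is then purely multiplicative.
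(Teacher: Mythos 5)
Your argument is correct and complete: the factorization $\abs{z(z-1)\cdots(z-\delta+1)}/\delta! = \prod_{k=0}^{\delta-1}\abs{z-k}/(k+1)$, the non-vanishing of each factor via $D_r(z_0)\cap\mathbb N_0=\emptyset$, and the uniform convergence $g_k\to 1$ from the reverse triangle inequality together give the uniform two-sided bound on each factor, from which the claim follows multiplicatively. The paper itself offers no proof (it labels the lemma as well-known), so there is nothing to contrast with; your write-up supplies exactly the standard argument one would expect.
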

There are several ways to obtain continuity estimates for $*_\hbar$
and it is not quite clear which one is best suited for
generalization. Therefore we define two topologies below, that turn out to be 
equivalent but can both be used to obtain continuity estimates.
\begin{definition}[\boldmath Reduction and quotient-$T_R$-topologies]
\begin{definitionlist}
 \item   Let $\mynorm$ be the $1$-norm with respect to the basis $\lbrace \hat U,
  \hat V, \hat{\overline U}, \hat{\overline V} \rbrace$ of $(\mathbb C^{2})^*$ 
  (thinking of $\mathbb C^2$ as a real vector space). The corresponding 
  $T_R$-topology on $\Sym^\bullet((\mathbb C^{2})^*) \cong 
  \finitesspace$ induces a subspace topology on $\finiteinvsspace$ and thus a 
  quotient topology on $\finiteinvsgroup$, called the \emph{\reductionTR}.
  \item Similarly, the $1$-norm $\norm \cdot_1$ with respect to the basis $\lbrace
  \toFunction H, \toFunction X, \toFunction Y\rbrace$ on $\slc^*$ induces a 
  $T_R$-topology on $\Sym^\bullet(\slc^*)\cong \Pol(\slc)$ and we call its 
  quotient topology on $\Pol(\slc)/\vanideal$ the \emph{\quotientTR}.
\end{definitionlist}
\end{definition}
Here $\vanideal$ is the ideal of polynomials
vanishing on the orbit, which is generated by
$\toFunctionPower{H}{2}+\toFunction X \toFunction Y + \lambda^2$.

\begin{proposition} \label{theo:quotandredtopologiesagree}
	The \reductionTR\
	and \quotientTR[2R]\ agree.
\end{proposition}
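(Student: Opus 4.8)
The plan is to realise both topologies as quotient topologies of the same algebra $\finiteinvsgroup$ and to compare them through the isomorphism relating the two presentations. Write $\pi_{\mathrm{red}}\colon\finiteinvsspace\to\finiteinvsgroup$ for the quotient by $\langle\hat U\hat{\overline U}+\hat V\hat{\overline V}-1\rangle$, where $\finiteinvsspace$ carries the subspace topology induced from $(\finitesspace,T_R)$, and $\pi_{\mathrm{quot}}\colon\Pol(\slc)\to\Pol(\slc)/\vanideal\cong\finiteinvsgroup$ for the quotient by $\vanideal$, where $\Pol(\slc)$ carries $T_{2R}$. By \eqref{eq:HXYasABC} the assignment $\toFunction H\mapsto\I\lambda\hat A$, $\toFunction X\mapsto\I\lambda\hat B$, $\toFunction Y\mapsto\I\lambda\hat C$ defines an algebra isomorphism $\Phi\colon\Pol(\slc)\to\mathbb C[\hat A,\hat B,\hat C]\subseteq\finiteinvsspace$ which doubles degrees (the $\hat A,\hat B,\hat C$ are quadratic in the generators of $\finitesspace$), carries $\vanideal$ onto $\langle\hat A^2+4\hat B\hat C-1\rangle=\langle\hat D^2-1\rangle\subseteq\langle\hat D-1\rangle$ (cf.~\autoref{theo:discriptionPR}), and satisfies $\pi_{\mathrm{quot}}=\pi_{\mathrm{red}}\circ\Phi$. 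As both topologies are quotient topologies, their equality is equivalent to the identity on $\finiteinvsgroup$ being continuous both ways, and by the universal property of quotient topologies each direction reduces to one seminorm estimate between the total spaces $\Pol(\slc)$ and $\finitesspace$.

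For the inclusion \quotientTR[2R] $\supseteq$ \reductionTR\ it suffices, using $\pi_{\mathrm{quot}}=\pi_{\mathrm{red}}\circ\Phi$ and continuity of $\pi_{\mathrm{red}}$, to show that $\Phi\colon(\Pol(\slc),T_{2R})\to(\finitesspace,T_R)$ is continuous. By \autoref{theo:continuity:estimateformonomials} this comes down to a single monomial $\toFunctionPower H a(\toFunction X)^b(\toFunction Y)^c$ of degree $d=a+b+c$, which $\Phi$ sends to $(\I\lambda)^d\hat A^a\hat B^b\hat C^c$, a homogeneous element of degree $2d$ in the generators of $\finitesspace$ whose $\ell^1$-mass of coefficients is at most $\lambda^d 2^d$ (the factor $2^a$ arising from expanding $\hat A^a=(\hat U\hat{\overline U}-\hat V\hat{\overline V})^a$). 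Hence its $T_R$-seminorm is at most $((2d)!)^R (C')^{2d}\lambda^d 2^d$, and the elementary estimate $(2d)!\le 4^d (d!)^2$ turns this into $(d!)^{2R}(2\lambda 4^R (C')^2)^d$, which is exactly the $T_{2R}$-seminorm of the monomial once the constant is chosen large enough. This is the easy direction.

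The reverse inclusion \reductionTR\ $\supseteq$ \quotientTR[2R] is the main obstacle. Here it suffices to prove that $\pi_{\mathrm{red}}$ is continuous from $\finiteinvsspace$, with its subspace topology, to $\finiteinvsgroup$ with the \quotientTR[2R]. Since the subspace seminorm is additive over the basis of right-invariant monomials, one reduces (by the argument of \autoref{theo:continuity:estimateformonomials}) to a single such monomial $m=\hat U^\alpha\hat{\overline U}^\beta\hat V^\gamma\hat{\overline V}^\delta$ with $\alpha+\gamma=\beta+\delta=d$. The difficulty is to produce a $\pi_{\mathrm{quot}}$-preimage $P\in\Pol(\slc)$ of $\pi_{\mathrm{red}}(m)$ whose $T_{2R}$-norm is controlled, \emph{without} inverting the a priori badly behaved change of basis from $\hat U$-monomials to $\hat A,\hat B,\hat C,\hat D$-monomials. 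The observation that resolves this is that $m$ equals a single product $\hat B^{\beta-l}\hat C^{\alpha-l}(\hat U\hat{\overline U})^{l}(\hat V\hat{\overline V})^{\gamma-\beta+l}$ for a suitable $l$; substituting $\hat U\hat{\overline U}=\tfrac12(\hat A+\hat D)$ and $\hat V\hat{\overline V}=\tfrac12(\hat D-\hat A)$ then writes $m$ as a combination of monomials $\hat A^{a}\hat D^{d'}\hat B^{k}\hat C^{j}$ whose total $\ell^1$-coefficient mass is at most $1$.

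Setting $\hat D=1$ (that is, applying $\pi_{\mathrm{red}}$) and pulling back through $\Phi^{-1}$ then yields a representative $P\in\Pol(\slc)$ of degree at most $d$ whose coefficient mass is at most $\max(1,\lambda^{-1})^d$, so that the quotient seminorm of $\pi_{\mathrm{red}}(m)$ is bounded by $(C\norm\cdot_1)_{2R}(P)\le (d!)^{2R}(C\max(1,\lambda^{-1}))^d$. Using the trivial bound $(d!)^2\le (2d)!$, this is in turn at most $((2d)!)^R C_0^{2d}$ for a large enough constant $C_0$, i.e.~by the subspace-$T_R$-seminorm of $m$; summing over the monomials of an arbitrary element of $\finiteinvsspace$ gives continuity of $\pi_{\mathrm{red}}$. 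Combining the two inclusions yields the claimed equality. I expect the rewriting in the previous paragraph to be the genuinely delicate point: everything hinges on expressing a right-invariant monomial as an explicit product, so that the passage between the two monomial systems costs only a geometric factor $C^{d}$, while the shift from $R$ to $2R$ is forced precisely by the fact that $\hat A,\hat B,\hat C$ are quadratic in the generators of $\finitesspace$.
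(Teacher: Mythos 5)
Your argument is correct and is essentially the paper's own proof in a slightly different packaging: your $\Phi$ is the paper's map $f$ (up to the $\I\lambda$ rescaling), and your rewriting of a right-invariant monomial as $\hat B^{\beta-l}\hat C^{\alpha-l}(\hat U\hat{\overline U})^{l}(\hat V\hat{\overline V})^{\gamma-\beta+l}$ followed by the substitution $\hat U\hat{\overline U}\mapsto\tfrac12(1+\hat A)$, $\hat V\hat{\overline V}\mapsto\tfrac12(1-\hat A)$ is exactly the paper's map $g$, presented as a choice of well-controlled representative rather than as an explicit linear map. The estimates ($\ell^1$-mass bounded by a geometric factor, $(2d)!\le 4^d(d!)^2$ and $(d!)^2\le(2d)!$ to trade $R$ for $2R$) coincide with those in the paper, so you have correctly identified both the delicate point and its resolution.
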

\begin{proof}
	Recall from \autoref{theo:polysareisomtoregulars} that the 
	space of 
	polynomials on the sphere is isomorphic to $\finiteinvsgroup$ with the 
	isomorphism given as in \eqref{eq:HXYasABC}. Let $W$ be the three 
	dimensional 
	vector space spanned by $\hat A$, $\hat B$ and $\hat C$, endowed with the 
	1-norm $\norm \cdot'_1$ with respect to this basis.
	Consider the tensor algebra 
	$\Sym^\bullet(W)$ with the induced $T_R$-topology, then it is clear from 
	\autoref{theo:discriptionPR} that the induced quotient topology on 
	$\Sym_R^\bullet(W) / \langle \hat A^2 + 4\hat B \hat C 
	-1\rangle$ coincides with the {\quotientTR} under the above isomorphism.

	So we would like to see that $\Sym_R^\bullet(W) / \langle \hat A^2 + 4 \hat 
	B \hat C -1 
	\rangle$ is homeomorphic to $\finiteinvsspace / \langle \hat U 
	\hat{\overline U} + \hat V \hat{\overline V} -1 \rangle$.
	To this end we define two maps $f: \Sym^\bullet(W) \to \finiteinvsspace$ 
	and $g:\finiteinvsspace \to \Sym^\bullet(W)$ by extending 
	\begin{align*} f(\hat A^\alpha \hat B^\beta \hat C^\gamma) &= (\hat U 
	\hat{\overline U} - \hat V \hat{\overline 
	V})^\alpha 
	(\hat {\overline U}\hat V)^\beta (\hat U \hat{\overline V})^\gamma \\
	g(\hat U^\alpha \hat{\overline 
	U}{}^\beta \hat V^\gamma \hat{\overline V}{}^\delta) &=  
	\left(\frac{1+\hat A}2\right)^{\alpha-\alpha\wedge\delta} 
	\left(\frac{1-\hat A}2\right)^{\delta-\alpha\wedge\delta}
	\hat B^{\beta \wedge \gamma} 
	\hat C^{\alpha 
		\wedge \delta}
	\end{align*} linearly. Here we used the shorthand $m \wedge n = \min\lbrace 
	n, m\rbrace$. It is easy to check that these maps descend to bijections on 
	the quotients, so we shall be done if we can show that they are continuous. 
	Set $d = \alpha + \beta + \gamma$, then
\begin{align*}
(C\mynorm)_{R} (f(\hat A^\alpha \hat B^\beta \hat C^\gamma)) 
&= (C \mynorm)_R\left(
\sum_{i=0}^\alpha \binom \alpha i (\hat U \hat{\overline U})^i (-\hat V  
\hat{\overline V})^{\alpha-i} 
(\hat{\overline U}\hat V)^\beta (\hat U \hat{\overline V})^\gamma 
\right) \\
&\leq \sum_{i=0}^\alpha \binom \alpha i ((2d)!)^R C^{2 d} \\
&= 2^\alpha \binom{2 d}{d}^R (d!)^{2 R} C^{2 d} \\
&\leq 2^{d} 2^{2 d R} (d!)^{2R} C^{2 d}\\
&= (2^{2 R+1} C^2 \norm\cdot'_1)_{2 R} (\hat A^\alpha \hat B^\beta \hat 
C^\gamma) 
\end{align*}
and the continuity of $f$ follows from 
\autoref{theo:continuity:estimateformonomials}. Similarly, setting $d' = \alpha 
+ \beta + \gamma +\delta$ and using that 
$\alpha + \gamma = \beta + \delta$ for elements of $\finiteinvsspace$ we obtain
\begin{align*}
(C \norm\cdot'_1)_{2 R} (g(\hat U^\alpha \hat{\overline 
U}{}^\beta \hat V^\gamma \hat{\overline V}{}^\delta)) &= 
(C \norm\cdot'_1)_{2 R} \left(
\left(\frac{1+\hat A}2\right)^{\alpha-\alpha\wedge\delta} 
\left(\frac{1-\hat A}2\right)^{\delta-\alpha\wedge\delta}
\hat B^{\beta \wedge \gamma}\hat C^{\alpha 
	\wedge \delta} \right) \\
&\leq (C \norm\cdot'_1)_{2 R} \left( \hat A^{\abs{\alpha-\delta}}
\hat B^{\beta \wedge \gamma} \hat C^{\alpha 
	\wedge \delta} 
\right) \\
&= ((d'/2)!)^{2 R} C^{d'/2} \\
&\leq (d'!)^R C^{d'/2} \\
&= (C^{1/2} \mynorm)_R(\hat U^\alpha \hat{\overline 
U}{}^\beta \hat V^\gamma \hat{\overline V}{}^\delta) \glpunkt
\end{align*}
Here we used that $\alpha + \delta - 2\alpha \wedge \delta = 
\abs{\alpha-\delta}$ and that 
\begin{align*} 
  2 (\beta \wedge \gamma + \alpha \wedge \delta + \abs{\alpha-\delta}) 
  &= 
  2 \beta \wedge \gamma + \abs{\beta-\gamma} + 2 \alpha 
   \wedge \delta + \abs{\alpha-\delta}
   \\
&= \beta + \gamma + \alpha + \delta 
\\
&= d' \glpunkt 
\end{align*}

\end{proof}
We want to finish this subsection with the following easy observation.
\begin{proposition}
  The \quotientTR\ and the \reductionTR\ are Hausdorff.
\end{proposition}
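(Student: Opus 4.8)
The plan is to reduce everything to the standard fact that a quotient of a locally convex space by a linear subspace is Hausdorff if and only if that subspace is closed, and then to recognize the ideal we quotient by as an intersection of kernels of continuous functionals. Since \autoref{theo:quotandredtopologiesagree} identifies the \reductionTR\ with the \quotientTR[2R] (under the isomorphism of \autoref{theo:polysareisomtoregulars}), it is enough to establish the Hausdorff property for the \quotientTR\ for every $R \geq 0$ and then transport it along that identification.

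First I would treat the \quotientTR. For $R \geq 0$ the space $\Sym^\bullet_R(\slc^*) \cong \Pol(\slc)$ is a Hausdorff locally convex space (for a fixed norm, the seminorm $(\norm\cdot_1)_R$ is in fact a norm on $\Sym^\bullet(\slc^*)$), and each evaluation functional $\delta_x \colon \Sym^\bullet_R(\slc^*) \to \mathbb C$, $p \mapsto p(x)$, at a point $x \in \slc$ is continuous, as recorded just after the introduction of the $T_R$-topology. By definition $\vanideal$ is the ideal of polynomials vanishing on the orbit, so
\begin{equation*}
  \vanideal = \bigcap_{x \in \mathbb S^2_\lambda} \ker \delta_x \glpunkt
\end{equation*}
Each $\ker\delta_x$ is closed, being the kernel of a continuous linear map, and an intersection of closed sets is closed; hence $\vanideal$ is closed in $\Sym^\bullet_R(\slc^*)$. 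Because the quotient map onto $\Pol(\slc)/\vanideal$ is a continuous open surjection, the zero subspace of the quotient is closed exactly when its preimage $\vanideal$ is, and a locally convex quotient with closed zero is Hausdorff; thus the \quotientTR\ is Hausdorff for every $R \geq 0$.

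Applying this with $2R$ in place of $R$ and using \autoref{theo:quotandredtopologiesagree} then shows that the \reductionTR\ is Hausdorff as well. (Alternatively one may argue directly: the ideal $\langle \hat U \hat{\overline U} + \hat V \hat{\overline V} - 1 \rangle$ is exactly the set of elements of $\finiteinvsspace$ vanishing on the unit sphere $\SU \subseteq \mathbb C^2$, hence the trace on $\finiteinvsspace$ of the corresponding vanishing ideal in $\finitesspace$, which is closed by the same evaluation-functional argument, so it is closed in the subspace topology.) The only point that needs care, and the step I regard as the crux, is the continuity of the point evaluations $\delta_x$ in the $T_R$-topology for $R \geq 0$; granting that — it is precisely the statement quoted above — the remaining arguments are purely formal.
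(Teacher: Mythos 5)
Your proposal is correct and follows essentially the same route as the paper: reduce the Hausdorff property to closedness of the ideal being quotiented out, recognize $\vanideal$ as the intersection of kernels of the point-evaluation functionals on the orbit, and invoke their continuity in any $T_R$-topology (recorded in the paper right after the $T_R$-topology is introduced). The paper's own proof is just a terser version of this, leaving the transfer to the \reductionTR\ implicit via \autoref{theo:quotandredtopologiesagree}, which you spell out explicitly.
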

\begin{proof}
  It suffices to prove that the ideal divided out is closed. For the 
  \quotientTR\ it is the
  intersection of the kernels of all evaluation functionals at points
  of the $2$-sphere and these functionals are continuous with respect
  to any $T_R$-topology.
\end{proof}

\subsection[\texorpdfstring{Continuity in the reduction and 
quotient-$T_R$-topologies}{Continuity in the reduction and 
quotient-TR-topologies}]{\texorpdfstring{\boldmath Continuity in the reduction 
and quotient-$T_R$-topologies}{Continuity in the reduction and 
	quotient-TR-topologies}}

In this section we will prove the continuity of $*_\hbar$ with 
respect to the \reductionTR, using the approach of Alekseev-Lachowska. We will 
outline how the approach of Karabegov can be used to prove continuity with 
respect to the \quotientTR\ and prove that the star product depends 
holomorphically on $\hbar$ on the completion.
\begin{theorem}\label{theo:continuity}
  The star product $*_\hbar$ on $\mathbb S^2_\lambda$ is continuous
  with respect to the \reductionTR\ for $R \geq 0$ if $\frac \lambda
  \hbar \notin \mathbb N$.
\end{theorem}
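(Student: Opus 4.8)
The plan is to prove joint continuity of the (non-associative) product $\prodP$ on $\finitesspace$ for the $T_R$-topology and then transport this to $*_\hbar$. Indeed $\prodR$ is the restriction of $\prodP$ to $\finiteinvsspace$ with its subspace topology, and $*_\hbar$ on $\finiteinvsgroup \cong \Pol(\mathbb S^2_\lambda)$ is exactly the product obtained by passing $\prodR$ to the quotient that defines the \reductionTR; so by the universal properties of the subspace and quotient topologies, continuity of $\prodP$ yields continuity of $*_\hbar$. Everything thus reduces to the following seminorm estimate: for each $C \geq 1$ produce $C' \geq 1$ with
\[
(C\mynorm)_R(f \prodP g) \le (C'\mynorm)_R(f)\,(C'\mynorm)_R(g) \quad\text{for all } f, g \in \finitesspace.
\]
By bilinearity, the triangle inequality, and the explicit form of $(C\mynorm)_R$ on monomial expansions (as in the proof of \autoref{theo:continuity:estimateformonomials}), it suffices to establish this when $f = e_I$ and $g = e_J$ are monomials in $\hat U, \hat{\overline U}, \hat V, \hat{\overline V}$ of degrees $i = \abs I$ and $j = \abs J$.

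Next I would expand $e_I \prodP e_J = \sum_{n \ge 0} c_n(\hbar)\,(\leftinv Y)^n e_I \cdot (\leftinv X)^n e_J$, where $c_n(\hbar) = (-1)^n \hbar^n/(n!\,\lambda(\lambda-\hbar)\cdots(\lambda-(n-1)\hbar))$. By \eqref{eq:leftinvX} and \eqref{eq:leftinvY} the operators $\leftinv X, \leftinv Y$ send each generator to $\pm$ a generator or to $0$, hence preserve degree, and by \autoref{lemma:alekseevderivatives}\refitem{lemma:alekseevderivatives:i} they square to zero on generators. The Leibniz rule then shows that $(\leftinv Y)^n e_I$ is a sum of at most $\binom i n$ degree-$i$ monomials, each with coefficient $\pm n!$, and similarly $(\leftinv X)^n e_J$ is a sum of at most $\binom j n$ degree-$j$ monomials with coefficient $\pm n!$; in particular the $n$-sum runs only up to $\min(i,j)$. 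Since every resulting product is a monomial of degree $i+j$, the triangle inequality gives
\[
(C\mynorm)_R(e_I \prodP e_J) \le ((i+j)!)^R C^{i+j} \sum_{n=0}^{\min(i,j)} \abs{c_n(\hbar)}\,(n!)^2 \binom i n \binom j n.
\]

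The decisive input is the size of $\abs{c_n(\hbar)}$. Writing $w := \lambda/\hbar$ and using $\lambda - k\hbar = \hbar(w-k)$, one finds $\abs{c_n(\hbar)} = \bigl(n!\,\abs{w(w-1)\cdots(w-n+1)}\bigr)^{-1}$. Since $\lambda > 0$ forces $w \neq 0$, the hypothesis $\lambda/\hbar \notin \mathbb N$ is precisely the statement $w \notin \mathbb N_0$, so a small disc about $w$ misses $\mathbb N_0$ and \autoref{theo:continuity:decreasingfactorialgrowsasfactorial} supplies a constant $0 < C_- \le 1$ with $\abs{w(w-1)\cdots(w-n+1)} \ge n!\,C_-^n$, whence $\abs{c_n(\hbar)} \le (n!)^{-2} C_-^{-n}$. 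This makes the factor $(n!)^2$ produced by the two strings of $n$ left-invariant derivatives cancel the $(n!)^2$ in the denominator of $c_n$, leaving
\[
(C\mynorm)_R(e_I \prodP e_J) \le ((i+j)!)^R C^{i+j} \sum_{n=0}^{\min(i,j)} C_-^{-n} \binom i n \binom j n.
\]

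From here the argument is routine: using $\binom{i+j}{i} = (i+j)!/(i!\,j!) \le 2^{i+j}$ to rewrite $((i+j)!)^R$ as $2^{R(i+j)}(i!\,j!)^R$, and bounding the finite sum $\sum_n C_-^{-n}\binom i n \binom j n$ by a constant raised to the power $i+j$, one reaches a bound of the shape $(i!\,j!)^R (C')^{i+j} = (C'\mynorm)_R(e_I)\,(C'\mynorm)_R(e_J)$ for a suitable $C' = C'(C,R,C_-)$, which is exactly the desired monomial estimate; summing against $f = \sum_I a_I e_I$, $g = \sum_J b_J e_J$ and descending to the quotient then finishes the proof. The main obstacle is conceptual rather than computational: one has to recognise that the $(n!)^2$ generated by applying $n$ copies each of $\leftinv Y$ and $\leftinv X$ is annihilated exactly by the denominator of the Alekseev-Lachowska coefficients, and that the lower bound realising this cancellation is available precisely under the assumption $\lambda/\hbar \notin \mathbb N$; everything else---the combinatorial counting of surviving Leibniz terms, the binomial bounds, and the passage to the quotient---is standard.
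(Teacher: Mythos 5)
Your proposal is correct and follows essentially the same route as the paper's proof: expand against monomials in $\hat U, \hat{\overline U}, \hat V, \hat{\overline V}$, use \autoref{lemma:alekseevderivatives} to truncate the $n$-sum and count the surviving Leibniz terms (your $\binom{i}{n}\cdot n!$ is the paper's $i!/(i-n)!$), cancel the resulting $(n!)^2$ against the Shapovalov denominator via \autoref{theo:continuity:decreasingfactorialgrowsasfactorial}, and conclude with \autoref{theo:continuity:estimateformonomials} and the passage from $\prodP$ to the quotient. No gaps; the argument matches the paper's estimate line for line.
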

\begin{proof}
  Recall from \autoref{lemma:alekseevderivatives} that both $(\leftinv X)^n$ 
  and $(\leftinv Y)^n$ vanish on
  monomials of degree $\leq n-1$ and neither of them raises the degree
  of a monomial. Set $p_{\alpha\beta\gamma\delta} = {\hat U}^\alpha 
  {\hat {\overline U}}{}^\beta {\hat V}^\gamma {\hat{\overline V}}{}^\delta$ 
  for $\alpha, \beta, \gamma, \delta \in \mathbb N_0$ and let 
  $d=\alpha+\beta+\gamma+\delta$. 
  Note 
  that $(\leftinv Y)^n p_{\alpha\beta\gamma\delta} (\leftinv X)^n 
  p_{\alpha'\beta'\gamma'\delta'}$ is a sum of at most 
  $\frac{d!}{(d-n)!} \frac{d'!}{(d'-n)!}$ such monomials of degree $d+d'$.
  So for 
  $C \geq 1$ we obtain that
  \begin{align*}
    (C \mynorm)_R(p_{\alpha\beta\gamma\delta} \prodP 
    p_{\alpha'\beta'\gamma'\delta'}) &= (C
    \mynorm)_R\left(\sum_{n=0}^\infty \frac{(-1)^n}{n!  \frac \lambda
        \hbar \left(\frac \lambda \hbar-1\right)\dots\left(\frac
          \lambda
          \hbar - (n-1)\right)} (\leftinv Y)^n p_{\alpha\beta\gamma\delta} 
          (\leftinv X)^n p_{\alpha'\beta'\gamma'\delta'}\right) \\
    &\leq \sum_{n=0}^{\min\lbrace d,d'\rbrace} \frac{1}{(n!)^2 C_-^n }
    \frac{d!}{(d-n)!} \frac{d'!}{(d'-n)!} (d+d')!^R C^{d+d'}\\
    &\leq 2^{\min\lbrace d,d'\rbrace} C_-^{-\min\lbrace d,d'\rbrace} 2^d
    2^{d'} (2^{d+d'} d! d'!)^R C^{d+d'}\\
    &\leq (2^{2+R} CC_-)^{d} (d!)^R (2^{2+R} CC_-)^{d'} (d'!)^R\\
    &= {(2^{2+R} CC_- \mynorm)}_{R}(p_{\alpha\beta\gamma\delta}) {(2^{2+R} CC_- 
    \mynorm)}_R(p_{\alpha'\beta'\gamma'\delta'})
    \glpunkt
  \end{align*}
  By using \autoref{theo:continuity:estimateformonomials} (or a similar version 
  for maps with two arguments) the continuity of $\prodP$ with respect to 
  the $T_R$-topology induced by $\mynorm$ on $\finitesspace$ follows and thus 
  the continuity of $*_\hbar$ with respect to the \reductionTR.
\end{proof}
The following theorem is obvious because of 
\autoref{theo:quotandredtopologiesagree}. However, it can also be proved 
independently by deriving a formula for $*_\hbar$ that only uses 
$\finiteinvsgroup$. (Note that $p_{W,\overline W}:=\leftinv Y W \cdot \leftinv 
X \overline W$ is an element of $\finiteinvsgroup$ for $W \in \lbrace U, V 
\rbrace$, $\overline W \in \lbrace \overline U, \overline V\rbrace$, so the 
$p_{W,\overline W}$ can be used to replace the left invariant vector fields 
applied to polynomials.)
\begin{theorem}\label{theo:continuity:ii}
  The star product $*_\hbar$ on $\mathbb S^2_\lambda$ is continuous
  with respect to the \quotientTR\ for $R \geq 0$ if $\frac \lambda
  \hbar \notin \mathbb N$.
\end{theorem}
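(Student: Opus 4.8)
The plan is to read the statement off the results already in place, since \autoref{theo:quotandredtopologiesagree} identifies the two topologies up to a rescaling of $R$. That proposition says the \reductionTR\ and the \quotientTR[2R] agree; substituting $R \mapsto R/2$, the \quotientTR\ coincides with the \reductionTR[R/2] as locally convex topologies. First I would invoke \autoref{theo:continuity}, which gives continuity of $*_\hbar$ with respect to the \reductionTR[R/2] for every $R \geq 0$, provided $\frac{\lambda}{\hbar} \notin \mathbb N$. Because continuity of a bilinear map depends only on the underlying topologies, this continuity transfers verbatim to the \quotientTR. No step here presents any real obstacle: the whole argument has been packaged into \autoref{theo:quotandredtopologiesagree} together with \autoref{theo:continuity}, and as $R$ ranges over $[0,\infty)$ so does $R/2$, covering all $R \geq 0$.

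For completeness I would also sketch the promised self-contained route, which never passes through the \reductionTR\ but instead produces continuity estimates directly on $\Pol(\slc)/\vanideal \cong \finiteinvsgroup$. The starting point is the observation recorded before the theorem that each block $p_{W,\overline W} = \leftinv Y W \cdot \leftinv X \overline W$ with $W \in \{U,V\}$, $\overline W \in \{\overline U, \overline V\}$ already lies in $\finiteinvsgroup$: from \eqref{eq:leftinvX} and \eqref{eq:leftinvY} one computes that these four quantities equal, up to sign, $B$, $C$, $\frac{1-A}{2}$ and $\frac{1+A}{2}$. The idea is to expand the products $(\leftinv Y)^n \hat p \cdot (\leftinv X)^n \hat q$ occurring in \eqref{eq:alekseev:BforSU2} by the Leibniz rule, using \autoref{lemma:alekseevderivatives} so that each generator is hit at most once, and then to reorganize the resulting sum of monomials into products of the invariant blocks $p_{W,\overline W}$ with undifferentiated invariant factors. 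This yields a formula for $*_\hbar$ living entirely inside $\finiteinvsgroup$.

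With such a formula available, continuity in the \quotientTR\ follows exactly as in the proof of \autoref{theo:continuity}: one bounds the number of monomials produced by the expansion by the falling factorials $\frac{d!}{(d-n)!}\frac{d'!}{(d'-n)!}$, controls the coefficients of $F_\hbar$ by \autoref{theo:continuity:decreasingfactorialgrowsasfactorial} (for which $\frac{\lambda}{\hbar} \notin \mathbb N$ guarantees a disc around $\frac{\lambda}{\hbar}$ disjoint from $\mathbb N_0$), and reduces to monomials through \autoref{theo:continuity:estimateformonomials}. The main obstacle on this independent route is the combinatorial bookkeeping of pairing the differentiated generators coming from $\hat p$ with those from $\hat q$ into the invariant quantities $p_{W,\overline W}$ while tracking the degree, so that the factorial growth still matches the $T_R$-seminorm on the quotient. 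Since the topological argument already settles the theorem, I would present this second route only as a remark.
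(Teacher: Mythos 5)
Your proposal is correct and coincides with the paper's own argument: the paper likewise declares the theorem immediate from \autoref{theo:quotandredtopologiesagree} combined with \autoref{theo:continuity} (the rescaling $R \mapsto R/2$ being the only content), and it mentions the same alternative route via the invariant blocks $p_{W,\overline W}$ only as a parenthetical remark without carrying it out. Your verification that the $p_{W,\overline W}$ equal $B$, $C$, $\pm\frac{1\mp A}{2}$ up to sign is a correct elaboration of that remark, but the first paragraph alone already settles the theorem exactly as in the paper.
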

According to \autoref{theo:continuity} the star product $*_\hbar$ is
continuous on $\Pol(\mathbb{S}^2_\lambda)$ endowed with the
{\reductionTR} for $R \geq 0$. Extend $*_\hbar$ to the completion
$\Polcompleteds{R}{\mathbb{S}^2_\lambda}$, still denoted by $*_\hbar$.
\begin{lemma}
  For all $x \in \mathbb S_\lambda^2$ the evaluation functional
  $\ev^{\mathbb S}_x : \Pol(\mathbb S_\lambda^2) \to \mathbb C$,
  $p \mapsto p(x)$ is continuous in the \reductionTR .
\end{lemma}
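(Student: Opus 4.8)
The plan is to reduce the continuity of $\ev^{\mathbb S}_x$ to the continuity of point evaluations on the free symmetric algebra $\finitesspace = \Sym^\bullet((\mathbb C^2)^*)$, which is already available. By \autoref{theo:discriptionPR} and the definition of the \reductionTR, the space $\finiteinvsgroup \cong \Pol(\mathbb S_\lambda^2)$ carries the topology obtained by first restricting the $T_R$-topology of $\finitesspace$ to the subspace $\finiteinvsspace$ and then passing to the quotient by the ideal $\langle \hat U \hat{\overline U} + \hat V \hat{\overline V} - 1 \rangle$. Writing $q \colon \finiteinvsspace \to \finiteinvsgroup$ for the quotient map, the universal property of the quotient topology shows that $\ev^{\mathbb S}_x$ is continuous on $\finiteinvsgroup$ if and only if the pullback $\ev^{\mathbb S}_x \circ q$ is continuous for the subspace topology on $\finiteinvsspace$. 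It therefore suffices to identify this pullback and to check its continuity on $\finiteinvsspace$.

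First I would compute the pullback explicitly. Choose any $k \in \SU$ with $\psi_\mu(k) = x$ and let $(u,v) \in \mathbb C^2$ be the point on the unit sphere such that $k$ has the form \eqref{eq:SU2:FormOfElements}. Tracking the identifications of \autoref{theo:polysareisomtoregulars} and \autoref{theo:discriptionPR}, the map $q$ is nothing but restriction of a polynomial from $\mathbb C^2$ to the orbit, so that $\bigl(\ev^{\mathbb S}_x \circ q\bigr)(p) = p(u,v)$ for every $p \in \finiteinvsspace$. Since the generators $\hat A, \hat B, \hat C, \hat D$ of $\finiteinvsspace$ are invariant under the $\U$-action on $\mathbb C^2$ by componentwise multiplication, the value $p(u,v)$ does not depend on the choice of $k$ in the fibre over $x$; hence the pullback is well-defined and equals the restriction to $\finiteinvsspace$ of the point-evaluation functional $\delta_{(u,v)}$ on $\finitesspace$, where we view $(u,v)$ as an element of $((\mathbb C^2)^*)^* = \mathbb C^2$.

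It then remains to invoke the continuity of $\delta_{(u,v)}$ on $\Sym^\bullet_R((\mathbb C^2)^*)$ for $R \geq 0$, which was recorded right after the introduction of the $T_R$-topology and which also follows at once from \autoref{theo:continuity:estimateformonomials}: on a monomial one has $\bigl|\delta_{(u,v)}(\hat U^{I_1} \hat{\overline U}{}^{I_2} \hat V^{I_3} \hat{\overline V}{}^{I_4})\bigr| = \abs{u}^{I_1+I_2} \abs{v}^{I_3+I_4} \leq C^{\abs{I}}$ with $C = \max\{1, \abs{u}, \abs{v}\}$, while $(C\mynorm)_R$ of that monomial is $(\abs{I}!)^R C^{\abs{I}} \geq C^{\abs{I}}$. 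Since the restriction of a continuous functional to a subspace is continuous for the subspace topology, $\ev^{\mathbb S}_x \circ q = \delta_{(u,v)}\big|_{\finiteinvsspace}$ is continuous, and by the reduction above $\ev^{\mathbb S}_x$ is continuous for the \reductionTR.

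The only step requiring genuine care is the bookkeeping in the middle paragraph: one must follow the chain $\Pol(\mathbb S_\lambda^2) \cong \finiteinvsgroup \cong \finiteinvsspace / \langle \hat U \hat{\overline U} + \hat V \hat{\overline V} - 1 \rangle$ and verify that evaluation on the orbit corresponds precisely to point-evaluation on $\mathbb C^2$ composed with $q$, together with the independence of the chosen preimage $k$. Once this identification is secured the argument is purely formal, resting only on the universal properties of the subspace and quotient topologies together with the already-established continuity of point evaluations on the free symmetric algebra.
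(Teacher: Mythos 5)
Your proposal is correct and follows exactly the paper's own (much terser) argument: pick $k$ with $\psi_\mu(k)=x$, observe that $\ev^{\mathbb S}_x$ is the map induced on the quotient by the continuous evaluation $\ev_{(u,v)}$ on $\finitesspace$, and conclude by the universal property of the quotient topology. The extra bookkeeping you supply (well-definedness via $\U$-invariance, the explicit monomial estimate for $\delta_{(u,v)}$) is all consistent with what the paper leaves implicit.
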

\begin{proof}
  Take any $k =\left(\begin{smallmatrix}
  u & - \overline v \\ v & \overline u
  \end{smallmatrix}\right) \in K$ with $\psi_\mu(k) =x$. Then $\ev^{\mathbb 
  S}_x$ is 
  the induced quotient map of the continuous map
  $\ev_{(u,v)}$ and therefore continuous.
\end{proof}
Recall that $\poleset = \lbrace 0, \lambda, \frac \lambda 2 , \frac \lambda 
3,
\frac \lambda 4, \dots \rbrace$.
\begin{theorem} \label{theo:continuity:karabegovI:dependenceOnHbar}
  For fixed $p, q \in \Polcompleteds
  R {\mathbb{S}^2_\lambda}$ and $x \in \mathbb{S}^2_\lambda$ the
  function
  \begin{align*}
    \mathbb C \setminus \poleset &\longrightarrow \mathbb C \\
    \hbar &\longmapsto   p*_\hbar q(x)
  \end{align*}
  is holomorphic in $\hbar$.\fxnote{Check whether all indices
    etc.  should be $\mu$ or $\lambda$}
\end{theorem}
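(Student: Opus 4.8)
The plan is to establish the claim first for genuine polynomials, where it is immediate from the Alekseev--Lachowska formula \eqref{eq:alekseev:BforSU2}, and then to transfer holomorphy to the completion by a Weierstrass locally-uniform-limit argument. For $p, q \in \Pol(\mathbb S^2_\lambda)$, if $\hat p$ and $\hat q$ have $A,B,C$-degrees $d$ and $d'$, then \autoref{lemma:alekseevderivatives} shows $(\leftinv Y)^n \hat p = 0$ for $n > d$ and $(\leftinv X)^n \hat q = 0$ for $n > d'$, so only finitely many terms of \eqref{eq:alekseev:BforSU2} survive and
\begin{equation*}
  (p *_\hbar q)(x) = \sum_{n=0}^{\min\{d,d'\}} \frac{(-1)^n \hbar^n}{n!\,\lambda(\lambda-\hbar)\cdots(\lambda-(n-1)\hbar)} \bigl((\leftinv Y)^n \hat p \cdot (\leftinv X)^n \hat q\bigr)(x) \glpunkt
\end{equation*}
Each summand is rational in $\hbar$ with poles only at $\hbar = \lambda/k$, $k = 1, \dots, n-1$, all lying in $\poleset$; hence the finite sum is holomorphic on $\mathbb C \setminus \poleset$.

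The crux is to upgrade the pointwise continuity of \autoref{theo:continuity} to constants that are uniform in $\hbar$ over an arbitrary compact set $\mathcal K \subseteq \mathbb C \setminus \poleset$. First I would observe that, since $0 \in \poleset$, the image $\{\lambda/\hbar : \hbar \in \mathcal K\}$ is a compact subset of $\mathbb C$ which avoids $\mathbb N_0$ --- indeed $\hbar \notin \poleset$ means precisely $\lambda/\hbar \notin \mathbb N$, and $\lambda/\hbar \neq 0$. Covering this image by finitely many discs and keeping the least favourable constants from \autoref{theo:continuity:decreasingfactorialgrowsasfactorial} then produces a single $C_- \in (0,1]$ valid for every $z = \lambda/\hbar$ with $\hbar \in \mathcal K$. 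Rerunning the estimate in the proof of \autoref{theo:continuity} with this uniform $C_-$ shows that the continuity of $*_\hbar$ in the \reductionTR\ holds with constants independent of $\hbar \in \mathcal K$; composing with the evaluation $\ev^{\mathbb S}_x$, which is continuous in the \reductionTR\ by the preceding lemma, I obtain a continuous seminorm $\mathsf s$ and a constant $M$, depending only on $\mathcal K$ and $x$, such that $\abs{(a *_\hbar b)(x)} \leq M\,\mathsf s(a)\,\mathsf s(b)$ for all $\hbar \in \mathcal K$ and all $a, b$ in the completion $\Polcompleteds R {\mathbb S^2_\lambda}$.

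Finally I would choose polynomials $p_j \to p$ and $q_j \to q$ in the \reductionTR, write $p *_\hbar q - p_j *_\hbar q_j = (p - p_j) *_\hbar q + p_j *_\hbar (q - q_j)$, and apply the uniform bound to get
\begin{equation*}
  \abs{(p *_\hbar q)(x) - (p_j *_\hbar q_j)(x)} \leq M\bigl(\mathsf s(p - p_j)\,\mathsf s(q) + \mathsf s(p_j)\,\mathsf s(q - q_j)\bigr) \glpunkt
\end{equation*}
As $\mathsf s(p - p_j)$ and $\mathsf s(q - q_j)$ tend to $0$ while $\mathsf s(p_j)$ and $\mathsf s(q)$ stay bounded, the right-hand side goes to $0$ independently of $\hbar \in \mathcal K$, so $(p_j *_\hbar q_j)(x) \to (p *_\hbar q)(x)$ uniformly on $\mathcal K$. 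Each $(p_j *_\hbar q_j)(x)$ is holomorphic on the open set $\mathbb C \setminus \poleset$ (the set $\poleset$ is closed, its only accumulation point $0$ belonging to it), so the Weierstrass convergence theorem makes the limit holomorphic on $\mathcal K$, and hence on all of $\mathbb C \setminus \poleset$ since $\mathcal K$ was arbitrary. The main obstacle is exactly the uniformity in $\hbar$ of the coefficient estimate, i.e.~the uniform control of the falling factorials $z(z-1)\cdots(z-n+1)$ with $z = \lambda/\hbar$ over compacta in $\mathbb C \setminus \poleset$; once that is secured, the remainder is the routine Weierstrass argument.
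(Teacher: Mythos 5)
Your proposal is correct and follows essentially the same route as the paper: approximate $p,q$ by polynomials, note that for polynomials $\hbar \mapsto (p_j *_\hbar q_j)(x)$ is rational with poles in $\poleset$ (you read this off the Alekseev--Lachowska formula, the paper cites \autoref{theo:karabegov:starproduct}, which is equivalent by \autoref{theo:alekseev:constructionsagree}), and then upgrade the continuity estimates of \autoref{theo:continuity} to be locally uniform in $\hbar$ via \autoref{theo:continuity:decreasingfactorialgrowsasfactorial} so that Weierstrass applies. Your write-up usefully spells out the uniformity step (compactness of $\{\lambda/\hbar : \hbar \in \mathcal K\}$ away from $\mathbb N_0$ and the bilinear splitting $p *_\hbar q - p_j *_\hbar q_j = (p-p_j)*_\hbar q + p_j *_\hbar (q - q_j)$) which the paper only asserts.
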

\begin{proof}
  Choose sequences of polynomials $p_n, q_n \in
  \Pol(\mathbb{S}^2_\lambda)$ with $p_n \to p$ and $q_n \to q$ for $n
  \to \infty$. The star product $*_\hbar$ is continuous, so we also
  have $p_n *_\hbar q_n \to p*_\hbar q$. Continuity of
  $\ev_x^{\mathbb S}$ implies that $p_n *_\hbar q_n(x) \to p*_\hbar
  q(x)$. Since by \autoref{theo:karabegov:starproduct} all maps $\hbar
  \mapsto p_n *_\hbar q_n(x)$ are rational with finitely many poles in
  $\poleset$, they are in particular holomorphic on $\mathbb C \setminus
  \poleset$. So it suffices to prove
  that $p_n *_\hbar q_n(x) \to p*_\hbar q(x)$ locally uniformly in
  $\hbar$.  But
  \autoref{theo:continuity:decreasingfactorialgrowsasfactorial} gives
  a locally uniform estimate, so all estimates in the previous section
  are locally uniformly in $\hbar$, so by applying $\ev_x^{\mathbb S}$
  the result follows.
\end{proof}

\begin{remark}[Continuity by brute force]
  We remark that \autoref{theo:continuity:ii} can be also proved
  in a more direct but complicated way. Here we only sketch the
  main steps and refer to \cite[Chapter 4]{philipp:thesis} for the details.
  First, observe that one can calculate the holomorphic derivatives $\xi_W \toFunction Z$
  for $W, Z \in \lbrace H, X, Y\rbrace$ explicitly. This calculation
  shows that one can extend them uniquely to polynomials on $\lie{su}_2$
  in such a way that $\xi_W \toFunction Z = \frac \I \lambda \toFunction
  W \toFunction Z + \text{lower order terms}$. Using this extension, one
  can define 
  \begin{equation}
  \Oplhlift: \Sym^\bullet(\slc) \to \Pol(\su)
  \end{equation}
  in the same way as 
  $\Oplh$ before, but using
  the derivatives $\xi_W \toFunction Z$ from above and
  interpreting $f_Z=\toFunction Z$ as an element of $\Pol(\su)$. It
  turns out that $\Oplhslift$ is invertible (away from the
  poles).
  \begin{lemma}\label{lemma:karabegovdeformsgutt}
    Karabegov's star product on $\Pol(\mathbb S_2^\lambda)$ can be
    written as
    \begin{equation}
      \label{eq:KarabegovGutt}
      f\at{\mathbb{S}^2_\lambda} *_\hbar g\at{\mathbb{S}^2_\lambda} 
      = 
      \Oplhlift
      \left(
      \left(\Oplhlift\right)^{-1}(f) *_{\mathrm{Gutt},1} \left(\Oplhlift\right)^{-1}(g)
      \right) 
      \At{\mathbb{S}^2_\lambda}
    \end{equation}
    for $f, g \in \Pol(\su)$. Here we denoted by $*_{\mathrm{Gutt},1}$
    the Gutt star product for $\hbar =1$ viewed as an associative
    deformation of the symmetric algebra $\Sym^\bullet(\slc)$.
  \end{lemma}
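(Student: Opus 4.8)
The plan is to read the right-hand side of \eqref{eq:KarabegovGutt} as the associative Gutt product transported along the invertible map $\Oplhlift$, and then to identify the transported product with $*_\hbar$. Write $\rho\colon\Pol(\su)\to\Pol(\mathbb S^2_\lambda)$ for the restriction to the orbit. For $\hbar$ away from the poles $\Oplhlift$ is invertible, so every $f\in\Pol(\su)$ is of the form $\Oplhlift(a)$ with $a=(\Oplhlift)^{-1}(f)\in\Sym^\bullet(\slc)$; substituting $f=\Oplhlift(a)$ and $g=\Oplhlift(b)$ turns the claimed identity into
\begin{equation*}
  \rho(\Oplhlift(a))*_\hbar\rho(\Oplhlift(b))=\rho\big(\Oplhlift(a*_{\mathrm{Gutt},1}b)\big)\glpunkt
\end{equation*}
Hence it suffices to show that $\rho\circ\Oplhlift$ is an algebra homomorphism from $(\Sym^\bullet(\slc),*_{\mathrm{Gutt},1})$ to $(\Pol(\mathbb S^2_\lambda),*_\hbar)$.

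I would deduce this from three ingredients. First, since $*_\hbar$ is by construction the pushforward of the associative product of $\univ(\slc)$ along $\Oplh$ and $\ker_{\Oplh}$ is a two-sided ideal by \autoref{lemma:karabegovII:starProdukt:RisLwithAntipode}, the map $\Oplh$ is multiplicative, $\Oplh(u)*_\hbar\Oplh(w)=\Oplh(uw)$ for all $u,w\in\univ(\slc)$ (this unwinds from $\Oplh(p)=\oplh(p)\Unit$ and the homomorphism property of $\oplh$). Second, the Gutt product at $\hbar=1$ is by definition the product of $\univ(\slc)$ pulled back to $\Sym^\bullet(\slc)$ through the PBW symmetrization isomorphism $\sigma\colon\Sym^\bullet(\slc)\to\univ(\slc)$, so that $\sigma(a*_{\mathrm{Gutt},1}b)=\sigma(a)\,\sigma(b)$. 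Third — the decisive link between the lift and the original construction — one has $\rho\circ\Oplhlift=\Oplh\circ\sigma$. Granting these, the chain
\begin{align*}
  \rho(\Oplhlift(a))*_\hbar\rho(\Oplhlift(b))
  &=\Oplh(\sigma(a))*_\hbar\Oplh(\sigma(b))
  =\Oplh\big(\sigma(a)\sigma(b)\big)\\
  &=\Oplh\big(\sigma(a*_{\mathrm{Gutt},1}b)\big)
  =\rho\big(\Oplhlift(a*_{\mathrm{Gutt},1}b)\big)
\end{align*}
proves the lemma.

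The main obstacle is the compatibility $\rho\circ\Oplhlift=\Oplh\circ\sigma$, and this is exactly where the explicit extension of the holomorphic derivatives enters. Because $\Oplhlift$ is built on $\Pol(\su)$ from the lifted operators $\xi_Z-\tfrac{\I}{\hbar}\toFunction Z$ in precisely the way that $\Oplh$ is built on $\Pol(\mathbb S^2_\lambda)$ from $\oplh(Z)=\xi_Z-\tfrac{\I}{\hbar}\toFunction Z$, establishing the identity reduces to checking that each lifted operator intertwines with its unlifted counterpart under restriction, that is, $\rho\circ\big(\xi_Z-\tfrac{\I}{\hbar}\toFunction Z\big)=\oplh(Z)\circ\rho$. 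The multiplication term matches at once, since $\rho$ merely restricts $\toFunction Z$ to the orbit; the real point is that the chosen extension of $\xi_Z$ to $\Pol(\su)$ is tangent to the orbit, i.e.~it preserves the ideal $\ker\rho$ of polynomials vanishing on $\mathbb S^2_\lambda$ and descends there to the genuine holomorphic vector field $\xi_Z$. This tangency is what the explicit formula $\xi_W\toFunction Z=\tfrac{\I}{\lambda}\toFunction W\toFunction Z+\text{lower order terms}$ is designed to secure. Once it is in place, comparing $\Oplh\circ\sigma$ with $\rho\circ\Oplhlift$ degree by degree — both being assembled from operators obeying the same $\slc$-commutation relations, so that any reordering produces matching lower-order corrections — yields the compatibility by induction on the degree of a symmetric monomial, and the diagram chase above then gives \eqref{eq:KarabegovGutt}.
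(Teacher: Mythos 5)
The paper itself contains no proof of this lemma: it is stated inside the closing remark on the ``brute force'' approach, with all details deferred to \cite[Chapter 4]{philipp:thesis}, so there is no in-text argument to compare yours against. That said, your argument is correct and is surely the intended one. Both sides of \eqref{eq:KarabegovGutt} are transports of the associative product of $\univ(\slc)$: the left side because $(\defspace, *_\hbar)$ is by definition the pushforward of $\univ(\slc)/\ker_{\Oplh}$ along $\Oplh$, so that $\Oplh(u) *_\hbar \Oplh(w) = \Oplh(uw)$, and the right side because the Gutt product at $\hbar = 1$ is the pullback of that same product through PBW symmetrization $\sigma$. Your reduction of the lemma to the single compatibility $\rho \circ \Oplhlift = \Oplh \circ \sigma$ (with $\rho$ the restriction to the orbit) is exactly the right move, and you correctly isolate the one non-formal input: the extended holomorphic derivatives on $\Pol(\su)$ must preserve the ideal generated by $\toFunctionPower{H}{2} + \toFunction X \toFunction Y + \lambda^2$ and restrict on the orbit to the genuine $\xi_Z$, which is precisely what the explicit extension described in the remark is engineered to guarantee. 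One small simplification is available at the end: once the first-order intertwining $\rho \circ \left(\xi_Z - \tfrac{\I}{\hbar}\toFunction Z\right) = \oplh(Z) \circ \rho$ is established, the identity for arbitrary symmetric monomials follows immediately by composing the intertwined operators and applying them to $\Unit$, so the closing induction on the degree is not really needed.
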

  To obtain continuity estimates we need explicit formulas for  $\Oplhslift$ and its inverse, see 
  \cite[Lemma 4.2.16]{philipp:thesis}. These formulas and the observation that
we get $\defspace =
\Pol(\mathbb{S}_\lambda^2)$ whenever $\lambda / \hbar \notin
\mathbb N_0$ allow us to prove 
the following proposition.
\begin{proposition}\label{prop:continuity:oplhishomeomorphism}
  For $R \geq 0$ and $\lambda/\hbar \notin \mathbb N_0$ the map
  $\Oplhslift$ is a homeomorphism from $\Sym^\bullet_{R+1}(\slc)$ to
  $\Sym^\bullet_R(\slc^*)$.
\end{proposition}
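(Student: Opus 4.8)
The plan is to prove continuity of both $\Oplhslift$ and its inverse; since $\Oplhslift$ is already a linear bijection $\Sym^\bullet(\slc) \to \Pol(\su) \cong \Sym^\bullet(\slc^*)$ for $\lambda/\hbar \notin \mathbb N_0$, this yields the homeomorphism. Throughout I would work with the explicit formulas for $\Oplhslift$ and $(\Oplhslift)^{-1}$ recorded in \cite[Lemma 4.2.16]{philipp:thesis}, using the basis $\{H, X, Y\}$ of $\slc$ on the source and $\{\toFunction H, \toFunction X, \toFunction Y\}$ of $\slc^*$ on the target, and I would reduce every continuity statement to a monomial estimate via \autoref{theo:continuity:estimateformonomials}.

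The key structural point is that $\Oplhslift$ is filtered: a monomial of degree $n$ is sent to a polynomial of degree at most $n$. Its top-degree part is governed by the same Shapovalov-type product that already appears in $(Y^n, X^n)_\hbar$. Indeed, $\Oplhslift$ is obtained by applying ordered products of the first-order operators $\xi_Z - \tfrac\I\hbar \toFunction Z$ to $\Unit$, and the degree-raising parts of these operators contribute a factor affine in the degree at each of the $n$ steps, so that the top-degree coefficient of the image of a degree-$n$ monomial is a nonzero multiple of $\tfrac\lambda\hbar(\tfrac\lambda\hbar - 1)\cdots(\tfrac\lambda\hbar - n+1)$. This vanishes precisely when $\tfrac\lambda\hbar \in \{0, \dots, n-1\}$, which explains why invertibility on every filtration step is equivalent to $\lambda/\hbar \notin \mathbb N_0$; moreover, by \autoref{theo:continuity:decreasingfactorialgrowsasfactorial} this product satisfies the two-sided bounds $n!\,C_-^n \leq \lvert\cdot\rvert \leq n!\,C_+^n$, so the coefficients of $\Oplhslift$ on degree $n$ grow like $n!\,C^n$, while those of $(\Oplhslift)^{-1}$ decay like $C^n/n!$.

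For the forward map I would then estimate, for a monomial $H^a X^b Y^c$ of degree $n = a+b+c$, the seminorm $(C\norm\cdot_1)_R(\Oplhslift(H^a X^b Y^c))$. Bounding the denominators below by \autoref{theo:continuity:decreasingfactorialgrowsasfactorial} and using that there are at most $\binom{n+2}{2} \leq C^n$ monomials of degree at most $n$ in three variables, the coefficients are bounded by $(C')^n n!$, so the seminorm is at most $(C')^n (n!)^{R+1} = (C'\norm\cdot_1)_{R+1}(H^a X^b Y^c)$; the extra factorial power $(n!)^{(R+1)-R}$ gained from the index shift is exactly what absorbs the factorial growth, and continuity follows from \autoref{theo:continuity:estimateformonomials}. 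For the inverse the computation is symmetric: the factorial decay $C^n/n!$ cancels against the larger weight $(n!)^{R+1}$ of the finer target topology, giving $(C\norm\cdot_1)_{R+1}((\Oplhslift)^{-1}(\toFunctionPower H a (\toFunction X)^b (\toFunction Y)^c)) \leq (C')^n (n!)^R$, again by \autoref{theo:continuity:estimateformonomials}. Continuity in both directions establishes the homeomorphism, and I would record that the constants from \autoref{theo:continuity:decreasingfactorialgrowsasfactorial} are locally uniform in $\hbar$ on compacta avoiding $\poleset$, as needed in \autoref{theo:continuity:karabegovI:dependenceOnHbar}.

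The main obstacle is the bookkeeping for the explicit formulas: one must verify that the lower-order correction terms of the extended derivatives $\xi_W \toFunction Z$ and the nested sums defining $\Oplhslift$ and its inverse contribute no growth worse than the single factorial identified above, so that the shift by exactly one in the $T_R$-index suffices in both directions and no larger shift is forced. Isolating the top-degree Shapovalov factor and controlling it two-sidedly via \autoref{theo:continuity:decreasingfactorialgrowsasfactorial} under $\lambda/\hbar \notin \mathbb N_0$ is the crux; once the factorial size of the coefficients is pinned down, the two seminorm estimates are routine applications of \autoref{theo:continuity:estimateformonomials} and \autoref{theo:continuity:onenormprojectivetensorproduct}.
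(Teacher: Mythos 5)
Your outline coincides with the only approach available from the paper: the in-paper proof of this proposition is a bare citation of \cite[Theorem 4.2.20]{philipp:thesis}, and the surrounding remark supplies exactly the ingredients you assemble --- the leading behaviour $\xi_W \toFunction Z = \frac{\I}{\lambda}\toFunction W \toFunction Z + \text{lower order}$, the resulting top-degree factor $\frac\lambda\hbar\left(\frac\lambda\hbar-1\right)\cdots\left(\frac\lambda\hbar-n+1\right)$ whose non-vanishing on every filtration step is precisely the hypothesis $\lambda/\hbar\notin\mathbb N_0$, the two-sided factorial bounds of \autoref{theo:continuity:decreasingfactorialgrowsasfactorial}, and the reduction to monomial estimates via \autoref{theo:continuity:estimateformonomials}. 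Your identification of why the index shifts by exactly one (the quotient $(n!)^{R+1}/(n!)^R$ absorbing the factorial size of the Shapovalov factor) is the right mechanism and matches why the Gutt product is only continuous for $R\geq 1$.

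The one point I would push back on is your classification of the inverse estimate as \emph{bookkeeping}: it is the mathematical core and is not automatic from what you have written. Decomposing $\Oplhslift = D + L$ with $D$ the degree-preserving part (blocks of size $\sim n!\,C^n$, invertible with inverse $\sim C^n/n!$ by the lower bound of \autoref{theo:continuity:decreasingfactorialgrowsasfactorial}) and $L$ strictly degree-lowering, the inverse is a finite Neumann series $\left(I + D^{-1}L\right)^{-1}D^{-1}$, and the entries of $L$ themselves grow factorially. One needs the ratios $\frac{m!}{j!}$ arising along a chain of degree drops $n \to j_1 \to \cdots \to j_k$ to telescope against the single factor $C^n/n!$ from $D^{-1}$, leaving $C^n/j_k!$, which is then compatible with the weight $(j_k!)^{R+1}$ of the target seminorm. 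Whether the lower-order parts of the extended derivatives actually obey the degree-graded bound that makes this telescoping work is exactly what \cite[Lemma 4.2.16]{philipp:thesis} establishes; without those explicit formulas your sketch does not close. Since the paper defers the same step to the thesis, your proposal is on equal footing with the published argument, but the inverse bound should be presented as the crux rather than as routine.
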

\begin{proof}
 See \cite[Theorem 4.2.20]{philipp:thesis}. 
\end{proof}
  The continuity of Karabegov's star product $\smash{*_\hbar}$ on $\mathbb S^2_\lambda$ 
  with respect to the \quotientTRs (if $\lambda / \hbar \notin \mathbb N_0$) 
  follows from 
  \eqref{eq:KarabegovGutt} and
  \autoref{prop:continuity:oplhishomeomorphism} since the Gutt star
  product is continuous with respect to the $T_R$-topology on
  $\Sym^\bullet(\slc)$ for $R \geq 1$ according to
  \cite{esposito.stapor.waldmann:2017a}.
\end{remark}

{
  \footnotesize
  \renewcommand{\arraystretch}{0.5}

}


\begin{thebibliography}{10}

\bibitem {alekseev.lachowska:2005a}
\textsc{Alekseev, A., Lachowska, A.: }\newblock \emph{Invariant
  {$\ast$}-products on coadjoint orbits and the {S}hapovalov pairing}.
\newblock Comment. Math. Helv.  \textbf{80} (2005), 795--810.

\bibitem {bayen.et.al:1978a}
\textsc{Bayen, F., Flato, M., Fr{{\o}}nsdal, C., Lichnerowicz, A., Sternheimer,
  D.: }\newblock \emph{Deformation Theory and Quantization}.
\newblock Ann. Phys.  \textbf{111} (1978), 61--151.

\bibitem {beiser.roemer.waldmann:2007a}
\textsc{Beiser, S., R{\"o}mer, H., Waldmann, S.: }\newblock \emph{Convergence
  of the {W}ick Star Product}.
\newblock Commun. Math. Phys.  \textbf{272} (2007), 25--52.

\bibitem {beiser.waldmann:2014a}
\textsc{Beiser, S., Waldmann, S.: }\newblock \emph{Fr{\'{e}}chet algebraic
  deformation quantization of the Poincar{\'{e}} disk}.
\newblock Crelle's J. reine angew. Math.  \textbf{688} (2014), 147--207.

\bibitem {bertelson.cahen.gutt:1997a}
\textsc{Bertelson, M., Cahen, M., Gutt, S.: }\newblock \emph{Equivalence of
  Star Products}.
\newblock Class. Quant. Grav.  \textbf{14} (1997), A93--A107.

\bibitem{bieliavsky:riemannsurfaces}
\textsc{Bieliavsky, P.} \newblock \emph{Quantum differential surfaces of higher 
genera}.
\newblock ArXiv e-prints (2017).

\bibitem {bieliavsky.gayral}
\textsc{{Bieliavsky}, P., {Gayral}, V.: }\newblock \emph{{Deformation
  Quantization for Actions of Kählerian Lie Groups}}.
\newblock Memoirs of the American Mathematical Society
\textbf{236}.1115 (2015).

\bibitem {bordemann.brischle.emmrich.waldmann:1996a}
\textsc{Bordemann, M., Brischle, M., Emmrich, C., Waldmann, S.: }\newblock
  \emph{Phase Space Reduction for Star Products: An Explicit Construction for
  {$\mathbb C P^n$}}.
\newblock Lett. Math. Phys.  \textbf{36} (1996), 357--371.

\bibitem {bordemann.brischle.emmrich.waldmann:1996b}
\textsc{Bordemann, M., Brischle, M., Emmrich, C., Waldmann, S.: }\newblock
  \emph{Subalgebras with converging star products in deformation quantization:
  An algebraic construction for {$\mathbb C P^n$}}.
\newblock J. Math. Phys.  \textbf{37} (1996), 6311--6323.

\bibitem {bordemann.forger.roemer:1986a}
\textsc{Bordemann, M., Forger, M., R{\"o}mer, H.: }\newblock \emph{Homogeneous
  {K}\"ahler manifolds: paving the way towards new supersymmetric sigma
  models}.
\newblock Commun. Math. Phys.  \textbf{102} (1986), 605--617.

\bibitem {bordemann.waldmann:1997a}
\textsc{Bordemann, M., Waldmann, S.: }\newblock \emph{A Fedosov Star Product of
  Wick Type for K{\"{a}}hler Manifolds}.
\newblock Lett. Math. Phys.  \textbf{41} (1997), 243--253.

\bibitem {cahen.gutt.rawnsley:1990a}
\textsc{Cahen, M., Gutt, S., Rawnsley, J.: }\newblock \emph{Quantization of
  K{\"{a}}hler Manifolds I: Geometric Interpretation of Berezin's
  Quantization}.
\newblock J. Geom. Phys.  \textbf{7} (1990), 45--62.

\bibitem {cahen.gutt.rawnsley:1993a}
\textsc{Cahen, M., Gutt, S., Rawnsley, J.: }\newblock \emph{Quantization of
  K{\"{a}}hler Manifolds. II}.
\newblock Trans. Am. Math. Soc.  \textbf{337}.1 (1993), 73--98.

\bibitem {cahen.gutt.rawnsley:1994a}
\textsc{Cahen, M., Gutt, S., Rawnsley, J.: }\newblock \emph{Quantization of
  K{\"{a}}hler Manifolds. III}.
\newblock Lett. Math. Phys.  \textbf{30} (1994), 291--305.

\bibitem {cahen.gutt.rawnsley:1995a}
\textsc{Cahen, M., Gutt, S., Rawnsley, J.: }\newblock \emph{Quantization of
  K{\"{a}}hler Manifolds. IV}.
\newblock Lett. Math. Phys.  \textbf{34} (1995), 159--168.

\bibitem {calaque}
\textsc{{Calaque}, D., {N{\"a}f}, F.: }\newblock \emph{{A trace formula for the
  quantization of coadjoint orbits}}.
\newblock ArXiv e-prints   (mar 2014).

\bibitem {dewilde.lecomte:1983b}
\textsc{DeWilde, M., Lecomte, P. B.~A.: }\newblock \emph{Existence of
  Star-Products and of Formal Deformations of the Poisson Lie Algebra of
  Arbitrary Symplectic Manifolds}.
\newblock Lett. Math. Phys.  \textbf{7} (1983), 487--496.

\bibitem {esposito.stapor.waldmann:2017a}
\textsc{Esposito, C., Stapor, P., Waldmann, S.}
\newblock \emph{Convergence of the Gutt Star Product}.
\newblock J. Lie Theory \textbf{27} (2017), 579--622.

\bibitem {fedosov:1994a}
\textsc{Fedosov, B.~V.: }\newblock \emph{A Simple Geometrical Construction of
  Deformation Quantization}.
\newblock J. Diff. Geom.  \textbf{40} (1994), 213--238.

\bibitem {gutt:1983a}
\textsc{Gutt, S.: }\newblock \emph{An Explicit $*$-Product on the Cotangent
  Bundle of a Lie Group}.
\newblock Lett. Math. Phys.  \textbf{7} (1983), 249--258.

\bibitem {karabegov:1996a}
\textsc{Karabegov, A.~V.: }\newblock \emph{Deformation Quantization with
  Separation of Variables on a K{\"{a}}hler Manifold}.
\newblock Commun. Math. Phys.  \textbf{180} (1996), 745--755.

\bibitem {karabegov:1998c}
\textsc{Karabegov, A.~V.: }\newblock \emph{{B}erezin's quantization on flag
  manifolds and spherical modules}.
\newblock Trans. Amer. Math. Soc.  \textbf{350}.4 (1998), 1467--1479.

\bibitem {karabegov:1999a}
\textsc{Karabegov, A.~V.: }\newblock \emph{Pseudo-K\"{a}hler Quantization on
  Flag Manifolds}.
\newblock Commun. Math. Phys.  \textbf{200} (1999), 355--379.

\bibitem {kirillov}
\textsc{{Kirillov}, A.: }\newblock \emph{Unitary representations of nilpotent
  Lie groups}.
\newblock Russian Mathematical Surveys  \textbf{17}.4 (1962), 53.

\bibitem {knapp:lieAlgs}
\textsc{Knapp, A.~W.: }\newblock \emph{Lie Groups Beyond an Introduction}.
\newblock \emph{Progress in Mathematics}.
\newblock Birkhäuser Basel, 2. edition, 2002.

\bibitem {kontsevich:2003a}
\textsc{Kontsevich, M.: }\newblock \emph{Deformation Quantization of {P}oisson
  manifolds}.
\newblock Lett. Math. Phys.  \textbf{66} (2003), 157--216.

\bibitem {shoetz}
\textsc{{Kraus}, D., {Roth}, O., {Sch{\"o}tz}, M., {Waldmann} , S.: }\newblock
  \emph{{A Convergent Star Product on the Poincar\'e Disc }}.
\newblock ArXiv:1803.02763   (2018).

\bibitem {marsden.ratiu:1999a}
\textsc{Marsden, J.~E., Ratiu, T.~S.: }\newblock \emph{Introduction to
  Mechanics and Symmetry}.
\newblock \emph{Texts in applied mathematics} no. \textbf{17}.
\newblock Springer-Verlag, New York, Heidelberg, 1999.

\bibitem {moreno.ortega-navarro:1983b}
\textsc{Moreno, C., Ortega-Navarro, P.: }\newblock \emph{$*$-Products on
  $D^1(\mathbb C)$, $S^2$ and Related Spectral Analysis}.
\newblock Lett. Math. Phys.  \textbf{7} (1983), 181--193.

\bibitem {mueller-bahns.neumaier:2004b}
\textsc{M{\"u}ller-Bahns, M.~F., Neumaier, N.: }\newblock \emph{Invariant Star
  Products of {W}ick Type: Classification and Quantum Momentum Mappings}.
\newblock Lett. Math. Phys.  \textbf{70} (2004), 1--15.

\bibitem {natsume.nest:1999a}
\textsc{Natsume, T., Nest, R.: }\newblock \emph{Topological Approach to Quantum
  Surfaces}.
\newblock Commun. Math. Phys.  \textbf{202} (1999), 65--87.

\bibitem {natsume.nest.peter:2003a}
\textsc{Natsume, T., Nest, R., Peter, I.: }\newblock \emph{Strict Quantizations
  of Symplectic Manifolds}.
\newblock Lett. Math. Phys.  \textbf{66} (2003), 73--89.

\bibitem {nest.tsygan:1995a}
\textsc{Nest, R., Tsygan, B.: }\newblock \emph{Algebraic Index Theorem}.
\newblock Commun. Math. Phys.  \textbf{172} (1995), 223--262.

\bibitem {neumaier:2003a}
\textsc{Neumaier, N.: }\newblock \emph{Universality of Fedosov's Construction
  for Star Products of Wick Type on Pseudo-K{\"a}hler Manifolds}.
\newblock Rep. Math. Phys.  \textbf{52} (2003), 43--80.

\bibitem {rieffel:1993a}
\textsc{Rieffel, M.~A.: }\newblock \emph{Deformation quantization for actions
  of $\mathbb{R}^d$}.
\newblock Mem. Amer. Math. Soc.  \textbf{106}.506 (1993), 93 pages.

\bibitem{rieffel:1998a}
\textsc{Rieffel, M. A.}\newblock \emph{Questions on quantization}.
\newblock Contemporary Mathematics \textbf{228} (1998).

\bibitem {philipp:thesis}
\textsc{{Schmitt}, P.: }\newblock \emph{{Convergent Star Products on Coadjoint
  Orbits}}.
\newblock Master thesis   (2017).

\bibitem {schoetz.waldmann:2018}
\textsc{{Schötz}, M., {Waldmann}, S.: }\newblock \emph{Convergent star
  products for projective limits of Hilbert spaces}.
\newblock Journal of Functional Analysis  \textbf{274}.5 (2018), 1381 -- 1423.

\bibitem {waldmann:2014a}
\textsc{Waldmann, S.: }\newblock \emph{A nuclear Weyl algebra}.
\newblock J. Geom. Phys.  \textbf{81} (2014), 10--46.

\end{thebibliography}
\end{document}